\def\cal{\mathcal}
\newcommand{\bbD}{\mathbb{D}}
\newcommand{\bbH}{\mathbb{H}}
\newcommand{\bbR}{\mathbb{R}}
\newcommand{\bbZ}{\mathbb{Z}}
\def\cA{{\cal A}}    \def\cS{{\cal S}}
  \def\cH{{\cal H}}  
   \def\cO{{\cal O}}
\def\cF{{\cal F}}
\def\t{\widetilde}
\renewcommand{\phi}{\varphi}
\renewcommand{\epsilon}{\varepsilon}
\def\homeo{\mathrm{Homeo}}
\newtheorem{theo}{Theorem}
\newtheorem*{ques*}{Question}
\newtheorem*{prop*}{Proposition}
\newtheorem*{conj*}{Conjecture}
\newtheorem*{theo*}{Theorem}
\newtheorem{coro}{Corollary}[section]
\newtheorem{prop}[coro]{Proposition}
\newtheorem{lemm}[coro]{Lemma}
\newtheorem*{lemm*}{Lemma}
\newtheorem*{coro*}{Corollary}
\def\?{\footnote{?}}
\newcommand{\footnotefred}[1]{}
\newcounter{prblm}
\renewcommand{\theprblm}{\arabic{prblm}}
\newlength{\espaceavantenonce}
\newlength{\espaceapresenonce}
\newenvironment{enonce3*}[1]{
\vskip\espaceavantenonce \noindent \textbf{\textit{#1.---}} }%
{\vskip \espaceapresenonce}
\newcounter{numexo}
\newcounter{numsubexo}
\newcounter{numsubsubexo}
\begin{document} \sloppy

\author{Fr\'ed\'eric Le Roux}
 \title{An index for Brouwer homeomorphisms and homotopy Brouwer theory}
\maketitle


\begin{abstract}
We use the homotopy Brouwer theory of Handel to define a Poincar\'e index between pairs of orbits for an orientation preserving fixed point free homeomorphism of the plane. Furthermore, we prove that this index is almost additive.
\end{abstract}

\tableofcontents

\newpage
\setcounter{section}{-1}
\section{Description of the results}\label{section.results}

\subsection{The Poincar\'e index for foliations}

Let $X$ be a (smooth) vector field on the plane. If a point $x_{0}$ is an isolated zero of $X$, then one can define the \emph{Poincar\'e-Hopf index} of $X$ at $x_{0}$ as the winding number of the vector $X(x)$ when the point $x$ goes once around the singularity $x_{0}$. This number is a conjugacy invariant: if $\Phi$ is a diffeomorphism, then the Poincar\'e-Hopf index of the image vector field $\Phi_{*}X$ at $\Phi(x_{0})$ is equal to the Poincar\'e-Hopf index of $X$ at $x_{0}$. 
\def\echantillons{100}
\begin{figure}[ht]
\begin{center}
\begin{tikzpicture}[scale=0.3]
\tikzstyle{fleche pointille}=[>=latex,->,dashed]
\draw plot [variable=\t,domain=0:360,samples=\echantillons] ({5*cos(\t)},{5*sin(\t)}) ;
\draw plot [variable=\t,domain=0:360,samples=\echantillons] ({10*cos(\t)},{2.5*sin(\t)}) ;
\draw node (0,0) {\footnotesize $\times$} ;
\draw node (0,0) [right] {$x_{0}$} ;
\def\N{30}
\foreach \t in {0,...,\N}  
{\draw [fleche pointille]   ({5*cos(\t*360/\N)},{5*sin(\t*360/\N)}) --    ({10*cos(\t*360/\N)},{2.5*sin(\t*360/\N)})         ;}
\end{tikzpicture}
\end{center}
\caption{Index of a vector field or a homeomorphism along a curve}
\end{figure}
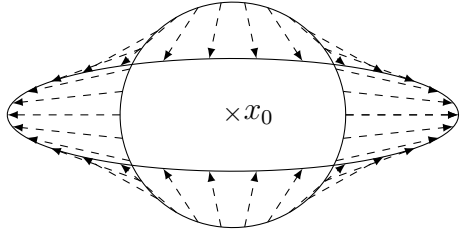
Analogously, if $h$ is a homeomorphism of the plane, and $x_{0}$ is an isolated fixed point of $h$, the \emph{Poincar\'e-Lefschetz index} of $h$ at $x_{0}$ is defined as the winding number of the vector from $x$ to  $h(x)$ when the point $x$ goes once around the singularity $x_{0}$. Again, for every homeomorphism $\Phi$, the Poincar\'e-Lefschetz index of $\Phi h \Phi^{-1}$ at $\Phi(x_{0})$ is equal to the Poincar\'e index of $h$ at $x_{0}$.

Now assume that the planar vector field $X$ has no singularity. Let $\cF$ be the foliation of the plane by trajectories of $X$. According to the Poincar\'e-Bendixson theory, every leaf of $\cF$ is properly embedded in the plane. As a consequence, given two distinct leaves $F_{1}, F_{2}$, one can find an orientation preserving diffeomorphism $\Phi$
that sends $F_{1}, F_{2}$ respectively to the lines $\bbR\times \{1\}, \bbR\times \{2\}$. Consider the vector field $\Phi_{*} X$. On the lines $\Phi(F_{1}), \Phi(F_{2})$ this vector field is horizontal. Thus, given any curve $\gamma$ connecting those two lines, the winding number of $X$ along $\gamma$ is an integer or a half integer. Let us denote this number by $I(\Phi_{*} X,\gamma)$.
An easy connectedness argument shows that $I(\Phi_{*} X,\gamma)$ does not depend on the choice of the curve $\gamma$. We claim that this number does not depend either on the choice of the diffeomorphism $\Phi$. This follows from another connectedness argument; the crucial observation is that the space of changes of coordinates, namely orientation preserving diffeomorphisms that globally preserves both lines $\bbR\times \{1\}, \bbR\times \{2\}$, is connected. In conclusion, the number $I(\Phi_{*} X,\gamma)$ defines an index associated to the foliation $\cF$ and the pair of leaves $(F_{1}, F_{2})$, and this index is again a conjugacy invariant.
This index counts the algebraic number of ``Reeb components'' in the area between the two leaves (see Figure~\ref{fig.reeb-components}).

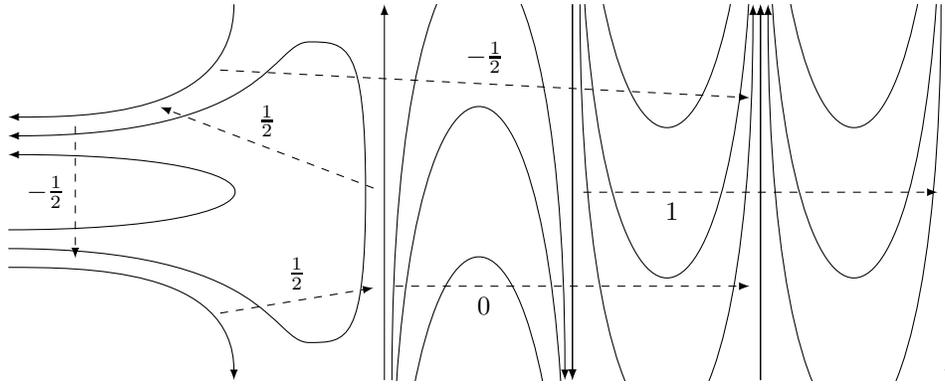
\begin{figure}[h]
\begin{center}
\begin{tikzpicture}[scale=0.5]
\tikzstyle{fleche}=[>=latex,->]
\tikzstyle{fleche pointille}=[>=latex,->,dashed]
\tikzstyle{flecheinverse}=[>=latex,<-]

\begin{scope}
\clip (-10,0) rectangle (0,10) ;
\draw [fleche] (-4,10) .. controls +(down:3) and +(right:2) .. (-10,7)  node [near start,name=debut feuille 1] {} node [midway,name=milieu feuille 1] {} node [near end,name=fin feuille 1] {}  ;
\draw [flecheinverse] (-4,0) .. controls +(up:3) and +(right:2) .. (-10,3)  node [near start,name=fin feuille 2] {}  node [near end,name=debut feuille 2] {}    ;
\draw [fleche] (-10,3.5) .. controls +(right:7)  and +(left:1) .. 
			(-2,1) .. controls +(right:1)  and +(down:4) .. 
			(-0.5,5)  .. controls +(up:4)  and +(right:1) .. 
			(-2,9)   .. controls +(left:1)  and +(right:7) .. 
			(-10,6.5) ; 
\draw [fleche] (-10,4) .. controls +(right:8)  and +(right:8) .. (-10,6) ; 
\end{scope}

\begin{scope}
\clip (-0.5,0) rectangle (5.5,10) ;
\draw [fleche] (0,0) -- (0,10)  node [near end,name=fin feuille 3] {} node [near start,name=debut feuille 3] {}  node [midway,name=milieu feuille 3] {} ; 
\draw [fleche] (5,10) -- (5,0) node [midway,name=milieu feuille 4] {} ;
\def \control {15} 
\foreach \translation in {0, -4, -8}
 	{\draw [fleche] (0.2,\translation) .. controls +(up:\control) and +(up:\control) .. (4.8,\translation) ;}
\end{scope}

\begin{scope}[xshift= 10cm,yshift=10cm,rotate=180]
\clip (-0.5,0) rectangle (5.5,10) ;
\draw [flecheinverse] (0,0) -- (0,10) node [near start,name=fin feuille 5] {} node [near end,name=debut feuille 5] {}  ; 
\draw [flecheinverse] (5,10) -- (5,0) ;
\def \control {15} 
\foreach \translation in {0, -4, -8}
 	{\draw [flecheinverse] (0.2,\translation) .. controls +(up:\control) and +(up:\control) .. (4.8,\translation) ;}
\end{scope}

\begin{scope}[xshift= 15cm,yshift=10cm,rotate=180]
\clip (-0.5,0) rectangle (5.5,10) ;
\draw [fleche] (0,0) -- (0,10)   node [midway,name=milieu feuille 6] {} ;
\draw [fleche] (5,10) -- (5,0) ;
\def \control {15} 
\foreach \translation in {0, -4, -8}
 	{\draw [fleche] (0.2,\translation) .. controls +(up:\control) and +(up:\control) .. (4.8,\translation) ;}
\end{scope}

\draw [fleche pointille]  (debut feuille 1) -- (fin feuille 5)  node [midway,above] {\footnotesize $-\frac{1}{2}$} ;
\draw [fleche pointille]  (fin feuille 1) -- (debut feuille 2)  node [midway,left] {\footnotesize $-\frac{1}{2}$} ;
\draw [fleche pointille]  (fin feuille 2) -- (debut feuille 3)  node [midway,above] {\footnotesize $\frac{1}{2}$} ;
\draw [fleche pointille]  (milieu feuille 3) -- (milieu feuille 1)  node [midway,above] {\footnotesize $\frac{1}{2}$} ;
\draw [fleche pointille]  (debut feuille 3) -- (debut feuille 5)  node [near start,below] {\footnotesize $0$} ;
\draw [fleche pointille]  (milieu feuille 4) -- (milieu feuille 6)  node [near start,below] {\footnotesize $1$} ;

\end{tikzpicture}
\end{center} 
\caption{\label{fig.reeb-components} Examples of indices for pairs of leaves of a particular planar foliation}
\end{figure}

Can we find an analogous definition for homeomorphisms? The discrete counterpart of non vanishing planar vector fields are \emph{Brouwer homeomorphisms}, \emph{i.e.} orientation preserving fixed point free homeomorphims of the plane. There is a profound analogy between the dynamics of Brouwer homeomorphisms and the topology of non singular plane foliations, beginning with the work of Brouwer (\cite{brouwer1912beweis}). In particular, Brouwer proved that every orbit of a Brouwer homeomorphism $h$ is properly embedded, that is, for every $x$, the sequences $(h^{n}(x))_{n \leq 0}$ and $(h^{n}(x))_{n \geq 0}$ tends to infinity\footnote{We call infinity the point added in the Alexandroff one-point compactification of the plane; thus the previous sentence means that every compact subset of the plane contains only finitely many terms of the orbit.}. This is of course reminiscent of the Poincar\'e-Bendixson theorem. The analogy has been developed by several authors, see for example~\cite{homma1953structure,lecalvez2004version,leroux2005structure}.
Note however that, unlike foliations, the dynamics of Brouwer homeomorphisms is rich enough to prevent any attempt for a complete classification (see~\cite{leroux2001classification,beguin2003ensemble}).
The aim of the present paper is to define an index for pairs of orbits of Brouwer homeomorphisms that generalizes the Poincar\'e-Hopf index for pairs of leaves of a plane foliation.
This index is much probably related to the algebraic number of \emph{generalized Reeb components} (as defined in~\cite{leroux2005structure}) separating the two orbits.  But it is unclear to the author how to write a precise definition of the index using generalized Reeb components, whereas homotopy Brouwer theory seems to be the perfect tool.

\subsection{Homotopy Brouwer theory}

In the foliation setting, the key to the definition of the index between pairs of leaves is the existence of a map $\Phi$ that straightens both leaves to euclidean lines, and the connectedness of the space of changes of coordinates. Given a Brouwer homeomorphism $h$ and two full orbits $\cO_{1}, \cO_{2}$ of $h$, it is easy to find a homeomorphism $\Phi$ that sends $\cO_{1}$ to $\bbZ\times \{1\}$ and $\cO_{2}$ to $\bbZ\times \{2\}$. 
For a curve $\gamma$ starting at a point of $\bbZ \times \{1\}$ and ending at a point of $\bbZ \times \{2\}$, we denote by $I(h', \gamma)$ the index of $h'$ along $\gamma$, that is, the winding number of the vector  from $x$ to $h' (x)$ when the point $x$ runs along the curve $\gamma$. Note that $h'$ acts as a horizontal translation on $\bbZ \times \{1\}$ and $\bbZ \times \{2\}$, so that the number $I(h', \gamma)$ is an integer or a half integer. It is tempting to define the index of $h$ between the pair of orbits $\cO_{1}$ and $\cO_{2}$ as the number $I(h', \gamma)$. However, this time the index $I(h', \gamma)$ depends on the choice of  $\Phi$ (and generally also on the choice of $\gamma$, see Figure~\ref{fig.bad-coordinates}). Also note that the corresponding space of changes of coordinates is not connected. Thus one has to give additional conditions on the map $\Phi$ to exclude the ``bad'' maps as the one on Figure~\ref{fig.bad-coordinates}.
These conditions will be provided by Handel's homotopy Brouwer theory. 
\begin{figure}[h]
\begin{center}
\begin{tikzpicture}[scale=1.5]
\tikzstyle{fleche pointille}=[>=latex,->,dashed]

\draw plot [variable=\t,domain=0:360,samples=\echantillons] ({(1.5*cos(\t)+0.5-0.5)*180/(\t+180)+0.5} , {(0.75*sin(\t) + 2 -2 )*180/(\t+180) +2 }) ;
\draw [rotate around={180:(0.5,2)}] plot [variable=\t,domain=0:360,samples=\echantillons] ({(1.5*cos(\t)+0.5-0.5)*180/(\t+180)+0.5} , {(0.75*sin(\t) + 2 -2 )*180/(\t+180) +2 }) ;
\draw (-3,2) -- (-1,2) ;
\draw (0,2) -- (1,2) ;
\draw (2,2) -- (4,2) ;
\draw (-3,1) -- (4,1) ;
\foreach \k in {-3,...,4} 
	{\draw (\k,1) node {\footnotesize $\bullet$} ; \draw (\k,2) node {\footnotesize $\times$} ; }
\draw [fleche pointille]  (0,1) -- (0,2)  node [near start,right] {\footnotesize Index $1$} ;
\draw [fleche pointille]  (2,1) -- (2,2)  node [midway,right] {\footnotesize Index $0$} ;

\end{tikzpicture}
\end{center}
\caption{A bad change of coordinates for the translation $T$: if $\Phi$ is a Dehn twist around the segment $[0,1] \times \{2\}$, the index of $\Phi T \Phi^{-1}$ between $(0,1)$ and  $(0,2)$ is one, whereas the index of $T$ between those same points vanishes}
\label{fig.bad-coordinates}
\end{figure}
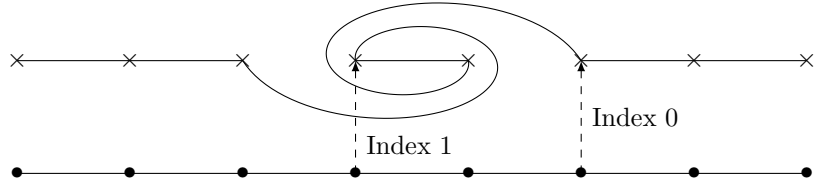

The general setting for homotopy Brouwer theory is the following. 
Let $h$ be an orientation preserving homeomorphism of the plane.
Choose distinct orbits $\cO_{1}, \dots \cO_{r}$ of $h$ and assume that they are proper: for every $x_{i} \in \cO_{i}$,  the sequences $(h^{n}(x_{i}))_{n \leq 0}$ and $(h^{n}(x_{i}))_{n \geq 0}$ tend to infinity (remember that this is automatic if $h$ is a Brouwer homeomorphism). 
Let $\homeo^{+}(\bbR^{2}; \cO_{1}, \dots,  \cO_{r})$ denote the group of orientation preserving homeomorphisms $g$ of the plane that globally preserve each $\cO_{i}$, that is, $g(\cO_{i}) = \cO_{i}$ for each $i$. Let $\homeo^{+}_{0}(\bbR^{2}; \cO_{1}, \dots,  \cO_{r})$ denote the identity component of this group: more explicitly, a homeomorphism $g$ belongs to this subgroup if there exists an isotopy $(g_{t})_{t \in [0,1]}$ such that  $g_{0}$ is the identity and $g_{1} = g$, and for every $t$, $g_{t}$ fixes every point of $\cO = \cup_{i}\cO_{i}$.
We denote the quotient group by
$$
MCG(\bbR^{2}; \cO_{1}, \dots,  \cO_{r}) = \homeo^{+}(\bbR^{2}; \cO_{1}, \dots,  \cO_{r}) \Big{/} \homeo^{+}_{0}(\bbR^{2}; \cO_{1}, \dots,  \cO_{r}).
$$
The homeomorphism $h$ determines an element of this group which is called the \emph{mapping class of $h$ relative to  $\cO$} and denoted by $[h; \cO_{1}, \dots , \cO_{r}]$ (or simply $[h]$ when the set of orbits is clear from the context). Two elements of the same mapping class will be said to be \emph{isotopic relative to $\cO$}.
Two mapping classes $[h; \cO_{1}, \dots , \cO_{r}]$ and $[h'; \cO'_{1}, \dots , \cO'_{r}]$ are \emph{conjugate} if there exists an orientation preserving homeomorphism $\Phi$ such that $\Phi( \cO_{i}) = \cO'_{i}$ for each $i$, and the mapping classes
$[\Phi h \Phi^{-1}; \cO'_{1}, \dots , \cO'_{r}]$ and $[h'; \cO'_{1}, \dots , \cO'_{r}]$ are equal. The mapping class of a Brouwer homeomorphism is  called a \emph{Brouwer mapping class}. One of the purposes of homotopy Brouwer  theory is to give a description of Brouwer mapping classes up to conjugacy. The definition of the index, given in the next subsection, will necessitate the classification relative to \emph{two} orbits, which has been provided by Handel in~\cite{handel1999fixed}. In subsection~\ref{subsection.additivity} we will state a quasi-additivity property for the index.  The classification relative to \emph{three} orbits will be the key to prove this property, we will describe it in subsection~\ref{subsection.three}.

\subsection{Definition of the index}\label{subsection.index}
\begin{figure}[h]
\begin{center}
\begin{tikzpicture}[scale=1.5]
\tikzstyle{fleche}=[>=latex,->]
\draw (-3,1) -- (4,1) ;
\draw (-3,2) -- (4,2) ;
\foreach \k in {-3,...,4} 
	{\draw (\k,1) node {$\bullet$} ; \draw (\k,2) node {$\times$} ; }
\def\depl{0.2}
\draw [fleche,color=blue,dashed] (-2,2+\depl) -- (-1, 2+\depl) node [midway,above] {$T$} ;
\draw [fleche,color=blue,dashed] (-2,1+\depl) -- (-1, 1+\depl) node [midway,above] {$T$} ;

\draw [fleche,color=blue,dashed] (3,2+\depl) -- (2, 2+\depl) node [midway,above] {$R$} ;
\draw [fleche,color=blue,dashed] (2,1+\depl) -- (3, 1+\depl) node [midway,above] {$R$} ;

\end{tikzpicture}
\end{center}
\caption{The orbits $\bbZ \times \{1\}$ and $\bbZ \times \{2\}$, $T$ and $R$}
\label{fig.translation-reeb}
\end{figure}
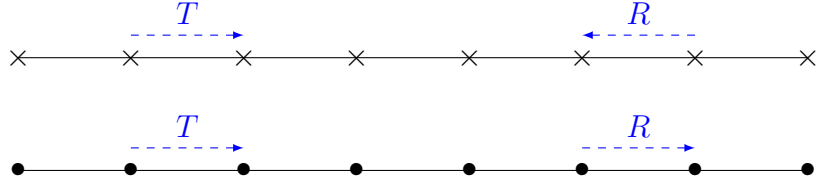

 Let $T$ be the translation by one unit to the right, and $R$ be the ``Reeb homeomorphism'' which is partially described on Figure~\ref{fig.translation-reeb}.
The only relevant features for our purposes are the actions of $T$ and $R$ on the lines $\bbR \times \{1\},\bbR \times \{2\}$. We denote by $[T], [T^{-1}], [R], [R^{-1}]$ the mapping classes relative to the orbits $\bbZ \times \{1\}$ and $\bbZ \times \{2\}$.
We can now state the classification of Brouwer mapping classes relative to two orbits\footnote{Note that the context is slightly different from~\cite{handel1999fixed}, because  (unlike Handel) we restrict ourseves to conjugacy under \emph{orientation preserving} homeomorphisms that globally preserve \emph{each} orbit. The reader may easily deduce the given statement from Handel's one, or refer to section~\ref{subsection.explicit} below. Furthermore, the mapping classes $[T]$ and $[T^{-1}]$ are actually conjugate, as we will see in proposition~\ref{prop.centralisateur2}, but we do not need this fact for the moment.}.
\begin{theo*}[Handel,~\cite{handel1999fixed}, Theorem 2.6]
Every Brouwer mapping class  relative to two orbits is conjugate to $[T]$, $[T^{-1}]$, $[R]$ or $[R^{-1}]$.
\end{theo*}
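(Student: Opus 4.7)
The plan is to normalize the two orbits simultaneously to the standard position $\bbZ \times \{1, 2\}$, and then to read off the mapping class of $h$ from the induced translation signs on the two orbits.

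For the normalization step, first I would construct disjoint $h$-invariant properly embedded topological lines $\ell_{1}, \ell_{2}$ with $\cO_{i} \subset \ell_{i}$. To do this, I would pick arcs $\alpha_{i}$ from $x_{i}$ to $h(x_{i})$, disjoint for $i \neq j$, and use Brouwer's translation arc lemma together with the properness of the orbits to arrange (after a preliminary isotopy rel $\cO_{1} \cup \cO_{2}$) that $\alpha_{i} \cap h^{n}(\alpha_{i}) = \emptyset$ for all $n \neq 0$; then $\ell_{i} = \bigcup_{n \in \bbZ} h^{n}(\alpha_{i})$ is a properly embedded line containing $\cO_{i}$. A further isotopy rel $\cO_{1} \cup \cO_{2}$ achieves $h(\ell_{i}) = \ell_{i}$ exactly, with $h|_{\ell_{i}}$ conjugate to a translation. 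An orientation-preserving homeomorphism $\Phi$ of $\bbR^{2}$ then sends $\ell_{i}$ to $\bbR \times \{i\}$ and the orbit points to $\bbZ \times \{i\}$; the conjugate $h' = \Phi h \Phi^{-1}$ acts on $\bbZ \times \{i\}$ as $k \mapsto k + \epsilon_{i}$ for some sign $\epsilon_{i} \in \{\pm 1\}$.

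A direct inspection of the four models shows that each sign pair is realized: $(+, +)$ by $T$, $(-, -)$ by $T^{-1}$, $(+, -)$ by $R$, and $(-, +)$ by $R^{-1}$. The theorem therefore reduces to proving a uniqueness statement: any Brouwer mapping class rel $\bbZ \times \{1, 2\}$ with a given sign pair is conjugate, by an orientation-preserving plane homeomorphism preserving each orbit setwise, to the corresponding model.

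The main obstacle is precisely this uniqueness step, and it is where Handel's homotopy Brouwer theory bites. To handle it I would track a reference transverse arc $\gamma$ from $(0, 1)$ to $(0, 2)$ and compare the isotopy class of $h'(\gamma)$ rel $\bbZ \times \{1, 2\}$ with that of the corresponding model arc. On each of the two exterior half-planes bounded by $\bbR \times \{i\}$, only one orbit lies on the boundary and the mapping class is easily seen to be determined by the translation sign there. Inside the strip $\bbR \times [1, 2]$, a priori a Dehn twist around a loop enclosing finitely many orbit points, or a more exotic twist at infinity, could alter $[h'(\gamma)]$; the content of the theorem is that the fixed-point-freeness of $h$ forbids such modifications, because they would introduce either a fixed point or a non-proper orbit inside the strip. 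Handel implements this via fitted families of arcs and a Nielsen-type analysis in the relative mapping class group of the plane with the orbit set removed; the analogous statement for nonsingular plane foliations is by contrast automatic from the connectedness of the space of straightening coordinates, which underscores that the discrete version is a genuinely nontrivial result.
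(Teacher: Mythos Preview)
Your proposal has the logic backwards. The step you treat as routine---constructing disjoint $h$-invariant proper lines $\ell_1, \ell_2$ containing the two orbits---is the entire content of the theorem, while the ``uniqueness step'' you flag as the main obstacle is trivial once such lines exist: by Alexander's trick (Lemma~\ref{lemma.Alexander}), any $h'$ preserving $\bbR\times\{1,2\}$ and acting there with signs $(\epsilon_1,\epsilon_2)$ is already isotopic rel those lines, hence rel $\bbZ\times\{1,2\}$, to the corresponding model. The Dehn-twist ambiguity you worry about only arises when one picks coordinates \emph{without} first having invariant lines (compare Figure~\ref{fig.bad-coordinates}).

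The genuine gap is in your first paragraph. Brouwer's classical translation arc lemma produces, for each $x_i$ separately, an arc $\alpha_i$ whose $h$-iterates are pairwise disjoint; but it says nothing about $\alpha_1$ being a homotopy translation arc relative to $\cO_1\cup\cO_2$, nothing about $\ell_1\cap\ell_2$, and your ``preliminary isotopy rel $\cO_1\cup\cO_2$'' will in general destroy the translation-arc property you just obtained for the original representative. Producing two disjoint proper homotopy streamlines is, by Lemma~\ref{lem.flow-streamlines}, equivalent to $[h;\cO_1,\cO_2]$ being a fixed-point-free flow class, which is exactly the statement to be proved. The paper's route (the two-orbit case of Theorem~\ref{theo.3}) does this with Handel's machinery: Proposition~\ref{prop.half-proper-streamlines} supplies alternating backward and forward half-proper streamlines, Proposition~\ref{prop.reducing-line} a reducing line separating them, and then the inductive uniqueness argument of Lemma~\ref{lemm.uniqueness} forces each backward-proper arc to be forward-proper as well, yielding the two full disjoint streamlines. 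That argument, not the final normalization, is where the Brouwer hypothesis does its work.
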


\begin{figure}[h]
\begin{center}
\begin{tikzpicture}[scale=0.8]
\tikzstyle{fleche}=[>=latex,->]

\begin{scope}[xshift=-120]
\coordinate (A1) at (0,3);
\coordinate (A2) at (-2,-1);
\coordinate (A3) at (0,0);
\coordinate (A4) at (2,0);
\coordinate (A5) at (3,-3);

\coordinate (B1) at (-3,-3);
\coordinate (B2) at (-1,1);
\coordinate (B3) at (1,0);
\coordinate (B4) at (0,-2);
\coordinate (B5) at (2,-4);

\foreach \p in {1,...,5}
{\node at (A\p)  {$\bullet$} ;
\node at (A\p) [below]  {$\p$} ;
\node at (B\p)  {$\times$} ;
\node at (B\p) [above]  {$\p$} ;
}

\end{scope}

\draw (0,-3) -- (0,3) ;

\begin{scope}[xshift=120]
\coordinate (A1) at (0,3);
\coordinate (A2) at (-2,-1);
\coordinate (A3) at (0,0);
\coordinate (A4) at (2,0);
\coordinate (A5) at (3,-3);

\coordinate (B1) at (-3,-3);
\coordinate (B2) at (-1,1);
\coordinate (B3) at (1,0);
\coordinate (B4) at (0,-2);
\coordinate (B5) at (2,-4);

\foreach \p in {1,...,5}
{\node at (A\p)  {$\bullet$} ;
\node at (A\p) [below]  {$\p$} ;
\node at (B\p)  {$\times$} ;
\node at (B\p) [above]  {$\p$} ;
}

\draw (A1) .. controls +(left:2) and +(up:2) .. (A2) ;
\draw (A2)  .. controls +(up:3) and +(up:3) .. (A3) node [pos=0.6,name=C1] {} ;
\draw (A3)  .. controls +(up:1) and +(up:1) .. (A4) ;
\draw (A4)  .. controls +(right:0.5) and +(up:1) ..  (A5) ;

\draw (B1) .. controls +(right:1) and +(down:2) .. (B2) ;
\draw (B2)  .. controls +(down:3) and +(down:2) .. (B3) ;
\draw (B3)  .. controls +(down:1) and +(right:1) .. (B4) ;
\draw (B4)   .. controls +(down:0.5) and +(left:0.5) .. (B5) ;

\draw [dotted,thick] (A3)  .. controls +(1,2) and +(2,2) .. (A4) node [midway,name=C2] {} ;
\draw [color=blue,fleche,thick,dashed]  (C1) .. controls +(1,0.5) and +(-1,0.5) .. (C2) node [near end,above] {$h$} ;

\end{scope}

\end{tikzpicture}
\end{center}

\caption{Given two orbits of a Brouwer homeomorphism, Handel's theorem provides a way to connect the points in each orbit by a \emph{homotopy streamline} $\Phi^{-1}(\bbR \times \{1\}), \Phi^{-1}(\bbR \times \{2\})$ which is isotopic to its image (see section~\ref{subsection.basic} below)}
\end{figure}
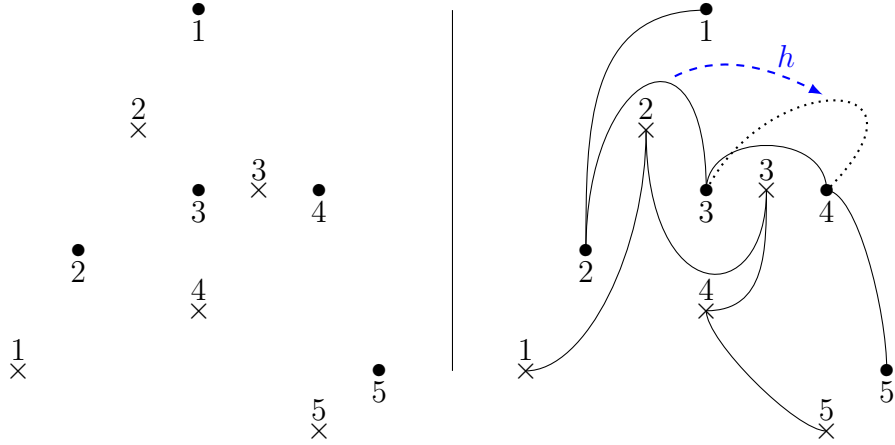

Now let $h$ be a Brouwer homeomorphism, and $\cO_{1}, \cO_{2}$ be two orbits of $h$. Handel's theorem provides an orientation preserving homeomorphism $\Phi$  that sends $\cO_{1}, \cO_{2}$ respectively onto $\bbZ \times \{1\},\bbZ \times \{2\}$, and such that the mapping class of $h' = \Phi h \Phi^{-1}$ relative to these orbits 
is equal either to $[T]$, $[T^{-1}]$, $[R]$ or $[R^{-1}]$. Let  $\gamma$ be some curve starting at a point of $\bbZ \times \{1\}$ and ending at a point of $\bbZ \times \{2\}$.
\begin{theo}\label{theo.1}
The number $I(\Phi h \Phi^{-1},\gamma)$ does not depend on the choice of  the curve $\gamma$ nor of the map $\Phi$.
\end{theo}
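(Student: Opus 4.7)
The plan is to prove three successive independence statements: for a fixed $\Phi$, that the index does not depend on the curve $\gamma$ with given endpoints; for a fixed $\Phi$, that it does not depend on the choice of endpoints in $\bbZ\times\{1\}\cup\bbZ\times\{2\}$; and finally that the index does not depend on the choice of $\Phi$.

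Set $h':=\Phi h \Phi^{-1}$; since $h$ is Brouwer, so is $h'$, in particular fixed point free. Hence the displacement vector field $V(x):=h'(x)-x$ is nowhere vanishing on the simply connected plane $\bbR^{2}$, so the map $V/\|V\|:\bbR^{2}\to S^{1}$ admits a continuous lift $\theta:\bbR^{2}\to\bbR$, unique up to an additive constant, and for any curve $\gamma$ from $p$ to $q$,
\[
I(h',\gamma)=\frac{\theta(q)-\theta(p)}{2\pi}.
\]
This yields the first independence and reduces the second to the claim that $\theta$ is constant on each of $\bbZ\times\{1\}$ and $\bbZ\times\{2\}$.

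For the second statement, I would invoke Handel's classification: $[h']\in\{[T],[T^{-1}],[R],[R^{-1}]\}$, and for the four canonical representatives, the displacement is literally horizontal with constant direction along each line $\bbR\times\{i\}$, so $\theta$ is manifestly constant on each orbit. To transfer this to $h'$ itself, I would use the isotopy $(h'_{t})_{t\in[0,1]}$ from the canonical representative to $h'$ provided by the equality of mapping classes, with each $h'_{t}$ agreeing with the canonical representative on $\cO$. Choosing a continuous family of lifts $\theta_{t}$, the restriction $\theta_{t}|_{\cO}$ takes values in the discrete set $\pi\ZZ$ throughout, so by continuity it is constant in $t$, equal to the canonical value, and the claim follows. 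This argument in fact shows that the index depends only on the mapping class of $h'$. The third statement then follows from Handel's uniqueness: two admissible $\Phi_{1},\Phi_{2}$ produce homeomorphisms $h'_{1},h'_{2}$ in conjugate mapping classes among the four, and the conjugacies $[T]\sim[T^{-1}]$ and $[R]\sim[R^{-1}]$ (noted in the paper) together with the endpoint independence --- applied to the identity $I(R^{-1},\gamma)=I(R,R^{-1}(\gamma))$ and its analogues --- imply the four canonical values collapse to two, matching whenever the four classes are conjugate in pairs.

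The main obstacle lies in the isotopy step of the second independence: the intermediate $h'_{t}$ need not be fixed point free, so $V_{t}$ may vanish somewhere in the plane and the global lift $\theta_{t}$ may fail to exist for some $t$. Overcoming this will likely require restricting the argument to a neighbourhood of $\cO$ (or of an explicit arc joining two consecutive points of one orbit) where $V_{t}$ can be kept nonvanishing throughout the isotopy, exploiting the explicit streamline form of representatives in each Brouwer mapping class coming from Handel's homotopy Brouwer theory.
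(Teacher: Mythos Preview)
Your first step is fine, and the \emph{narrow} goal of step 2 (endpoint-independence) is true --- it is Lemma~\ref{lemma.curve} in the paper. But your argument for it does not work, and the stronger conclusion you draw from it (``the index depends only on the mapping class of $h'$'') is \emph{false}: the paper states explicitly just after the theorem that the index is not an isotopy invariant and may take any half-integer value, while there are only four mapping classes. A Brouwer $h'$ with $[h']=[R]$ can perfectly well have $I(h',\gamma)=5/2$, whereas $I(R,\gamma)=\pm 1/2$. So the obstacle you flag --- fixed points appearing along the isotopy $(h'_{t})$ --- is not a technicality to be patched locally but an essential obstruction: such an isotopy \emph{must} develop fixed points in general, and the index genuinely changes across them. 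Since your step 3 rests on this false mapping-class invariance, it collapses as well. (There is also a factual slip in step 3: $[R]$ and $[R^{-1}]$ are \emph{not} conjugate in $MCG(\bbR^{2};\bbZ_{1},\bbZ_{2})$; see Proposition~\ref{prop.centralisateur2}.)

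The paper's route is to isotope the \emph{conjugating map} rather than $h'$. Given two admissible $\Phi_{1},\Phi_{2}$, set $\Psi=\Phi_{2}\Phi_{1}^{-1}$ and $h_{1}=\Phi_{1} h\Phi_{1}^{-1}$, so that $h_{2}=\Psi h_{1}\Psi^{-1}$. Along any isotopy $(\Psi_{t})$ relative to $\bbZ_{1}\cup\bbZ_{2}$, each $\Psi_{t} h_{1}\Psi_{t}^{-1}$ is a conjugate of the Brouwer homeomorphism $h_{1}$, hence automatically fixed-point free, and the index stays constant. The work is then to find a good endpoint $\Psi'$ for this isotopy: Proposition~\ref{prop.centralisateur2} computes the centralizers of $[R]$ and $[T]$ and shows that $\Psi$ is isotopic rel $\bbZ_{1}\cup\bbZ_{2}$ to some $\Psi'$ commuting with $T$ (modulo one explicit extra conjugacy in the $[T]\leftrightarrow[T^{-1}]$ sub-case). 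Conjugation by maps commuting with $T$ preserves the index because that space of maps is arcwise connected (Lemma~\ref{lemm.index}). The endpoint-independence itself (Lemma~\ref{lemma.curve}) is proved by a different mechanism: a translation arc from classical Brouwer theory, straightened via the uniqueness of homotopy translation arcs relative to a single orbit.
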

Theorem~\ref{theo.1} will be proved in section~\ref{section.index}. We denote this number by  $I(h,\cO_{1}, \cO_{2})$ and call it the \emph{Poincar\'e index of $h$ between the orbits $\cO_{1}$ and $\cO_{2}$}.
As a consequence, this index is a conjugacy invariant: for every orientation preserving homeomorphism $\Psi$,
$$
I(\Psi h\Psi^{-1},\Psi \cO_{1}, \Psi \cO_{2}) = I(h,\cO_{1}, \cO_{2}).
$$
When the homeomorphism $h$ pushes points along a foliation $\cF$, it is not difficult to see that this index coincides with the index of the foliation between the leaves containing the orbits $\cO_{1}$ and $\cO_{2}$. To avoid confusion, we insist that the index is \emph{not} an isotopy invariant. In particular it may take any integer or half-integer value, whereas according to Handel's theorem there are only finitely many isotopy classes. 

\subsection{Quasi-additivity}\label{subsection.additivity}
The index of a non vanishing vector field along a curve is additive, in the sense that the index along the concatenation of two curves is the sum of the indices along each of the curves. We will prove that our index satisfies a slightly weaker property.
\begin{theo}\label{theo.2}
Let $h$ be a Brouwer homeomorphism, and $\cO_{1}, \cO_{2}, \cO_{3}$ be three orbits of $h$. Then the following quasi-additivity relation holds:
$$
\left| I(h,\cO_{1}, \cO_{2}) + I(h,\cO_{2}, \cO_{3}) + I(h,\cO_{3}, \cO_{1}) \right| \leq \frac{1}{2}.
$$ 
\end{theo}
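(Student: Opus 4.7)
The plan is to reduce to a finite list of models by means of the classification of Brouwer mapping classes relative to three orbits announced in subsection~\ref{subsection.three}. Since the pair-wise Poincar\'e index is a conjugacy invariant, we may replace $h$ by $h' := \Phi h \Phi^{-1}$, where $\Phi$ is orientation preserving, $\Phi(\cO_i) = \bbZ\times\{i\}$, and the triple mapping class $[h'; \bbZ\times\{1\}, \bbZ\times\{2\}, \bbZ\times\{3\}]$ is in normal form. It is then enough to verify the inequality for each model $h'$.

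The geometric heart of the argument is a \emph{closed triangle} identity. Pick a vertex $p_i \in \bbZ\times\{i\}$ for each $i$ and simple arcs $\gamma_{ij}$ from $p_i$ to $p_j$ arranged so that the concatenation $\gamma := \gamma_{12}\cdot \gamma_{23}\cdot \gamma_{31}$ is a simple closed curve bounding a topological disc. Because $h'$ is fixed point free, the displacement vector field $V(x) = h'(x) - x$ has no zero on this disc, so its winding number along $\gamma$ vanishes. Additivity of winding numbers under concatenation of arcs then yields
$$
I(h',\gamma_{12}) + I(h',\gamma_{23}) + I(h',\gamma_{31}) \;=\; 0.
$$

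The main obstacle is that $\Phi$ is tuned to normalise the \emph{triple} mapping class and is generally \emph{not} a valid choice in the sense of Theorem~\ref{theo.1} for any single pair: the pair mapping class of $h'$ relative to $(\bbZ\times\{i\},\bbZ\times\{j\})$ need not lie in $\{[T],[T^{-1}],[R],[R^{-1}]\}$, only be conjugate to one of these in the pair mapping class group. Hence $I(h',\gamma_{ij})$ need not agree with the honest pair-wise index $I(h,\cO_i,\cO_j)$: to compute the latter one must further conjugate $h'$ by a pair-preserving homeomorphism $\Psi_{ij}$ that puts the pair class into canonical form, and $\Psi_{ij}$ is determined, up to isotopy rel orbits, by a Dehn-twist-type twisting of the third orbit ``seen from'' the pair $(\cO_i,\cO_j)$. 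The final step of the proof is to read off these twists from the finite list of three-orbit normal forms of subsection~\ref{subsection.three} and to check, case by case, that the three discrepancies $I(h,\cO_i,\cO_j) - I(h',\gamma_{ij})$ sum to an element of $[-\tfrac12,\tfrac12]$. Combined with the closed triangle identity, this gives the quasi-additivity bound of Theorem~\ref{theo.2}, and this case analysis is the step I expect to carry the bulk of the technical work.
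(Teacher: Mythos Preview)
Your high-level architecture matches the paper's: reduce via Theorem~\ref{theo.3}, use that the displacement vector field has zero winding number around a closed triangle, and then control the discrepancy between each ``raw'' arc index $I(h',\gamma_{ij})$ and the genuine pair index $I(h,\cO_i,\cO_j)$. Where you diverge from the paper, and where your outline has a real gap, is in the nature of that discrepancy.

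The paper does not conjugate to a fixed three-orbit model with orbits on $\bbZ\times\{i\}$. Instead it uses Theorem~\ref{theo.3} only through Lemma~\ref{lem.flow-streamlines}: there exist three pairwise disjoint proper homotopy streamlines $\Gamma_1,\Gamma_2,\Gamma_3$ for $[h;\cO_1,\cO_2,\cO_3]$. The key observation, which dissolves your ``main obstacle'', is that for each pair $(i,j)$ the two lines $\Gamma_i,\Gamma_j$ are automatically proper homotopy streamlines for the \emph{pair} class $[h;\cO_i,\cO_j]$, so the Schoenflies--Homma map sending them to parallel straight lines already puts that pair class into $\{[T],[T^{-1}],[R],[R^{-1}]\}$ (first paragraph of section~\ref{section.additivity}). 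No extra $\Psi_{ij}$ is needed. The discrepancy $I(h,\cO_i,\cO_j)-I(h,\alpha)$ is then given by an explicit homotopy formula, $a_1-a_0$, where $a_s$ is the angular variation of the displacement vector at the endpoint $\alpha(s)$ along the straightening isotopy $(\Phi_t)$.

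Your description of the correction as ``Dehn-twist-type'' is the concrete error: the case analysis in the paper is on the topology of three disjoint lines (one separates the other two, or none does), not on the five-model list. In the separating case all corrections vanish and one gets exact additivity. In the non-separating case (Figure~\ref{figure.trois-trajectoires}) each correction equals $+\tfrac{1}{6}$, or all $-\tfrac{1}{6}$ in the mirror image, summing to $\pm\tfrac{1}{2}$. The value $\tfrac{1}{6}$ is not a half-integer, so these corrections cannot come from Dehn twists; they are the $60^{\circ}$ rotations needed to straighten each pair of $\Gamma$'s to parallel lines. If you try to execute your plan literally, looking for integer twist numbers, you will not find the right corrections.
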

Note that this bound is optimal, in particular additivity does not hold: an example is provided by the sum of the three leftmost indices on Figure~\ref{fig.reeb-components}.
Theorem~\ref{theo.2} will be proved in section~\ref{section.additivity}.

The index for couples of leaves of a foliation satisfies the same property. The proof is much easier, and constitutes an interesting introductory exercise for the proof of Theorem~\ref{theo.2}.

\subsection{Brouwer classes relative to three orbits}\label{subsection.three}
The quasi-additivity property essentially comes from the description of the mapping classes of Brouwer homeomorphisms relative to three orbits, which we explain now.
A \emph{flow} is a continuous family $(h_{t})_{t \in \bbR}$ of homeomorphisms of the plane satisfying the composition law $h_{t+s} = h_{t} \circ h_{s}$ for every $t,s \in \bbR$.
  A homeomorphism $h$ is said to be the \emph{time one map of a flow} if there exists a flow $(h_{t})_{t \in \bbR}$  such that $h_{1}=h$. A mapping class is a \emph{flow class} if it contains the time one map of a flow. If it contains a homeomorphism which is both a time one map and fixed point free, the we will say that it is a \emph{fixed point free flow class}.
  The following is an extension of Handel's theorem.
 \begin{theo}\label{theo.3}
Every Brouwer mapping class relative to one, two or three orbits is a fixed point free flow class. 
\end{theo}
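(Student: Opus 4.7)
The proof splits into the three cases $r=1,2,3$, each reducing to exhibiting a fixed-point-free flow whose time-one map realizes the mapping class.

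For $r=1$, a single proper orbit of an orientation-preserving homeomorphism can be carried by a plane homeomorphism onto $\bbZ\times\{0\}$, and the induced mapping class is that of the horizontal translation $T$; since $T$ is the time-one map of the translation flow $(x,y,t)\mapsto(x+t,y)$, which has no fixed point, the assertion holds. For $r=2$, Handel's theorem recalled above reduces us to the four classes $[T]$, $[T^{-1}]$, $[R]$, $[R^{-1}]$: the first two embed in the translation flow (and its time reversal), while $[R^{\pm 1}]$ embeds in a Reeb flow, obtained from an explicit non-vanishing vector field on $\bbR^2$ whose foliation has a Reeb component between the lines $\bbR\times\{1\}$ and $\bbR\times\{2\}$.

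For $r=3$, the plan is first to establish, in parallel with Handel's two-orbit theorem, a finite classification of Brouwer mapping classes relative to three orbits up to conjugacy, and then to check that each representative in this classification is the time-one map of a non-vanishing flow. Given three orbits $\cO_1,\cO_2,\cO_3$, the three pairwise restrictions yield elements of $\{[T^{\pm 1}],[R^{\pm 1}]\}$, and one expects that these pairwise classes together with the cyclic order of the three orbits at infinity form a complete set of conjugacy invariants. For each consistent configuration, an explicit model is built by cutting $\bbR^2$ along a system of Brouwer lines adapted to the cyclic order and populating the resulting regions with translation flows and Reeb strips so that the prescribed pairwise data are realized; the resulting vector field is non-vanishing, its flow is fixed-point free, and its time-one map lies in the desired mapping class.

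The main obstacle is the classification step: proving that the pairwise data plus the cyclic order really determine the relative mapping class. The approach is to apply Handel's homotopy Brouwer machinery to each pair $(\cO_i,\cO_j)$, obtain a reducing line $\ell_{ij}$ separating the two orbits in the isotopy class dictated by the pair's type, and show that the triple $(\ell_{12},\ell_{23},\ell_{31})$ can be put into a standard position; cutting along these lines then expresses the three-orbit class as a gluing of two-orbit classes already controlled by Handel's theorem. Once the model list is established, checking the flow property for each model is essentially routine, as the time-one map of the glued non-singular field realizes the mapping class by construction.
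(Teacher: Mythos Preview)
Your treatment of $r=1,2$ is fine. For $r=3$, however, the proposal has a genuine gap in the classification step, and the overall strategy is inverted relative to what actually works.

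The concrete problem is with your reducing lines $\ell_{ij}$. A reducing line for the pair $(\cO_i,\cO_j)$ is a topological line in $\bbR^2\setminus(\cO_i\cup\cO_j)$ that is properly isotopic to its image \emph{relative to $\cO_i\cup\cO_j$}. There is no control over how such a line interacts with the third orbit $\cO_k$: it may cross $\cO_k$, and even if it doesn't, it need not be isotopic to its image relative to all three orbits. So the triple $(\ell_{12},\ell_{23},\ell_{31})$ carries no obvious meaning in the three-orbit mapping class group, and the assertion that it ``can be put into a standard position'' is exactly the difficult content you are trying to prove. Likewise, the claim that pairwise types together with a cyclic order form a complete conjugacy invariant is not justified; this is essentially the classification itself, not a route to it.

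The paper's argument runs in the opposite direction: it first proves Theorem~\ref{theo.3} directly, and only afterwards deduces the explicit list of five conjugacy classes as a corollary. The engine is Lemma~\ref{lem.flow-streamlines}, which says that a Brouwer mapping class is a fixed-point-free flow class if and only if it admits a family of pairwise disjoint proper homotopy streamlines covering the orbits. One then invokes Handel's deeper structure theory (Proposition~\ref{prop.half-proper-streamlines}) to obtain $2r'$ alternating backward/forward proper half-streamlines, and splits into cases according to $r'$: the translation case $r'=1$, the alternating case $r'=r$ (handled by an induction using uniqueness of homotopy translation arcs and a reducing line for the \emph{full} three-orbit class from Proposition~\ref{prop.reducing-line}), and a remaining case $r=3$, $r'=2$ with two sub-cases. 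The reducing lines used are always reducing for $[h;\cO_1,\cO_2,\cO_3]$, not for pairs, and their existence is a nontrivial input from Handel's fitted-family machinery rather than something one can read off from the two-orbit theorem.
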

Theorem~\ref{theo.3} will be proved in section~\ref{section.three}. We will provide a more explicit formulation in section~\ref{subsection.explicit}; the formulation in terms of flow classes is just convenient to express Theorem~\ref{theo.3} in a short way.
The number three is optimal: there exist Brouwer homotopy classes relative to four orbits that are not flow classes (see~\cite{handel1999fixed}, example 2.9 for five orbits and~\cite{leroux2012introduction}, exercise 5 for four orbits).

\subsection{Aknowledgements}
This paper relies heavily on the work of Michael Handel, especially~\cite{handel1999fixed}. The author is also grateful to Michael Handel for several illuminating conversations, both electronically and during the conference on Topological Methods in Dynamical Systems held in Campinas, Brasil, in 2011. The author also whishes to thank Lucien Guillou for discussions concerning the classification relative to three orbits, and the anonymous referee for his careful reading and his suggestions that have helped clarifying the paper.

\section{The index}\label{section.index}

\subsection{Proof of the invariance}\label{subsection.proof-invariance}
Theorem~\ref{theo.1} claims that a certain number does not depend on the choice of a curve $\gamma$ nor of a map $\Phi$. For clarity we state the independence on the curve in a separate lemma.
We denote $\bbZ\times \{1\}$ and $\bbZ\times \{2\} $ respectively by $\bbZ_{1}$ and $\bbZ_{2}$.

\begin{lemm}\label{lemma.curve}
Let $h$ be a Brouwer homeomorphism which globally preserves $\bbZ_{1}$ and $\bbZ_{2}$ and is isotopic, relative to these sets, to one of the four maps $T,T^{-1}, R, R^{-1}$. Then the index of $h$ along a curve joining a point of $\bbZ_{1}$ to a point of $\bbZ_{2}$ does not depend on the choice of the curve.
\end{lemm}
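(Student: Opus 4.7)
The plan is to prove the lemma in two stages. Stage one shows that $I(h,\gamma)$ only depends on the endpoints of $\gamma$; stage two shows that it is actually independent of those endpoints (within $\bbZ_{1}$ and $\bbZ_{2}$).

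For stage one, I would use that $h$ is a Brouwer homeomorphism, so the vector field $v := h-\mathrm{id}$ is non-vanishing on the simply connected space $\RR^{2}$. Hence the angular map $v/|v| \colon \RR^{2} \to S^{1}$ admits a continuous lift $\tilde v \colon \RR^{2} \to \RR$, unique up to an additive constant in $2\pi\bbZ$. By definition of the winding number, $I(h,\gamma) = (\tilde v(\gamma(1)) - \tilde v(\gamma(0)))/(2\pi)$, so the index depends only on the endpoints. (Equivalently: the closed concatenation of two curves with the same endpoints has winding zero for a non-vanishing planar vector field.)

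For stage two, note that since $h$ is isotopic rel $\bbZ_{1}\cup\bbZ_{2}$ to a model map, $h$ agrees with that model on $\bbZ_{1}\cup\bbZ_{2}$; therefore $v$ is a horizontal unit vector at every point of $\bbZ_{1}\cup\bbZ_{2}$, and the values of $\tilde v$ on each $\bbZ_{i}$ lie in a single coset of $2\pi\bbZ$. The lemma then reduces to showing that $\tilde v$ is actually constant on $\bbZ_{1}$ and on $\bbZ_{2}$. Let $(h_{t})_{t\in[0,1]}$ be an isotopy from the model $h_{0}$ to $h_{1}=h$, with every $h_{t}$ fixing $\bbZ_{1}\cup\bbZ_{2}$ pointwise. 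For any integer points $(n_{1},1)$ and $(n_{2},1)$ of $\bbZ_{1}$ and any path $\sigma$ joining them in $\RR^{2}$, consider
\[
w(t) \;:=\; \frac{1}{2\pi}\int_{\sigma} d\arg v_{h_{t}} .
\]
At the endpoints, $v_{h_{t}} = (\pm 1, 0)$ is independent of $t$, so whenever $w(t)$ is defined it takes values in $\bbZ$; it depends continuously on $t$ wherever $h_{t}$ has no fixed point on $\sigma$, and is therefore locally constant. At $t=0$ one computes directly, on the model, that $w(0) = 0$ (e.g.\ for $T$, $v_{T}$ is the constant vector $(1,0)$). Transporting through the isotopy gives $w(1)=0$, so $\tilde v_{h}(n_{1},1)=\tilde v_{h}(n_{2},1)$, and symmetrically on $\bbZ_{2}$; combined with stage one, this is the lemma.

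The main obstacle is precisely the continuity step at stage two: during the isotopy $(h_{t})$, the intermediate maps need not be Brouwer, so $v_{h_{t}}$ may vanish somewhere, and for unfortunate choices of $\sigma$ the arc may cross $\mathrm{Fix}(h_{t})$. One has to show that either the path $\sigma$ can be chosen generically so as to avoid $\bigcup_{t}\mathrm{Fix}(h_{t})$, or that the jumps of $w(t)$ at crossings cancel. I would handle this by a transversality argument: for a generic isotopy and a generic $\sigma$, the set of ``bad'' parameters $t$ is finite and each corresponds to $\sigma$ crossing an isolated fixed point transversally, at which $w(t)$ jumps by a signed integer equal to a local Poincar\'e--Lefschetz index; the sum of these jumps over $t \in [0,1]$ is the total change, and by a standard index computation along the cylinder $[0,1]\times\sigma$ this sum vanishes. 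An alternative implementation uses Brouwer's translation arc theorem to choose the path $\sigma$ as a concatenation of translation arcs of $h$ from $(n_{1},1)$ to $(n_{2},1)$, reducing the problem to the sub-claim that the index along any translation arc of $h$ vanishes --- which in turn is the case where the isotopy argument is most transparent, as $\sigma$ and $h(\sigma)$ together form an embedded arc and the chord field is manifestly null-homotopic.
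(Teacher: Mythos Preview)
Your Stage~1 is fine and matches the paper's argument.

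The gap is in Stage~2. Your transversality argument is circular. Consider the map $F:[0,1]^2\to\RR^2$, $F(t,s)=h_t(\sigma(s))-\sigma(s)$. It is nonzero on the boundary of the square (constant on the sides $s=0,1$, nonvanishing on $t=0,1$ since the model and $h$ are Brouwer), and the winding of $F$ along $\partial([0,1]^2)$ equals $w(1)-w(0)=w(1)$. Your ``standard index computation along the cylinder'' just says that this winding equals the sum of local degrees of the interior zeros of $F$; it does \emph{not} tell you that either quantity is zero. There is no mechanism forcing the interior zeros (fixed points of $h_t$ landing on $\sigma$) to have total degree zero: you are only sampling $\mathrm{Fix}(h_t)$ along a single arc, so no global Lefschetz-type identity applies. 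So the conclusion $w(1)=0$ is exactly the thing to be proved, and the argument does not close.

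Your alternative via translation arcs is the right instinct, and it is what the paper actually does, but the execution is different from what you sketch and avoids the obstacle altogether. The paper never runs an isotopy from $h$ to the model; instead it runs an isotopy of \emph{conjugates} of $h$. Concretely: take a genuine translation arc $\gamma$ for $h$ from $(0,1)$ to $(1,1)$; invoke the uniqueness of homotopy translation arcs relative to a single orbit (Handel) to see that $\gamma$ and $h(\gamma)$ are isotopic rel $\bbZ_1$ to the horizontal segments $[0,1]\times\{1\}$ and $[1,2]\times\{1\}$; then use the straightening principle to get an isotopy $(\Psi_t)$ in $\homeo^+_0(\RR^2;\bbZ_1)$ with $\Psi_1(\gamma)=[0,1]\times\{1\}$ and $\Psi_1(h(\gamma))=[1,2]\times\{1\}$. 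Along this isotopy every map $\Psi_t h\Psi_t^{-1}$ is conjugate to $h$, hence fixed-point free, so the index $I(\Psi_t h\Psi_t^{-1},\Psi_t\gamma)$ is defined for all $t$ and constant. At $t=1$ you are computing the index of a map that sends the segment $[0,1]\times\{1\}$ onto $[1,2]\times\{1\}$, which is visibly zero. The point you were missing is that conjugating rather than isotoping to the model keeps you inside Brouwer homeomorphisms for free; the price is that you need the Handel uniqueness statement to know where the straightened arcs land.
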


Note that Figure~\ref{fig.bad-coordinates} above provides a counter-example if we remove from the hypothesis the condition on the isotopy class of $h$.
Given this lemma, the proof of Theorem~\ref{theo.1} relies on a proposition concerning conjugacy classes and centralizers in the group $MCG(\bbR^{2}; \bbZ_{1}, \bbZ_{2})$.
To state the proposition, we introduce the ``twist maps''  $T_{1}, T_{2}$ defined by $T_{1}(x,y) = (x+2-y,y)$ and $T_{2}(x,y) = (x+y-1,y)$: these maps act like horizontal translations on every horizontal line, $T_{1}$  translates $\bbR \times \{1\}$ from one unit to the right and is the identity on $\bbR \times \{2\}$, and $T_{2}$ does  the converse. Note that the unit translation $T$ is equal to $T_{1} T_{2}$, and the map  $R$ coincides with $T_{1} T_{2}^{-1}$  on the lines $\bbR\times \{1\}$ and  $\bbR\times \{2\}$.

\pagebreak[3]
\begin{prop}~ \label{prop.centralisateur2}
In the group $MCG(\bbR^{2}; \bbZ_{1}, \bbZ_{2})$,
\begin{itemize}
\item the elements $[R]$ and $[R]^{-1}$ are not conjugate, and not conjugate to $[T]$;
\item the elements  $[T]$ and $[T]^{-1}$ are conjugate;
\item the centralizer of $[R]$ is the free abelian group generated by $[T_{1}]$ and $[T_{2}]$;
\item the centralizer of $[T]$ coincides with the projection in $MCG(\bbR^{2}; \bbZ_{1}, \bbZ_{2})$ of the centralizer of $T$ in $\homeo^{+}(\bbR^{2}; \bbZ_{1}, \bbZ_{2})$: in other words, every mapping class that commutes with $[T]$ is the mapping class of a homeomorphism that commutes with $T$.
\end{itemize}
\end{prop}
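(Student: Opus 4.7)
The proposition has four parts. The first three items concern conjugacy in $MCG(\bbR^2;\bbZ_1,\bbZ_2)$, while item~(4) concerns the centralizer of $[T]$ together with the existence of a representative commuting with $T$ on the nose. My plan is to use Handel's classification theorem (stated just above) as the source of candidate conjugacy classes, introduce a suitable invariant to separate them, and then prove the centralizer statements by reducing to a triviality claim about mapping classes fixing the orbits pointwise.

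For items~(1) and~(2): a mapping class $[h]$ in $MCG(\bbR^2;\bbZ_1,\bbZ_2)$ induces a translation $n\mapsto n+\epsilon_i(h)$ on each orbit, with $\epsilon_i\in\{\pm 1\}$ reading $(+,+),(-,-),(+,-),(-,+)$ on the four standard classes $[T],[T^{-1}],[R],[R^{-1}]$ respectively. A conjugator $\Phi$ can independently flip each $\epsilon_i$ (when $\Phi|_{\bbZ_i}$ reverses the orbit's natural order), so the pair alone is not a conjugation invariant. A finer invariant, preserved under orientation-preserving conjugation, comes from the ``twist'' of $h$ on an arc joining the two orbits --- concretely, the isotopy class of $h(\gamma)$ relative to the orbits for a fixed transverse arc $\gamma$, quotiented by the unavoidable ambiguity --- and this distinguishes $[R]$ from $[R^{-1}]$ and both from the translation classes. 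Combining this with Handel's theorem yields item~(1). For item~(2), the invariant does not separate $[T]$ from $[T^{-1}]$, so Handel's theorem forces $[T^{-1}]$ to be conjugate to $[T]$; an explicit conjugator can be realized as an infinite product of compactly-supported, disjoint orientation-preserving half-twists implementing $n\mapsto -n$ on both orbits, with the supports routed through the strip $1<y<2$ and staggered to remain locally finite.

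For items~(3) and~(4): given $[f]$ commuting with $[R]$, the relation $[fRf^{-1}]=[R]$ forces $f|_{\bbZ_i}$ to commute with the shift induced by $R$, hence $f|_{\bbZ_i}:n\mapsto n+a_i$ for integers $a_1,a_2$. Since $[T_1]^{a_1}[T_2]^{a_2}$ already commutes with $[R]$ and acts the same way on the orbits, multiplying $f$ by its inverse reduces to the case where $f$ fixes $\bbZ_1\cup\bbZ_2$ pointwise; showing that such $[f]$ is then trivial in $MCG$ yields item~(3). For item~(4), the same reasoning applied to $T$ forces $a_1=a_2$, and the remaining argument produces an explicit representative commuting with $T$ on the nose by lifting a suitable map from the quotient cylinder $\bbR^2/\langle T\rangle$ with two marked points. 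The main obstacle is this final triviality step: showing that any $[f]$ commuting with $[R]$ (or $[T]$) and fixing the orbits pointwise is isotopic to the identity rel orbits. Here homotopy Brouwer theory is essential: I would invoke a canonical invariant family, namely the pair of horizontal lines supplemented, in the case of $R$, by ``Reeb arcs'' joining them through the Reeb region; this family is unique up to isotopy rel orbits, must be preserved by any commuting class up to isotopy, and an Alexander-trick argument in each complementary simply-connected region then shows $[f]$ is trivial.
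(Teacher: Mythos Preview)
Your outline for item~(3) is close to the paper's argument and would work: the key input is the \emph{uniqueness} of the isotopy class of homotopy translation arcs for $R$ with given endpoints (Franks--Handel, Lemma~8.7(2)), which forces any $\Psi$ centralizing $[R]$ to send each $R^n\alpha_i$ to (something isotopic to) $R^{n+n_i}\alpha_i$; the straightening principle plus Alexander's trick on the three complementary strips then gives $[\Psi]=[T_1^{n_1}T_2^{n_2}]$. Your ``canonical invariant family'' is exactly the pair of geodesic streamlines $A_1,A_2$, and its uniqueness is precisely this lemma.

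There are, however, two genuine gaps.

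\textbf{Item~(1).} Your ``twist on a transverse arc $\gamma$'' is not a well-defined conjugacy invariant as stated: the isotopy class of $h(\gamma)$ depends on $\gamma$, and you have not said what the ``unavoidable ambiguity'' is or why anything survives the quotient. The paper's invariants are concrete: $[T]$ and $[R]$ are distinguished because $T$ admits infinitely many isotopy classes of homotopy translation arcs with given endpoints while $R$ admits exactly one; and $[R]$ is distinguished from $[R]^{-1}$ by an orientation argument --- a conjugator $\Psi$ would, by uniqueness of translation arcs, have to preserve the streamline $A_1=\bbR\times\{1\}$ while reversing its orientation, hence (being orientation-preserving on the plane) swap the two sides of $A_1$, contradicting $\Psi(\bbZ_2)=\bbZ_2$.

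\textbf{Item~(4).} First, your claim ``$a_1=a_2$'' is false: $T_1$ commutes with $T$ and has $(a_1,a_2)=(1,0)$. More importantly, your ``canonical invariant family'' argument breaks down for $T$ precisely because homotopy translation arcs for $T$ are \emph{not} unique (Figure~\ref{fig.hta-for-T}): there is no reason a centralizing $\Psi$ should preserve the horizontal streamlines up to isotopy, so you cannot reduce to Alexander's trick in the complementary strips. The paper circumvents this by choosing a hyperbolic structure on $\bbR^2\setminus(\bbZ_1\cup\bbZ_2)$ for which $T$ is an \emph{isometry}; then the geodesic representative of $\Psi(\alpha_i)$ generates a $T$-invariant streamline, the straightening principle produces $\Psi'\sim\Psi$ sending $T^n\alpha_i$ to $T^n(\Psi\alpha_i)^\sharp$, and only \emph{then} does one descend to the cylinder $\bbR^2/T$, match the two pairs of disjoint essential curves by a surface homeomorphism, and lift back to get $\Psi''$ commuting with $T$. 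Your mention of the quotient cylinder is on the right track, but without the $T$-isometric structure you have no control over what $\Psi$ does to a streamline, and the argument does not go through.
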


In the end of this subsection we deduce Theorem~\ref{theo.1} from Lemma~\ref{lemma.curve} and Proposition~\ref{prop.centralisateur2}. In subsection~\ref{subsection.basic} we will introduce tools from homotopy Brouwer theory which will be used in subsections~\ref{subsection.curve} and~\ref{subsection.centralizers} to prove the lemma and the proposition.

The second point of the proposition, namely that the mapping class of $T$ is conjugate to its inverse, will follow from the stronger fact that $T$ is conjugate to its inverse within the group $\homeo^{+}(\bbR^{2}; \bbZ_{1}, \bbZ_{2})$. In the proof of the theorem we will make use of a homeomorphism $\Psi_{0} \in   \homeo^{+}(\bbR^{2}; \bbZ_{1}, \bbZ_{2})$ realizing the conjugacy. Note that a half-turn rotation realizes a conjugacy between $T$ and $T^{-1}$, and there is such a rotation which preserves $\bbZ_{1} \cup \bbZ_{2}$, but this rotation exchanges $\bbZ_{1}$ and $\bbZ_{2}$, thus it does not belong to the group $\homeo^{+}(\bbR^{2}; \bbZ_{1}, \bbZ_{2})$, and the actual construction of $\Psi_{0}$ is less straightforward. Here is one way to do it. Consider the quotient space $\bbR^{2}/T$, which is an infinite cylinder. Take a homeomorphism of this cylinder that is isotopic to the identity and that exchanges the projection of the orbits $\bbZ_{1}$ and $\bbZ_{2}$. Lifting this homeomorphism, we get a homeomorphism $\Psi_{1}$ that commutes with $T$ and exchanges $\bbZ_{1}$ and $\bbZ_{2}$ (more concretely, this map may be obtained as an infinite composition of ``Dehn twists'', pairwise conjugated by powers of $T$). Let $\Psi_{2}$ be the rotation as above, that  exchanges both orbits and satisfy $\Psi_{2} T \Psi_{2}^{-1} = T^{-1}$. The composition $\Psi_{0} = \Psi_{2} \Psi_{1}$ suits our needs, namely we have $\Psi_{0} T \Psi_{0}^{-1} = T^{-1}$, $\Psi_{0} \bbZ_{1} = \bbZ_{1}$ and $\Psi_{0} \bbZ_{2} = \bbZ_{2}$. We will also use the specific form of $\Psi_{0}$ as a composition of a rotation and a map that commutes with $T$.

In the following proof we will also make use of the arcwise connectedness of the space of orientation preserving homeomorphisms that commutes with $T$. This is a consequence of the following facts.  Any element of this space induces an element of the space of homeomorphisms of the infinite cylinder $\bbR^{2}/T$ that preserve the orientation and both ends of the cylinder. This latter space is arcwise connected (see~\cite{hamstrom1966homotopy}), and every isotopy in the infinite cylinder lifts to an isotopy among homeomorphisms of the plane that commutes with $T$.

\begin{proof}[Proof of Theorem~\ref{theo.1}]
Let $h$ be a Brouwer homeomorphism, and $\cO_{1}, \cO_{2}$ be two orbits of $h$. We consider two maps $\Phi_{1}, \Phi_{2}$ as in section~\ref{subsection.index} and Theorem~\ref{theo.1}: 
\begin{itemize}
\item $\Phi_{1},\Phi_{2}$ are orientation preserving homeomorphisms that send $\cO_{1}, \cO_{2}$ respectively to $\bbZ_{1}$, $\bbZ_{2}$, 
\item  the maps $h_{1} = \Phi_{1} h \Phi_{1}^{-1}$ and $h_{2}= \Phi_{2} h \Phi_{2}^{-1}$ are respectively isotopic,   relative to $\bbZ_{1} \cup \bbZ_{2}$, to maps $U_{1}, U_{2}$ which belongs to $\{T,T^{-1}, R, R^{-1} \}$.
\end{itemize}
According to Lemma~\ref{lemma.curve}, the index of a Brouwer homeomorphism $h'$ in $\homeo^{+}(\bbR^{2}; \bbZ_{1},\bbZ_{2})$  along a curve joining  $\bbZ_1$ to $\bbZ_2$ does not depend on the curve, thus we may denote it by $I(h')$.
We want to show that $I(h_{1}) = I(h_{2})$. 
The map $\Psi = \Phi_{2} \Phi_{1}^{-1}$ has the following properties:
\begin{enumerate}
\item $\Psi (\bbZ_1) = \bbZ_1$, $\Psi ( \bbZ_2) = \bbZ_2$,
\item $\Psi U_{1} \Psi^{-1}$ is isotopic  to $U_{2}$ relative to $\bbZ_1 \cup \bbZ_2$,

and

\item  $U_{1}, U_{2}$ belongs to $\{T,T^{-1}, R, R^{-1} \}$.
\end{enumerate}

From now on we work with $\Psi,U_{1},U_{2} \in \homeo^{+}(\bbR^{2}; \bbZ_{1},\bbZ_{2})$ satisfying these three properties, with a Brouwer homeomorphism $h_{1}$ that is isotopic to $U_{1}$ relative to $\bbZ_1 \cup \bbZ_2$, and with $h_{2} = \Psi h_{1} \Psi^{-1}$ (the reader may forget everything about $h, \Phi_{1},\Phi_{2}, \cO_{1}, \cO_{2}$). 
We choose some curve $\gamma$ joining some point in $\bbZ_{1}$ to some point in $\bbZ_{2}$.

We first treat the ``Reeb case'', \emph{i.e.} the case when $U_{1} = R$ or $R^{-1}$. According to the first point of the above Proposition, in this case $U_{1}$ is not conjugate to any of the three other maps in $\{T,T^{-1}, R, R^{-1} \}$. Thus property 2 of the map $\Psi$ implies that $U_{2}=U_{1}$.
Furthermore, the third point of the proposition provides integers $n_{1}, n_{2}$ such that $\Psi$ is isotopic to $\Psi' = T_{1}^{n_{1}} T_{2}^{n_{2}}$ relative to $\bbZ_1 \cup \bbZ_2$. 
We remark that if $(h_{t})$ is an isotopy in $\homeo^{+}(\bbR^{2}; \bbZ_1,\bbZ_2)$, and if every $h_{t}$ is a fixed point free homeomorphism, then the index
$I(h_{t}) = I(h_{t},\gamma)$ is defined for every $t$, and it is an integer or a half integer that depends continuously on $t$, thus it is constant. Using this remark, we first see that
$$
  I(h_{2}) = I(\Psi h_{1} \Psi^{-1}) = I( \Psi' h_{1} {\Psi'}^{-1}).
$$
We conclude this case by the equality $I( \Psi' h_{1} {\Psi'}^{-1}) = I(h_{1})$, which follows from the next lemma since $\Psi'$ commutes with $T$.
\begin{lemm}\label{lemm.index}	
Let $h,\Phi \in \homeo^{+}(\bbR^{2}; \bbZ_1,\bbZ_2)$ and assume that $h$ is fixed point free and $\Phi$ commutes with $T$. Then $I(\Phi h \Phi^{-1}) = I(h)$.
\end{lemm}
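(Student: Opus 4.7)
The plan is an isotopy argument. Using the arcwise connectedness of the space of orientation preserving homeomorphisms commuting with $T$ recalled just above, I would pick an isotopy $(\Phi_t)_{t \in [0,1]}$ from $\Phi_0 = \mathrm{id}$ to $\Phi_1 = \Phi$ such that every $\Phi_t$ commutes with $T$; crucially, I do not demand that the intermediate $\Phi_t$ preserve $\bbZ_1$ or $\bbZ_2$. After fixing a curve $\gamma$ from a point $(a,1) \in \bbZ_1$ to a point $(b,2) \in \bbZ_2$, I would introduce the continuous family of vector fields
$$
V_t(x) = \Phi_t(h(x)) - \Phi_t(x), \qquad t \in [0,1],\ x \in \bbR^2.
$$
Since $h$ is fixed point free and $\Phi_t$ is a homeomorphism, $V_t$ vanishes nowhere, so the winding number $W(t)$ of $V_t$ along $\gamma$ is well defined and continuous in $t$. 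A direct change of variable $y = \Phi(x)$ gives $W(0) = I(h,\gamma)$ and $W(1) = I(\Phi h \Phi^{-1}, \Phi(\gamma))$, which by Lemma~\ref{lemma.curve} are just $I(h)$ and $I(\Phi h \Phi^{-1})$.

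The heart of the proof is the behaviour of $V_t$ at the two endpoints of $\gamma$. Since $h(a,1) \in \bbZ_1$, I can write $h(a,1) = T^k(a,1) = (a+k,1)$ for a unique integer $k$, and $k \neq 0$ because $h$ has no fixed points. The relation $\Phi_t T = T \Phi_t$ then yields
$$
V_t(a,1) = \Phi_t\bigl(T^k(a,1)\bigr) - \Phi_t(a,1) = T^k\bigl(\Phi_t(a,1)\bigr) - \Phi_t(a,1) = (k,0),
$$
a nonzero horizontal vector independent of $t$. The same calculation at the other endpoint produces $V_t(b,2) = (\ell,0)$ with $\ell \neq 0$, also independent of $t$.

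To conclude I would pick a continuous lift $\Theta \colon [0,1]^2 \to \bbR$ of $\arg V_t(\gamma(s))$; the boundary restrictions $t \mapsto \Theta(t,0)$ and $t \mapsto \Theta(t,1)$ are continuous lifts of the constant angles $\arg(k,0)$ and $\arg(\ell,0)$, hence are themselves constant. Therefore $W(t) = (\Theta(t,1) - \Theta(t,0))/(2\pi)$ is constant in $t$ and $W(0) = W(1)$ is the desired equality. The main point to keep in mind, which I expect to be the only psychologically confusing step, is that because the orbits $\Phi_t(\bbZ_i)$ wander off the horizontal lines for $t \in (0,1)$, the field $V_t$ need not be horizontal along the interior of $\gamma$ and $W(t)$ need not take half integer values at intermediate times. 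The rigidity forced at the two endpoints of $\gamma$ by commutativity of $\Phi_t$ with $T$ is exactly what compensates for the absence of half integrality and pins $W$ down as a constant function of $t$.
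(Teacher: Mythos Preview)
Your proof is correct and is essentially the paper's own argument: both choose an isotopy $(\Phi_t)$ through homeomorphisms commuting with $T$, and both exploit that for $x \in \bbZ_1 \cup \bbZ_2$ one has $h(x) = T^n(x)$, whence $\Phi_t(h(x)) - \Phi_t(x) = T^n(\Phi_t(x)) - \Phi_t(x) = (n,0)$, so the endpoint vectors are horizontal (indeed constant). One small correction to your commentary: your claim that ``$W(t)$ need not take half integer values at intermediate times'' is actually false, since you yourself computed $V_t(a,1)=(k,0)$ and $V_t(b,2)=(\ell,0)$ horizontal for every $t$; this half-integrality is precisely how the paper concludes constancy of $W(t)$, in place of your (equally valid) lifting argument.
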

\begin{proof}[Proof of Lemma~\ref{lemm.index}]
Since the space of homeomorphisms that commute with $T$ is arcwise connected, we may choose an isotopy $(\Phi_{t})$ from the identity to $\Phi$ every time of which commutes with $T$. Note that for every point $x \in \bbZ_1 \cup \bbZ_2$ we have $h (x) = T^n (x)$ for some $n$, and thus $(\Phi_{t} h \Phi_{t}^{-1}) (\Phi_{t} x) = T^n (\Phi_{t}x)$. We deduce that for each fixed $t$ the quantity
$$
I(\Phi_{t} h \Phi_{t}^{-1},\Phi_{t} \gamma)
$$
is an integer or a half integer. By continuity this quantity does not depend on $t$, and we get the lemma.
\end{proof}

Now let us turn to the ``translation case'', when $U_{1} = T^{\pm1}$. In this case, the Proposition says that $U_{2} = T^{\pm 1}$.
Let us first treat the sub-case when $U_{2}=U_{1}$. We have $[\Psi T \Psi^{-1}] = [T]$, and the fourth point of the proposition says that the map $\Psi$ is isotopic relative to $\bbZ_1 \cup \bbZ_2$ to a homeomorphism $\Psi'$ that commutes with $T$.
As in the Reeb case, we first have the equality $I(h_{2}) = I(\Psi h_{1} \Psi^{-1}) = I(\Psi' h_{1} \Psi'^{-1})$, and then the equality $I(\Psi' h_{1} \Psi'^{-1}) = I(h_{1})$ follows from  Lemma~\ref{lemm.index}.

It remains to consider the sub-case when $U_{2} = U_{1}^{-1}$, that is, when $[\Psi T \Psi^{-1}] = [T^{-1}]$.
Here we will use  the homeomorphism $\Psi_{0}$ whose construction was given before the proof. The mapping class  $[\Psi \Psi_{0}^{-1}]$ commutes with $[T]$. According to the proposition, $\Psi \Psi_{0}^{-1}$ is isotopic relative to $\bbZ_1 \cup \bbZ_2$ to a homeomorphism that commutes with $T$, and thus (composing with $\Psi_{0}$) we see that $\Psi$ is isotopic to a homeomorphism $\Psi'$ satisfying $\Psi' T \Psi'^{-1} = T^{-1}$.
Then we have 
$$
\begin{array}{rcl}
I(h_{2}) & =&  I(\Psi h_{1} {\Psi}^{-1}) \\
& =&  I(\Psi' h_{1} {\Psi'}^{-1}) \\
& =& I(\left(\Psi' \Psi_{0}^{-1} \right)  \Psi_{0} h_{1} \Psi_{0}^{-1}  \left(\Psi' \Psi_{0}^{-1} \right) ^{-1})  \\ 
&=&  I(\Psi_{0} h_{1} \Psi_{0}^{-1}) 
\end{array}
$$
where the last equality follows from Lemma~\ref{lemm.index}.
It remains to prove the equality $I(\Psi_{0} h_{1} \Psi_{0}^{-1}) = I(h_{1})$.  Remember that $\Psi_{0}$ was defined as the composition $\Psi_{0}=\Psi_{2}\Psi_{1}$, where $\Psi_{1}$ commutes with $T$ and $\Psi_{2}$ is a half-turn rotation. Consider an isotopy from the identity to $\Psi_{2}$ among rotations, compose this isotopy on the right with $\Psi_{1}$ to get an isotopy from $\Psi_{1}$ to $\Psi_{2} \Psi_{1} = \Psi_{0}$, and concatenate this first isotopy with a second isotopy from the identity to $\Psi_{1}$ among homeomorphisms that commute with $T$. The resulting isotopy goes from the identity to $\Psi_{0}$, denote it by $(\Psi'_{t})$. Then for every fixed $t$ the map 
$\Psi'_{t} h_{1} {\Psi'_{t}}^{-1}$ acts like a translation (by a non horizontal vector) on $\Psi'_{t} (\bbZ_1 \cup \bbZ_2)$, so that the index of this map along the curve $\Psi'_{t}(\gamma)$ is an integer. We conclude that $I(\Psi_{0} h_{1} \Psi_{0}^{-1}, \Psi_{0}(\gamma)) = I(h_{1}, \gamma)$, which completes the proof of the claim.
\end{proof}

\subsection{Basic tools for homotopy Brouwer theory}\label{subsection.basic}
We review some basic objects of homotopy Brouwer theory, namely \emph{homotopy translation arcs} and \emph{homotopy streamlines}. Everything here comes from~\cite{handel1999fixed}.

Let us fix an orientation preserving homeomorphism $h$, and points $x_{1}, \dots x_{r}$ whose orbits are properly embedded, \emph{i.e.} such that
$$
\lim_{n\to \pm \infty} h^n x_{i} = \infty.
$$
We denote by $\cO_{1}, \dots,  \cO_{r}$ the orbits  of $x_{1}, \dots , x_{r}$, and by $\cO$ their union. We consider the set $\cA_{0}$ of continuous injective curves $\alpha : [0,1] \to \bbR^2$  which are disjoint from $\cO$ except at their end-points which are supposed to be points of $\cO$. Two elements $\alpha,\beta$ of $\cA_{0}$ are said to be \emph{isotopic relative to $\cO$} if there exists an isotopy  $(h_{t})_{t \in [0,1]}$  in the group $\homeo^{+}_{0}(\bbR^{2}; \cO_{1}, \dots , \cO_{r})$, such that $h_{0} = \mathrm{Id}$ and $h_{1}(\alpha) = \beta$.
Two elements $\alpha,\beta$ are \emph{homotopically disjoint} if $\alpha$ is isotopic relative to $\cO$ to an element $\alpha'$ such that $\alpha' \cap \beta \subset \cO$. This is a symmetric relation.
A sequence of curves $(\alpha_{n})_{n \geq 0}$ in $\cA_{0}$ is said to be \emph{homotopically proper} if 
 for every compact subset $K$ of the plane, there exists $n_{0}$ such that for every $n \geq n_{0}$, there exists $\alpha' \in \cA_{0}$ which is isotopic to $\alpha_{n}$ relative to $\cO$ and disjoint from $K$.

Note that the map $h$ acts naturally on $\cA_{0}$. An element $\alpha$ of $\cA_{0}$ is a \emph{homotopy translation arc} for $[h;\cO_{1}, \dots , \cO_{r}]$ if $h (\alpha(0)) = \alpha(1)$ and $\alpha$ is homotopically disjoint from all its iterates under $h$. An element $\alpha$ is \emph{forward proper} if the sequence $(h^n (\alpha))_{n \geq 0}$ is homotopically proper.
Backward properness is defined analogously. We insist that these notions (homotopy translation arcs, properness) depend only on the mapping class of $h$ relative to $\cO$, rather than on $h$ itself.

For example, take $\cO = \bbZ_1 \cup \bbZ_2$. Then the segment $[0,1] \times \{1\}$ is a homotopy translation arc for the Reeb homeomorphism $R$ which is both backward and forward proper. It is not difficult to see that every homotopy translation arc for $R$, relative to 
$\bbZ_1 \cup \bbZ_2$, and joining the points $(0,1)$ and $(1,1)$, is homotopic to this segment (see~\cite{franks2003periodic}, Lemma 8.7 (2)). The same segment is also a homotopy translation arc for the translation $T$, but in this latter case there are infinitely many distinct homotopy classes of translation arcs having the same end-points (see Figure~\ref{fig.hta-for-T}).
These examples will be useful to prove points 1 and 2 of Proposition~\ref{prop.centralisateur2}.

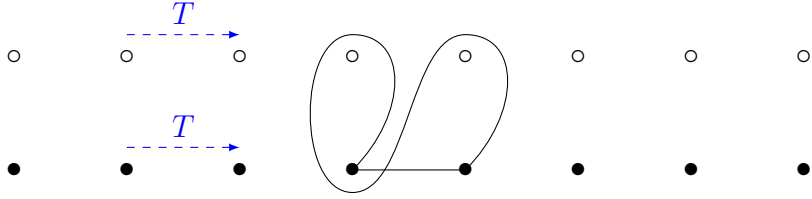
\begin{figure}[ht]
\begin{center}
\begin{tikzpicture}[scale=1.5]
\tikzstyle{fleche}=[>=latex,->]
\foreach \k in {-3,...,4} 
	{\draw (\k,1) node {$\bullet$} ; \draw (\k,2) node {$\circ$} ; }
\def\depl{0.2}
\draw [fleche,color=blue,dashed] (-2,2+\depl) -- (-1, 2+\depl) node [midway,above] {$T$} ;
\draw [fleche,color=blue,dashed] (-2,1+\depl) -- (-1, 1+\depl) node [midway,above] {$T$} ;

\draw (0,1) --(1,1) ;

\draw (0,1) .. controls +(0.5,0.5) and +(0.5,0) ..
	 (0,2.2) .. controls +(-0.5,0) and +(-0.5,0) ..
	  (0,0.8) .. controls +(0.5,0) and +(-0.5,0) .. 
	  (1,2.2) .. controls +(0.5,0) and +(0.5,0.5) ..  (1,1) ;

\end{tikzpicture}
\end{center}
\caption{Two homotopy translation arcs for $[T]$ relative to two orbits}
\label{fig.hta-for-T}
\end{figure}

Still following Handel, we endow the complementary set of $\cO$ in the plane with a (complete) hyperbolic metric: in other words, we consider a discrete subgroup of isometries $G$ of the hyperbolic plane $\bbH^2$ such that the quotient $\bbH^2/G$ is homeomorphic to $\bbR^2 \setminus \cO$. Furthermore, $G$ may be chosen to be of the first kind (see for example~\cite{katok1992fuchsian} for the definitions). The main purpose of this geometrisation of $\bbR^2 \setminus \cO$ is to provide each isotopy class $[\alpha]$ in $\cA_{0}$ with a unique hyperbolic geodesic $\alpha^\sharp$. The family of geodesics have wonderful properties. In particular, they have minimal intersection so, for instance, two elements of $\cA_{0}$ are homotopically disjoint if and only if the associated hyperbolic geodesics have intersection included in $\cO$.
The other essential properties are summed up in the following lemma (see~\cite{handel1999fixed}, lemma 3.5; \cite{matsumoto2000arnold}, beginning of section~2; \cite{leroux2012introduction}, appendix~1).

\begin{lemm}~\label{lemm.straightening}

\begin{enumerate}
\item Let $\{\alpha_{i}\}$ be a locally finite family of curves in $\cA_{0}$  which are pairwise non isotopic relative to $\cO$, and have pairwise intersections included in $\cO$. Then there exists $f \in \mathrm{Homeo}^+_{0}(\bbR^2; \cO_{1}, \dots, \cO_{r})$ such that for every $i$, $f(\alpha_{i}) = \alpha_{i}^\sharp$.

\item More generally, let $\{\alpha_{i}\}$ be as in the previous item, and 
let $\{\beta_{j}\}$ having the same properties.  Assume that for every $i,j$, the curve $\alpha_{i}$ is  not isotopic to $\beta_{j}$ relative to $\cO$ and in minimal position with $\beta_{j}$. Then there exists $f \in \mathrm{Homeo}^+_{0}(\bbR^2; \cO_{1}, \dots, \cO_{r})$ such that for every $i,j$, $f(\alpha_{i}) = \alpha_{i}^\sharp$ and  $f(\beta_{j}) = \beta_{j}^\sharp$.

\item Let  $(\alpha_{n})_{n \geq 0}$ be a sequence  in $\cA_{0}$.
 If  $(\alpha_{n})_{n \geq 0}$ is homotopically proper then  $(\alpha_{n}^\sharp)_{n \geq0}$ is proper.
\end{enumerate}
\end{lemm}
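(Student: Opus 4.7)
The plan is to prove items 1 and 2 via a standard change-of-coordinates argument for systems of arcs in a planar surface, and item 3 via a compactness argument in the hyperbolic metric on $\bbR^{2} \setminus \cO$.

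For item 1, the first step is to observe that the geodesic family $\{\alpha_{i}^\sharp\}$ again has pairwise intersections contained in $\cO$: hyperbolic geodesics realize the geometric intersection number in their free isotopy classes, and this number is zero for pairs of homotopically disjoint arcs. Enumerating the locally finite family as $\alpha_{1}, \alpha_{2}, \dots$, I then straighten inductively. Suppose $f_{n-1} \in \homeo^{+}_{0}(\bbR^{2}; \cO_{1}, \dots, \cO_{r})$ has been built so that $f_{n-1}(\alpha_{i}) = \alpha_{i}^\sharp$ for $i<n$. I would define $f_{n}$ by composing $f_{n-1}$ with an ambient isotopy supported in a small neighborhood of the topological disk co-bounded by $f_{n-1}(\alpha_{n})$ and $\alpha_{n}^\sharp$, chosen disjoint from the already-straightened arcs (possible because $\alpha_{n}^\sharp$ meets each $\alpha_{i}^\sharp$ only in $\cO$). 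The classical ingredient used at each step is that two arcs in a planar surface with the same endpoints and in the same isotopy class rel endpoints are ambiently isotopic rel endpoints via an isotopy with arbitrarily small support. Local finiteness lets the successive supports exhaust $\bbR^{2}$ and guarantees that $f = \lim f_{n}$ is a well-defined element of $\homeo^{+}_{0}$.

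For item 2, apply item 1 to straighten the $\alpha$-family first; since ambient isotopy preserves minimal position, the $\beta_{j}$'s are then in minimal position with the $\alpha_{i}^\sharp$'s. The bigon criterion guarantees the absence of bigons between $\beta_{j}$ and any $\alpha_{i}^\sharp$, so a second application of the inductive straightening argument, with supporting isotopies chosen disjoint from the $\alpha_{i}^\sharp$'s except at common endpoints, sends each $\beta_{j}$ to $\beta_{j}^\sharp$ without disturbing the already-straightened $\alpha$-geodesics.

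For item 3, I would argue by contradiction. Suppose some compact $K \subset \bbR^{2}$ meets $\alpha_{n}^\sharp$ for infinitely many $n$. Since $\cO$ is discrete in $\bbR^{2}$, homotopy properness forces the two endpoints of $\alpha_{n}$ (which lie in $\cO$) to leave every compact set as $n \to \infty$. Choose a compact $L \subset \bbR^{2} \setminus \cO$ containing $K$ minus small cusp neighborhoods of the finitely many punctures of $\cO$ near $K$; then $L$ has finite diameter in the hyperbolic metric, so each $\alpha_{n}^\sharp$ traverses $L$ along a geodesic segment of uniformly bounded length. Extracting $p_{n} \in \alpha_{n}^\sharp \cap K$ with $p_{n} \to p$ and tangent directions converging, compactness of the unit tangent bundle over $L$ yields (along a subsequence) a limiting geodesic $\gamma$ through $p$ to which the $\alpha_{n}^\sharp$ converge on compact sets. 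On the other hand, homotopy properness applied to a larger compact set $K'$ gives representatives $\alpha_{n}'$ of $[\alpha_{n}]$ disjoint from $K'$, and the bigon criterion between $\alpha_{n}^\sharp$ and $\alpha_{n}'$ forces a disk in $\bbR^{2} \setminus \cO$ that simultaneously contains a point of $K$ (on $\alpha_{n}^\sharp$) and is bounded away from $K$ on the other side; pushing $K'$ to infinity while using that $G$ is of the first kind (no funnels in $\bbR^{2} \setminus \cO$) yields the desired contradiction.

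The main obstacle will be item 3. The compactness argument is straightforward in spirit but the cusp geometry near the infinitely many punctures of $\cO$ accumulating at the end of $\bbR^{2}$ makes it delicate to turn ``the geodesic limit passes through $K$'' into an actual contradiction with homotopy properness. In particular, care is needed to rule out the scenario where the endpoints of $\alpha_{n}^\sharp$ escape to infinity in $\bbR^{2}$ along cusps that are nevertheless hyperbolically close, which would allow $\alpha_{n}^\sharp$ to linger through a fixed compact region without directly violating any asymptotic condition on the $\alpha_{n}$.
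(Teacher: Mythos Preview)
The paper does not prove this lemma; it is stated with references to \cite{handel1999fixed}, \cite{matsumoto2000arnold}, and \cite{leroux2012introduction}, so there is no in-paper proof to compare against. Your approach to items 1 and 2 by inductive straightening is the standard one and is essentially what appears in those references.

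For item 3, however, you are making life much harder than necessary, and the argument as written has a genuine gap that you yourself identify. The limiting-geodesic strategy runs into exactly the cusp difficulty you describe, and the final paragraph does not actually close it: you never explain how ``pushing $K'$ to infinity'' together with the first-kind hypothesis produces a contradiction, and in fact geodesics with endpoints escaping to infinity in $\bbR^2$ can perfectly well pass through a fixed compact set without any immediate topological obstruction.

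The clean argument, which is what the cited references do, bypasses all of this. Choose an exhaustion of $\bbR^2$ by compact topological disks $D_m$ whose boundaries $\partial D_m$ are simple closed \emph{geodesics} in $\bbR^2 \setminus \cO$ (such an exhaustion exists because $\cO$ is discrete and every essential simple closed curve in $\bbR^2 \setminus \cO$ has a geodesic representative). Now fix $m$. By homotopy properness, for all large $n$ the arc $\alpha_n$ is isotopic rel $\cO$ to an arc $\alpha_n'$ disjoint from $D_m$; in particular the endpoints of $\alpha_n$ lie outside $D_m$ and $\alpha_n'$ has zero geometric intersection with $\partial D_m$. Since $\alpha_n^\sharp$ and $\partial D_m$ are both geodesics, they realize minimal intersection, hence $\alpha_n^\sharp \cap \partial D_m = \emptyset$. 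As the endpoints of $\alpha_n^\sharp$ lie outside $D_m$, the whole arc $\alpha_n^\sharp$ lies outside $D_m$. This gives properness directly, with no compactness extraction and no cusp analysis.
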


 Since pairs of geodesics are in minimal position, if $\alpha$ is a homotopy translation arc then $(h^n \alpha)^\sharp \cap \alpha^\sharp \subset \cO$ for every $n \neq 0$. Thus the concatenation
$$
\alpha^\sharp \star \cdots \star (h^n \alpha)^\sharp \star \cdots
$$
is the image of $[0,+\infty)$ under an injective continuous map $A^{+}$. Furthermore, point 3 of the lemma entails that the homotopy translation arc $\alpha$ is forward proper if and only if 
the map $A^{+}$ is proper\footnote{Remember that a map is \emph{proper} if the inverse image of every compact subset of the target is compact; in our context, the map $A^{+}$ is proper if and only if $\lim_{t \to +\infty} A^+(t) = \infty$.}. 
The embedding $A^+$ will be called the \emph{proper forward (geodesic) homotopy streamline} generated by $\alpha$.
\emph{Proper backward (geodesic) homotopy streamlines} are defined analogously. The union of a proper backward geodesic homotopy streamline and a proper forward geodesic homotopy streamline generated by the same homotopy translation arc will be called a \emph{proper (geodesic) homotopy streamline}; it is a \emph{topological line}, that is, the image of a proper injective continuous map from the real line to the plane. The word ``geodesic'' will generally be omitted. 

Assume the homotopy translation arc  $\alpha$ is both forward and backward proper.
Then the set $\{ (h^{n} \alpha)^{\sharp} , i=1, \dots, r, n \in \bbZ \}$
is a locally finite family of geodesics having pairwise intersections included in $\cO$ (and whose union is the proper homotopy streamline $A$). The image of this family under $h$ is again a locally finite family of elements of $\cA_{0}$ having pairwise intersections included in $\cO$. 
Note furthermore that for each $n$ and $i$, $h((h^{n} \alpha)^{\sharp})$ is isotopic to the geodesic $(h^{n+1} \alpha)^{\sharp}$ relative to $\cO$.
The first point of the above lemma applies: there exists an element $f$ of the group $\homeo^{+}_{0}(\bbR^{2}; \cO_{1}, \dots , \cO_{r})$ which sends 
$h((h^{n} \alpha)^\sharp)$ to  $(h^{n+1} \alpha)^{\sharp}$ for every $n$. Then $h':=fh$ belongs to the mapping class of $h$ and satisfies ${h'}^n(\alpha^\sharp) = (h^{n} \alpha)^{\sharp}$ for every integer $n$: in other words, the proper homotopy streamline $A$ is the concatenation of all the iterates of $\alpha^\sharp$ under $h'$ (the arc $\alpha^\sharp$ is called a \emph{translation arc} for $h'$ in classical Brouwer theory, see for example~\cite{guillou1994brouwer}). The construction is easily generalized to give the following corollary.
\begin{coro}\label{coro.straightening}
If $A_{1}, \dots, A_{r}$ is a family of pairwise disjoint proper homotopy streamlines for $h$, then there exists a homeomorphism $h'$ isotopic to $h$ relative to $O$ such that $h'(A_{i}) = A_{i}$ for every $i= 1, \dots, r$.
\end{coro}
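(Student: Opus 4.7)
The plan is to repeat the construction carried out for a single streamline just before the statement, applied simultaneously to all the geodesic arc-pieces of all the streamlines $A_{1},\dots,A_{r}$. For each $i$, fix a forward- and backward-proper homotopy translation arc $\alpha_{i}$ generating $A_{i}$, so that $A_{i} = \bigcup_{n \in \bbZ} (h^{n}\alpha_{i})^{\sharp}$, and consider the family $\cF = \{(h^{n}\alpha_{i})^{\sharp} : 1 \le i \le r,\ n \in \bbZ\}$. The goal is to apply point~1 of Lemma~\ref{lemm.straightening} to the image family $h(\cF)$ to produce an element of $\homeo^{+}_{0}(\bbR^{2};\cO_{1},\dots,\cO_{r})$ which realigns $h((h^{n}\alpha_{i})^{\sharp})$ onto the geodesic $(h^{n+1}\alpha_{i})^{\sharp}$.

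First I would check the hypotheses of Lemma~\ref{lemm.straightening}(1) for $\cF$, and consequently for $h(\cF)$ since $h$ preserves $\cO$. Local finiteness follows from point~3 of Lemma~\ref{lemm.straightening} applied to each of the finitely many streamlines $A_{i}$. For the pairwise intersection condition, inside a single $A_{i}$ the arcs $h^{m}\alpha_{i}$ and $h^{n}\alpha_{i}$ ($m\neq n$) are homotopically disjoint (applying $h^{m}$ to the defining property of a homotopy translation arc), so their geodesic representatives meet only in $\cO$; for $i \neq j$ the assumption $A_{i}\cap A_{j} = \emptyset$ directly yields disjointness of the corresponding geodesics.

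Next I would verify the non-isotopy condition. Here is the one slightly delicate point: since each arc of $A_{i}$ lies in $\cA_{0}$, the intersection $A_{i}\cap \cO$ coincides with the orbit $\cO_{(i)}$ of $\alpha_{i}(0)$. The hypothesis $A_{i}\cap A_{j}=\emptyset$ for $i\neq j$ therefore forces the orbits $\cO_{(i)}$ and $\cO_{(j)}$ to be distinct. Consequently the endpoints of $(h^{n}\alpha_{i})^{\sharp}$, namely $h^{n}\alpha_{i}(0)$ and $h^{n+1}\alpha_{i}(0)$, uniquely determine the pair $(i,n)$; since any isotopy relative to $\cO$ fixes every point of $\cO$, different pairs give non-isotopic arcs.

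Lemma~\ref{lemm.straightening}(1) then provides $f \in \homeo^{+}_{0}(\bbR^{2};\cO_{1},\dots,\cO_{r})$ such that
\[
f\bigl(h((h^{n}\alpha_{i})^{\sharp})\bigr) = (h^{n+1}\alpha_{i})^{\sharp}
\]
for every $i$ and $n$; setting $h'=fh$ yields a homeomorphism isotopic to $h$ relative to $\cO$ which maps each $(h^{n}\alpha_{i})^{\sharp}$ to $(h^{n+1}\alpha_{i})^{\sharp}$ and hence preserves each $A_{i}$ as required. The one step I expect to be the main (though still mild) obstacle is the verification of non-isotopy across distinct streamlines, as this is where the full strength of the disjointness hypothesis is used; everything else is a routine extension of the $r=1$ case already spelled out in the text.
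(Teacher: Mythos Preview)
Your proof is correct and follows exactly the approach the paper intends: the paper's own argument for a single streamline is given just before the corollary, and the statement of the corollary is accompanied only by the remark that ``the construction is easily generalized,'' which is precisely what you have spelled out. Your verification of the non-isotopy condition via the distinctness of the orbits $\cO_{(i)}$ is the right way to make that step explicit.
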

(This kind of argument is very common in homotopy Brouwer theory; more generally, we will call ``straightening principle'' every use of the first or the second point of the above lemma).

The beginning of section 3 in~\cite{handel1999fixed} provides a construction of a hyperbolic structure $\cH$ in $\bbR^{2} \setminus \bbZ$ for which the translation $T$ is an isometry. Now let $\cO_{1}, \dots, \cO_{r}$ be distinct orbits of $T$. By considering the projections of these orbits in the annulus $\bbR^{2}/T$ on the one hand, and the projection of $\bbZ$ in the annulus $\bbR^{2}/T^{r}$ on the other hand, one can find a map $\Psi$ such that $\Psi T \Psi^{-1} = T^{r}$ and $\Psi(\cO_{1} \cup \cdots \cup \cO_{r}) = \bbZ$. Then the inverse image of $\cH$ under $\Psi$ is a hyperbolic structure on the complement of $\cO_{1} \cup \cdots \cup \cO_{r}$ for which $T$ is an isometry.
Here is one application of this construction. Let $\alpha$ be a homotopy translation arc for the translation $T$ relative to the orbits $\cO_{1}, \dots, \cO_{r}$. Since $T$ is an isometry, the image of a geodesic is a geodesic, and in particular we get the relations $(T^{n} \alpha)^{\sharp} = T^{n} (\alpha^{\sharp})$ for every $n$.
Thus the concatenation of all the $T^{n} (\alpha^{\sharp})$'s is a proper geodesic homotopy streamline which is invariant under $T$. In particular we see that every homotopy translation arc for $T$ is homotopic to a translation arc for $T$. We will take advantage of this remark in the proof of Proposition~\ref{prop.centralisateur2}.

We end this subsection by a lemma which gives the essential property of flow classes, and which will be useful in section~\ref{section.additivity}. 
\begin{lemm}\label{lem.flow-streamlines}
The Brouwer mapping class $[h;\cO_{1}, \dots , \cO_{r}]$ is a fixed point free flow class if and only if it admits a family of pairwise disjoint proper geodesic homotopy streamlines whose union contains all the $\cO_{i}$'s.
\end{lemm}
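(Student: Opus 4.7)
The plan is to prove the two implications separately, in each case using flow trajectories as the bridge between a fixed point free flow and a family of disjoint geodesic streamlines.

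For the direction ``fixed point free flow class $\Rightarrow$ disjoint streamlines'', I would pick $h_{1}$ in the mapping class which is simultaneously fixed point free and the time-one map of a flow $(h_{t})$. The first step is to show that the flow itself has no equilibria and no periodic orbit: an equilibrium would be a fixed point of $h_{1}$, and a periodic orbit of the flow is a Jordan curve in the plane bounding a disk which, by the topological Poincar\'e--Bendixson argument (or equivalently by Brouwer's theorem applied to the Poincar\'e return map), must contain an equilibrium. Hence every trajectory $L_{i}:=\{h_{t}(x_{i}):t\in\bbR\}$ is a properly embedded topological line, the $L_{i}$ are pairwise disjoint, and each contains $\cO_{i}$. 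The subarc $\alpha_{i}\subset L_{i}$ running from $x_{i}$ to $h_{1}(x_{i})$ is a classical translation arc for $h_{1}$, so its iterates $h_{1}^{n}\alpha_{i}$ are literally disjoint except at common endpoints; in particular $\alpha_{i}$ is a homotopy translation arc. Properness of $L_{i}$ makes the sequence $(h_{1}^{n}\alpha_{i})$ proper, hence homotopically proper, so Lemma~\ref{lemm.straightening}(3) turns the concatenation of the geodesic representatives $(h_{1}^{n}\alpha_{i})^{\sharp}$ into a proper geodesic homotopy streamline $A_{i}$. Disjointness of the $L_{i}$ makes the different arcs pairwise homotopically disjoint; by minimal intersection of geodesics their geodesic representatives meet only inside $\cO$, but $\cO_{i}\cap\cO_{j}=\emptyset$, so $A_{i}\cap A_{j}=\emptyset$ for $i\neq j$, while $\cO_{i}\subset A_{i}$ by construction.

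For the converse I would start from the streamlines $A_{1},\dots,A_{r}$. Corollary~\ref{coro.straightening} supplies $h'$ isotopic to $h$ relative to $\cO$ with $h'(A_{i})=A_{i}$ for every $i$. An inductive Schoenflies-type argument applied to the pairwise disjoint proper topological lines $A_{i}$ produces an orientation preserving homeomorphism $\Psi$ of the plane sending each $A_{i}$ to $\bbR\times\{i\}$ and each $\cO_{i}$ to $\bbZ\times\{i\}$. After conjugating by $\Psi$ and performing a further isotopy relative to $\cO$ supported near $\bigcup \bbR\times\{i\}$, I may assume that $\Psi h'\Psi^{-1}$ restricts on each horizontal line to a translation $(x,i)\mapsto(x+\epsilon_{i},i)$ for some $\epsilon_{i}\in\{\pm 1\}$. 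I then build a flow $(\phi_{t})$ on $\bbR^{2}$ piece by piece: the translation flow of speed $\epsilon_{i}$ on $\bbR\times\{i\}$; the horizontal translation flow on each half-plane and on each strip whose two bounding lines translate in the same direction; and on each ``Reeb'' strip, where the two boundary speeds are opposite, an explicit nowhere vanishing vector field such as $(1-2(y-i),\,(y-i)(1-(y-i)))$ in local coordinates, which matches the boundary speeds and whose $y$-coordinate is strictly monotone in the interior, guaranteeing that its time-one map has no fixed points. The resulting global time-one $\phi_{1}$ is fixed point free, agrees with $\Psi h'\Psi^{-1}$ on $\bigcup\bbR\times\{i\}$, and on each complementary region differs from it by a homeomorphism fixing the boundary pointwise; the Alexander trick applied in the compactification of each strip or half-plane gives isotopies to the identity relative to the boundary, which glue into an isotopy of $\phi_{1}$ to $\Psi h'\Psi^{-1}$ relative to $\Psi(\cO)$. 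Pulling back by $\Psi$ exhibits $[h;\cO_{1},\dots,\cO_{r}]$ as a fixed point free flow class.

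The main obstacle is the converse direction, and within it two technical points. The first is producing the Reeb-type vector field: a naive translation flow would have interior fixed points in strips with opposite boundary speeds, so one really has to choose a non-vanishing field with both the correct boundary speeds and no interior recurrence. The second is justifying that modifying $h'$ inside each complementary region (while preserving the boundary values) does not alter the mapping class relative to $\cO$; this reduces via the Alexander trick to the contractibility of the group of homeomorphisms of a closed strip or half-plane fixing the boundary pointwise, which is classical but requires a careful compactification argument.
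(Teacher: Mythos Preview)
Your proposal has genuine gaps in both directions.

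\textbf{Forward direction.} You assert that the trajectories $L_{i}$ are pairwise disjoint. This is false in general: nothing prevents two distinct $h$-orbits $\cO_{i},\cO_{j}$ from lying on the \emph{same} flow trajectory. For example, take $h=T$ the unit translation with its obvious flow, and let $\cO_{1}=\bbZ\times\{0\}$ and $\cO_{2}=(\bbZ+\tfrac12)\times\{0\}$; then $L_{1}=L_{2}=\bbR\times\{0\}$. In that situation a single non-generalized homotopy translation arc cannot produce a streamline containing both orbits, and you are short one disjoint streamline. The paper explicitly singles out this ``bad case'' and constructs, by hand in a neighbourhood of the common trajectory, a separate homotopy translation arc for each orbit on it so that the resulting streamlines are disjoint (see the discussion around Figure~\ref{figure.modifier-flot}). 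Your argument needs an analogous patch.

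\textbf{Converse direction.} The step ``an inductive Schoenflies-type argument produces $\Psi$ sending each $A_{i}$ to $\bbR\times\{i\}$'' is wrong for $r\geq 3$. Among three parallel horizontal lines, the middle one separates the other two; but there exist configurations of three pairwise disjoint topological lines in the plane in which \emph{no} line separates the other two (exactly the configuration of Figure~\ref{figure.trois-trajectoires}). Since ``separates'' is a homeomorphism invariant, such a triple cannot be sent to parallel lines, and your whole strip-by-strip construction of the flow collapses. The paper avoids this by proving a general extension lemma (Lemma~\ref{lemma.flow}) that works component-by-component on the complement of $\bigcup A_{i}$: each complementary region is end-compactified to a disk minus finitely many boundary points, regions with more than three ends are subdivided by added chords, and then model fixed-point-free flows on a disk minus one, two, or three boundary points are glued in. Your Reeb-strip construction is essentially the two-end model, but you are missing the one-end and three-end models and the reduction that makes them sufficient. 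The Alexander-trick step you outline is correct in spirit and corresponds to the paper's Lemma~\ref{lemma.Alexander}, but it only becomes applicable once the flow has been built on every complementary region, not just on strips.
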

\begin{proof}
Let us assume that $[h;\cO_{1}, \dots , \cO_{r}]$ is a Brouwer mapping class which admits a family of pairwise disjoint proper geodesic homotopy streamlines $A_{1}, \dots, A_{r}$ with  $\cO_{i}$ included in $A_{i}$ for every $i$. 
The above corollary provides an element $h'$ in the mapping class of $h$ such that $h'(A_{i}) = A_{i}$ for every $i$.
Since $h'$ coincides with $h$ on $\cO_{i}$, it has no fixed point on $A_{i}$.
The fact that $h$ is a flow class now comes from Lemmas~\ref{lemma.flow} and~\ref{lemma.Alexander} below. The first Lemma provides a (fixed point free) time one map of a flow  $h''$ that coincides with $h'$ on each $A_{i}$. The second Lemma implies that $h''$ belongs to the mapping class of $h'$, and thus also to the mapping class of $h$.

Now let us assume  that the Brouwer mapping class $[h;\cO_{1}, \dots , \cO_{r}]$ is a flow class: there exists a flow $(h_{t})_{t \in \bbR}$  such that $h_{1}$ is isotopic to $h$ relative to the union of the $\cO_{i}$'s. Consider the trajectory under the flow of a point $x \in \cO_{i}$ for some $i$: it is the image of the real line under the map $t \mapsto h_{t}(x)$. Since $h_{1}$ is isotopic to $h$ relative to the union of the $\cO_{i}$'s, we have $h_{n}(x) = h^n(x)$ for every integer $n$, and since the sequence $(h^n(x))_{n \in \bbZ}$ tends to infinity when $n$ tends to $\pm \infty$, this implies that the trajectory of $x$ under the flow is a topological line.

First assume that the $\cO_{i}$'s belong to distinct trajectories of the flow.
Choose for each $i$ a point $x_{i}$ on $\cO_{i}$, and define a curve $\alpha_{i}$ by letting $\alpha_{i}(t) = h_{t}(x_{i})$ for $t \in [0,1]$.
Then the $\alpha_{i}$'s are proper homotopy translation arcs for  $[h;\cO_{1}, \dots , \cO_{r}]$, and they generate pairwise disjoint proper homotopy streamlines.

\begin{figure}[h]
\begin{center}
\begin{tikzpicture}[scale=0.7] \footnotesize
\def\hauteur{7.5}
\def\bas{0.5}
\begin{scope}[xshift=-5cm,xscale=2]
\clip (-2,\bas) rectangle (2,\hauteur) ;

\draw  (0,0) -- (0,10) ;
\def \control {20} 
\foreach \translation in {0, -2, -4 , -6}
 	{\draw  (0.1,\translation) .. controls +(up:\control) and +(up:\control) .. (4.8,\translation) ;}
\foreach \translation in {0, -2, -4 , -6}
 	{\draw  (-0.1,\translation) .. controls +(up:\control) and +(up:\control) .. (-4.8,\translation) ;}
\foreach \p in {1,4,7}
	{\node at (0,\p) {$\bullet$} ; }
\foreach \p in {2,5}
	{\node at (0,\p) {$\circ$} ; }
\foreach \p in {3,6}
	{\node at (0,\p) {$\times$} ; }
\end{scope}

\begin{scope}[xshift=5cm,xscale=2]
\clip (-2,\bas) rectangle (2,\hauteur) ;
\draw[very thin,dashed]  (0,0) -- (0,10) ;
\def \control {20} 
\def\courbure{0.3}
\def\petitecourbure{0.1}
\foreach \translation in {0, -2, -4 , -6}
 	{\draw[very thin,dashed]  (0.1,\translation) .. controls +(up:\control) and +(up:\control) .. (4.8,\translation) ;}
\foreach \translation in {0, -2, -4 , -6}
 	{\draw[very thin,dashed]  (-0.1,\translation) .. controls +(up:\control) and +(up:\control) .. (-4.8,\translation) ;}
\foreach \p in {-2,1,4,7}
	{\node at (0,\p) {$\bullet$} ; 
	\draw[thick] (0,\p) .. controls +(-\courbure,1) and +(-\courbure,-1) .. (0,\p+3);
	}
\foreach \p in {-1,2,5,8}
	{\node at (0,\p) {$\circ$} ; 
	\draw[thick] (0,\p) .. controls +(\courbure,1) and +(\courbure,-1) .. (0,\p+3);
	}
\foreach \p in {0,3,6,9}
	{\node at (0,\p) {$\times$} ; 
		\draw[thick] (0,\p) .. controls +(\petitecourbure,0.5) and +(\petitecourbure,-0.5) .. (0,\p+1.5);
		\draw[thick] (0,\p+1.5) .. controls +(-\petitecourbure,0.5) and +(-\petitecourbure,-0.5) .. (0,\p+3);
	}
\end{scope}

\end{tikzpicture}
\end{center}
\caption{\label{figure.modifier-flot} Constructing proper homotopy streamlines in the bad case}
\end{figure}
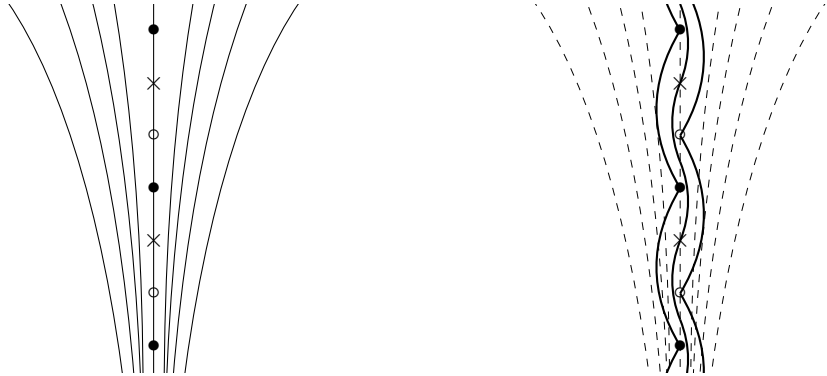

In the bad case when several $\cO_{i}$'s belong to the same trajectory $\Gamma$ of the conjugated flow, one can proceed as follows. 
The Schoenflies theorem provides a homeomorphism of the plane that sends $\Gamma$ to a vertical straight line.
Thus, up to a change of coordinates, we may assume that $\Gamma$ is a vertical straight line. Then Figure~\ref{figure.modifier-flot} indicates how to construct a homotopy translation arc for each of the $\cO_{i}$'s contained $\Gamma$, in such a way that they generate pairwise disjoint homotopy streamlines. Up to isotopy relative to the union of the $\cO_{i}$'s, the whole construction takes place in an arbitrarily small neighborhood of $\Gamma$. This entails that our homotopy translation arcs are backward and forward proper. Thus the homotopy streamlines are proper. Furthermore, the homotopy translation arcs associated to $\cO_{i}$'s included in distinct trajectories of the flow will still be pairwise disjoint.
\end{proof}

\begin{lemm}\label{lemma.flow}
Let $\cF$ be a finite family of pairwise disjoint topological lines in the plane. Let $F$ be the union of the elements of $\cF$. Let $h':F \to F$ be a fixed point free homeomorphism that preserves each $F$ in $\cF$.
Then there exists a fixed point free homeomorphism $h''$ of the plane which extends $h'$ and which is the time one map of a flow.
\end{lemm}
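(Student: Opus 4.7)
The plan is to straighten the family $\cF$ by a Schoenflies-type argument, normalize $h'$ on each line to a standard unit translation, then build a flow region by region on the complementary open sets.

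First, by iterated application of the Schoenflies theorem (any properly embedded line in the plane is ambiently isotopic to a horizontal line, and a finite disjoint system can be straightened one line at a time in the complement of the lines already straightened), there exists an orientation-preserving homeomorphism $\Phi$ of $\bbR^{2}$ sending each $F_{i} \in \cF$ to a horizontal line $L_{i} = \bbR \times \{c_{i}\}$, with $c_{1} < \cdots < c_{k}$. Being the time-one map of a flow is conjugacy-invariant, so I replace $h'$ by $\Phi h' \Phi^{-1}$ and assume from now on that the lines are horizontal. On each $L_{i}$, the restriction $h'|_{L_{i}}$ is a fixed-point-free self-homeomorphism of a line; an orientation-reversing such map must have a fixed point by the intermediate value theorem, so $h'|_{L_{i}}$ preserves orientation and is topologically conjugate to a translation. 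A further conjugation supported in a thin horizontal neighborhood of each $L_{i}$ lets me assume $h'|_{L_{i}}$ is exactly $(x, c_{i}) \mapsto (x + \epsilon_{i}, c_{i})$ with $\epsilon_{i} \in \{+1, -1\}$. The flow $\psi^{i}_{t}(x, c_{i}) = (x + t \epsilon_{i}, c_{i})$ on $L_{i}$ then has time-one map $h'|_{L_{i}}$.

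Next I build a fixed-point-free flow on each of the $k+1$ closed complementary regions, extending the prescribed boundary flows. For an outer half-plane (above $L_{k}$ or below $L_{1}$) I use the horizontal translation flow $(x, y) \mapsto (x + t \epsilon_{*}, y)$ matching the sign of the adjacent boundary. For a bounded strip $\bbR \times [c_{i}, c_{i+1}]$ I distinguish two cases. If $\epsilon_{i} = \epsilon_{i+1}$, a horizontal flow of varying speed $(x, y) \mapsto (x + t\, a(y), y)$ works, where $a : [c_{i}, c_{i+1}] \to \bbR$ is continuous, nowhere zero, with boundary values $\epsilon_{i}$ and $\epsilon_{i+1}$ (such $a$ exists since they share a sign). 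If $\epsilon_{i} = -\epsilon_{i+1}$, I use a Reeb-type flow, for example the flow of $X(x, y) = \left(\epsilon_{i}(1 - 2y'),\, \sin(\pi y')\right)$ with $y' = (y - c_{i})/(c_{i+1} - c_{i})$: its two components never vanish simultaneously, it restricts to the prescribed translations on both boundary lines, and its interior orbits are U-shaped curves asymptotic to the two boundary lines with $|x| \to \infty$ at both ends, hence are not closed.

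Finally, since each region flow restricts on its boundary to the already-chosen $\psi^{i}_{t}$ on $L_{i}$, the pieces glue to a continuous flow $\psi_{t}$ on $\bbR^{2}$, and I set $h'' := \psi_{1}$. Then $h''|_{L_{i}} = h'|_{L_{i}}$ for every $i$, so $h''$ extends $h'$, and $h''$ is fixed point free because on each $L_{i}$ it equals the fixed-point-free map $h'|_{L_{i}}$ while on each complementary region the underlying flow has no fixed points and no closed orbits, so its time-one map cannot have fixed points either. The hard part is case (b): any horizontal interpolation between translations of opposite signs must vanish somewhere by the intermediate value theorem, so the non-horizontal Reeb-type construction is essential; everything else is routine assembly and a continuity check at the boundary lines (where the vector field equals $(\epsilon_{i}, 0)$ from both sides).
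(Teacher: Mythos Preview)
Your first step contains a genuine error: not every finite family of pairwise disjoint topological lines in the plane can be simultaneously straightened to horizontal lines. A counterexample is the ``tripod'' configuration of three lines $F_{1},F_{2},F_{3}$ in which no one of them separates the other two (this is exactly the non-separating case pictured in the paper's Figure~\ref{figure.trois-trajectoires}). For three horizontal lines the middle one always separates the outer two, so these two configurations are not ambient-homeomorphic; equivalently, the Schoenflies--Homma theorem in the appendix only allows you to send one family onto another when the right/left combinatorics match. Your iterated Schoenflies argument breaks down already here: after straightening $F_{1}$ and $F_{2}$ to $\bbR\times\{0\}$ and $\bbR\times\{1\}$, the third line $F_{3}$ would have to lie either in the strip (forcing it to separate the other two) or in one of the half-planes (forcing one of $F_{1},F_{2}$ to separate the other two), contradicting the tripod hypothesis.

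Concretely, the complement of the tripod has one region bounded by all three lines, whereas the complement of any family of horizontal lines has only regions bounded by one or two lines. So your region-by-region construction, which only provides model flows for half-planes and strips, misses an entire case. The paper handles this by end-compactifying the closure of each complementary region $U$ to a disk with a finite set $E\subset\partial\bbD^{2}$ removed, providing model flows on $\bbD^{2}\setminus E$ for $|E|=1,2,3$ (the $|E|=3$ case being precisely what you lack), and, for $|E|>3$, first subdividing $U$ by auxiliary chords so that every piece has at most three boundary lines. Your half-plane and strip constructions correspond exactly to the paper's $|E|=1$ and $|E|=2$ models, so the fix is to add the three-boundary model and drop the global straightening in favor of a local, region-by-region one.
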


The proof will use the following construction. Under the hypotheses of the lemma, let $U$ be a connected component of the complementary set of $F$ in the plane. The end compactification of the closure of $U$ is homeomorphic to the unit disk (see the corollary in appendix~\ref{sec.schoenflies-homma}): there exist a finite set $E$ in the boundary of the closed unit disk $\bbD^2$, and a homeomorphism $\Phi$ between  $\bbD^2 \setminus E$ and the closure of $U$ (each interval of the complement of $E$ in the boundary of $\bbD^2$ is sent by $\Phi$ onto an element of $\cF$ that is part of the boundary of $U$).

We will also need the following ``model flows'' on the closed unit disk $\bbD^2$. Let $E_{1}, E_{2}, E_{3}$ be subsets of the boundary $\partial \bbD^2$ containing respectively one, two and three points.
\begin{itemize}
\item There exists a fixed point free flow on $\bbD^2 \setminus E_{1}$.
\item There exist two fixed point free flows on $\bbD^2 \setminus E_{2}$, such that for the first one the points on the two components of $\partial \bbD^2 \setminus E_{2}$ flow in the same direction, whereas for the second one they flow in opposite directions.
\item There exist two fixed point free flows on $\bbD^2 \setminus E_{3}$, such that for the first one the points on the three components of $\partial \bbD^2 \setminus E_{2}$ flow in the same direction, whereas for the second one the directions are the same on two components and opposite in the last component.
\end{itemize}
The  flow on $\bbD^2 \setminus E_{1}$ is conjugate to a translation flow on a closed half-plane. The flows on $\bbD^2 \setminus E_{2}$ are conjugate respectively to the translation flow and to the Reeb flow on a closed strip. The details of the constructions are left to the reader.


\begin{proof}[Proof of Lemma~\ref{lemma.flow}]
Let $U$ be any connected component of the complement of $F$. Consider the set $E \subset \partial \bbD^2$ as above. If $E$ contains more than three points, we choose some point $x_{0}$ in $E$  and join $x_{0}$ to every other point of $E$ by a chord of the disk. The images under the homeomorphism $\Phi$ of these chords are topological lines which are disjoint from the elements of the family $\cF$. We add these lines to  $\cF$ and extend $h'$ on each of the new lines in an arbitrary fixed point free way. We make this construction for every connected component $U$ of $\bbR^2 \setminus F$, and still denote by $\cF$ the extended family and by $h'$ the extended homeomorphism. Note that every connected component $U$ of the complement of (the new) $F$ have at most three boundary components. 

Since every fixed point free homeomorphism of the real line is conjugate to a translation, we can find a flow $(h''_{t})$, defined on $F$ and preserving each component of $F$, such that $h''_{1} = h'$. It remains to construct an extension of this flow to each connected component $U$ of the complement of $F$. We note that the extension can be made independently on each such $U$.

Now let $\Phi$ be a homeomorphism between the closure of $U$ and $\bbD^2 \setminus E$ provided by the end-compactification.
We note that the cardinal of $E$ coincides with the number of boundary components of $U$, which equals one, two or three.
Up to conjugating by a homeomorphism of the disk that permutes the points of $E$ (and which may reverse the orientation), we can assume that on each component of $\partial \bbD^2 \setminus E$ the flow $(\Phi h''_{t \mid \partial U} \Phi^{-1})$ flows in the same direction as one of the above ``model flows'' $(m_{t})$. Since all the fixed point free flows of an open interval are conjugate, and each homeomorphism of $\partial \bbD^2$ extends to the disk, a further conjugacy ensures that $(\Phi h''_{t \mid \partial U} \Phi^{-1}) = (m_{t})$ on $\partial \bbD^2$. Now we may extend the flow $(h''_{t})$ on $U$ by the formula $(\Phi^{-1} m_{t} \Phi)$.
%
%
\end{proof}

The next Lemma is a variation on Alexander's trick (\cite{alexander1923deformation}).

\begin{lemm}\label{lemma.Alexander} 
Let $\cF$ be a finite family of pairwise disjoint topological lines in the plane.
Let $h_{0}$ be an orientation preserving homeomorphism of the plane. 
Let $H_{h_{0}}$ be the space of orientation preserving homeomorphisms that coincide with $h_{0}$ on the union of the elements of $\cF$. Then $H_{h_{0}}$ is arcwise connected.
\end{lemm}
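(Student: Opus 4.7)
The plan is to deduce the lemma from the classical Alexander trick applied on each connected component of $\bbR^{2} \setminus F$ (where $F$ denotes the union of the elements of $\cF$), glued together along $F$.

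First I would reduce to the case $h_{0} = \mathrm{Id}$. Given $g_{1}, g_{2} \in H_{h_{0}}$, the composition $g_{2}^{-1} g_{1}$ is orientation preserving and fixes $F$ pointwise (since $g_{2} = h_{0} = g_{1}$ on $F$). If $(p_{t})$ is a path from $g_{2}^{-1} g_{1}$ to the identity among orientation preserving homeomorphisms fixing $F$ pointwise, then $(g_{2} p_{t})$ is a path in $H_{h_{0}}$ from $g_{1}$ to $g_{2}$: indeed, for every $x \in F$, $g_{2} p_{t}(x) = g_{2}(x) = h_{0}(x)$. Thus it suffices to prove that the space $H_{F}$ of orientation preserving homeomorphisms of the plane fixing $F$ pointwise is arcwise connected, by constructing, for every $f \in H_{F}$, an isotopy $(f_{t})_{t \in [0,1]}$ in $H_{F}$ from $f_{0} = f$ to $f_{1} = \mathrm{Id}$.

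Now fix $f \in H_{F}$. Since $F$ is a finite union of pairwise disjoint topological lines, the complement $\bbR^{2} \setminus F$ has finitely many connected components, each of which is invariant under $f$. For every such component $U$, the corollary in appendix~\ref{sec.schoenflies-homma} provides a homeomorphism $\Phi_{U} : \bbD^{2} \setminus E_{U} \to \overline{U}$, where $E_{U}$ is a finite subset of $\partial \bbD^{2}$. The restriction $f|_{\overline{U}}$ fixes $\partial U$ pointwise, so the conjugate $\Phi_{U}^{-1} f|_{\overline{U}} \Phi_{U}$ extends continuously (by the identity on $E_{U}$) to an orientation preserving homeomorphism $\tilde f_{U} : \bbD^{2} \to \bbD^{2}$ fixing $\partial \bbD^{2}$ pointwise. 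To $\tilde f_{U}$ I apply the classical Alexander trick, obtaining a continuous isotopy $(\tilde f_{U,t})_{t \in [0,1]}$ from $\tilde f_{U,0} = \tilde f_{U}$ to $\tilde f_{U,1} = \mathrm{Id}$, among orientation preserving homeomorphisms of $\bbD^{2}$ each of which fixes $\partial \bbD^{2}$ pointwise. Pulling back by $\Phi_{U}$ yields an isotopy $(f_{U,t})$ of $\overline{U}$, fixing $\partial U$ pointwise at every time.

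I then glue all these isotopies: define $f_{t}(x) = f_{U,t}(x)$ if $x \in \overline{U}$ and $f_{t}(x) = x$ if $x \in F$. For each fixed $t$, the map $f_{t}$ is well-defined and bijective, orientation preserving, with an inverse obtained by the same procedure applied to $f^{-1}$. Each $f_{t}$ lies in $H_{F}$, and $f_{0} = f$, $f_{1} = \mathrm{Id}$.

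The main technical step is to check that the map $(t,x) \mapsto f_{t}(x)$ is jointly continuous; this is where I expect the only real work. Continuity at a point $(t,x)$ with $x \in U$ follows immediately from the continuity of the Alexander isotopy on the compact disk $\bbD^{2}$. For $(t,x)$ with $x \in F$, consider a sequence $(x_{n}, t_{n}) \to (x, t)$; by passing to a subsequence I may assume all $x_{n}$ lie either in $F$ (trivial) or in a single component $U$ with $x \in \partial U$. In the latter case, $\Phi_{U}^{-1}(x_{n})$ converges in $\bbD^{2}$ to a point $y$ of $\partial \bbD^{2} \setminus E_{U}$, and since each $\tilde f_{U,t_{n}}$ fixes $\partial \bbD^{2}$ pointwise, the compactness of $\bbD^{2}$ combined with joint continuity of the Alexander isotopy gives $\tilde f_{U,t_{n}}(\Phi_{U}^{-1}(x_{n})) \to y$. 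Applying $\Phi_{U}$ yields $f_{U,t_{n}}(x_{n}) \to x$, as required. This continuity argument concludes the proof.
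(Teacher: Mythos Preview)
Your proof is correct and follows essentially the same route as the paper: reduce to $h_{0}=\mathrm{Id}$, then on each complementary component $U$ use the end-compactification $\overline U \simeq \bbD^{2}\setminus E_{U}$ to transport the problem to the closed disk, where Alexander's trick applies. The only cosmetic differences are that the paper packages the continuity issue by observing that $h\mapsto\bar h$ is a homeomorphism between the relevant function spaces (via naturality of the end compactification) rather than checking joint continuity by hand, and that it isotopes the components one at a time instead of simultaneously.
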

\begin{proof}
The map $h \mapsto h_{0}^{-1}h$ is a homeomorphism between $H_{h_{0}}$ and $H_{\mathrm{Id}}$, thus it suffices to prove the Lemma when $h_{0}$ is the identity.
Denote by $F$ the union of the topological lines that belong to $\cF$, and let $U$ be a connected component of the complementary set of $F$ in the plane. Let $\Phi$ be a homeomorphism between  $\bbD^2 \setminus E$ and the closure of $U$, where $E$ is a finite subset of $\partial \bbD^2$ as above.
If $h$ is an element of $H_{\mathrm{Id}}$ then $\Phi^{-1} h \Phi$ extends to a homeomorphism $\bar{h}$ of the disk which is the identity on the boundary (in other words, we are using the naturality of Freudenthal's end compactification of the closure of $U$). The map $h \mapsto \bar{h}$ is a homeomorphism between the space of homeomorphisms of the closure of $U$  that are the identity on the boundary of $U$ and the space of homeomorphisms of the disk that are the identity on the boundary of the disk. The latter is arcwise connected (and even contractible, see~\cite{alexander1923deformation}).
Thus we may find an isotopy in $H_{\mathrm{Id}}$ from $h$ to a homeomorphism that is the identity on $U$. We proceed independently on each connected component of the complement of $F$ to get an isotopy from $h$ to the identity.
 (Note that the proof actually shows the the space $H_{h_{0}}$ is contractible).
\end{proof}
We note that both lemmas still hold, with the same proofs, when the family $\cF$ is only supposed to be locally finite. We will not make use of this remark.

\subsection{Change of curve}
\label{subsection.curve}
In this subsection we prove Lemma~\ref{lemma.curve}.

\begin{proof}
We assume the hypotheses of the Lemma.
The space of continuous curves in the plane joining two given points is easily seen to be connected. Since the index of $h$ along a curve joining a point of $\bbZ_1$
to a point of $\bbZ_2$ is half an integer, the usual continuity argument yields that two curves having the same end-points give the same index. 

It remains to see that different end-points still give the same index. Let $\gamma_{0}$ be any curve from $(0,1)$ to $(0,2)$. Let $n_{0}, n_{1} \in \bbZ$, and choose some curve $\alpha$ from $(n_{0},1)$ to $\gamma_{0}(0)$ and some curve $\beta$ from $\gamma_{0}(1)$  to $(n_{1},2)$. We are left to check that the index along the concatenation $\alpha \star \gamma_{0} \star \beta$ is equal to the index along $\gamma_{0}$. By additivity of the index, it is enough to prove that the indices along $\alpha$ and $\beta$ vanish. Let us take care of $\alpha$ (the case of $\beta$ is obviously symmetric). Since $h$ is isotopic relative to $\bbZ_{1}\cup \bbZ_{2}$ to $T,T^{-1}, R$ or $R^{-1}$, it is also isotopic to one of these maps relative to $\bbZ_{1}$.
Furthermore $R=T$ on the line $\bbR\times \{1\}$, thus $R$ and $T$ are isotopic relative to this line (Alexander's trick, see Lemma~\ref{lemma.Alexander}). Finally $h$ is isotopic relative to $\bbZ_{1}$ to $T$ or $T^{-1}$. Thus the argument reduces to the following Lemma. 

\begin{lemm}\label{lemma.index-same-orbit}
Let $h \in \homeo^{+}(\bbR^{2}; \bbZ_1)$ be a Brouwer homeomorphism whose mapping class relative to $\bbZ_1$ is equal to $[T]^{\pm 1}$.
Then the index  of $h$ along any curve joining two points of the orbit $\bbZ_1$ vanishes. 
\end{lemm}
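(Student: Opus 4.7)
The plan is to reduce to a specific translation arc of $h$, then straighten the corresponding invariant topological line to the standard horizontal line by an ambient isotopy fixing $\bbZ_{1}$, and conclude by continuity in the isotopy parameter.

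First I would use the first continuity argument from the proof of Lemma~\ref{lemma.curve}, specialised to the single orbit $\bbZ_{1}$ (the endpoint vectors $h(p)-p$ and $h(q)-q$ both equal $(\pm 1,0)$, so the winding is an integer, and the space of curves between two fixed points is connected), to reduce to showing $I(h,\gamma) = 0$ for some single curve $\gamma$ from $(0,1)$ to $(1,1)$; additivity of the winding over concatenations of curves then gives the full lemma. Then, by classical Brouwer plane translation theory (see for example~\cite{guillou1994brouwer}), I choose $\alpha$ to be a translation arc from $(0,1)$ to $(1,1)$ for $h$ whose $h$-iterates are pairwise disjoint except at common endpoints of consecutive arcs, so that $L := \bigcup_{n\in\bbZ} h^{n}(\alpha)$ is a topological line containing $\bbZ_{1}$ with $h(L)=L$. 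Parameterising $L$ by an embedding $\psi:\bbR\to L$ with $\psi(n)=(n,1)$ and $\psi([n,n+1]) = h^{n}(\alpha)$, the identity $h(\psi(t))=\psi(t+1)$ identifies $I(h,\alpha)$ with the winding of $t\mapsto \psi(t+1)-\psi(t)$ for $t\in[0,1]$, a well-defined integer since $\psi$ is injective and both endpoint vectors equal $(1,0)$.

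The main step is then to construct an ambient isotopy $(\phi_{s})_{s\in[0,1]}$ of $\bbR^{2}$ with $\phi_{0}=\mathrm{id}$, $\phi_{s}$ fixing $\bbZ_{1}$ pointwise for every $s$, and $\phi_{1}(L)=\bbR\times\{1\}$. This is a Schoenflies-type construction carried out arc by arc; the key topological input, and the step I expect to be the main obstacle, is that each sub-arc $\psi([n,n+1])$ of $L$ co-bounds with the segment $[n,n+1]\times\{1\}$ only regions of $\bbR^{2}$ that contain no other point of $\bbZ_{1}$, which follows from the properness of the embedded line $L$: if some such region contained $\psi(m)$ for $m\notin[n,n+1]$, the proper tail ray $\psi([m,+\infty))$ (or $\psi((-\infty,m])$) of $L$ would be trapped in that bounded region, a contradiction. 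Once this isotopy is in hand, the quantity $I_{s}$ defined as the winding of $t\mapsto \phi_{s}(\psi(t+1))-\phi_{s}(\psi(t))$ on $[0,1]$ is well-defined and integer-valued (because $\phi_{s}\circ\psi$ is an embedding and $\phi_{s}$ fixes $\bbZ_{1}$), depends continuously on $s$, and is therefore constant. At $s=1$ the map $\phi_{1}\circ\psi$ is a strictly increasing injective parameterisation of $\bbR\times\{1\}$ sending $n$ to $(n,1)$, so the vector $\phi_{1}(\psi(t+1))-\phi_{1}(\psi(t))$ always has positive first coordinate and zero second coordinate, giving $I_{1}=0$; thus $I(h,\alpha)=I_{0}=0$. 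The case $[h]=[T^{-1}]$ follows by applying the argument to $h^{-1}$, which is also Brouwer with $[h^{-1}]=[T]$ relative to $\bbZ_{1}$.
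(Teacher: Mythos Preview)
Your overall strategy matches the paper's: choose a translation arc for $h$, straighten it to a horizontal segment by an isotopy fixing $\bbZ_{1}$, and conclude by continuity. The reduction to a single curve, the existence of a translation arc, and the fact that the iterates of a translation arc form a proper embedded line $L$ are all fine (the last is indeed classical, e.g.\ Guillou's Th\'eor\`eme~3.3). The identification $I(h,\alpha)=\text{winding of }t\mapsto\psi(t+1)-\psi(t)$ is also correct.

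The gap is in your ``key topological input''. You claim that every bounded region co-bounded by $\psi([n,n+1])$ and $[n,n+1]\times\{1\}$ is free of points of $\bbZ_{1}$, arguing that otherwise a tail ray of $L$ would be trapped. But the boundary of such a region consists of pieces of $\psi([n,n+1])$ \emph{and} pieces of the segment $[n,n+1]\times\{1\}$. The tail ray $\psi([m,+\infty))$ is indeed disjoint from $\psi([n,n+1])$ by injectivity of $\psi$, but nothing prevents it from crossing $[n,n+1]\times\{1\}$ at non-integer points and escaping. In fact one can draw proper embedded lines through $\bbZ_{1}$ (in the correct order) for which the arc from $(0,1)$ to $(1,1)$ winds around $(2,1)$: the arc $\psi([2,3])$ then exits the enclosed region precisely by crossing $[0,1]\times\{1\}$. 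Such lines do not arise as translation-arc streamlines of Brouwer homeomorphisms, but your argument uses only properness of $L$, not that it is a streamline, so it cannot rule them out.

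What is actually needed here is the uniqueness of the isotopy class of homotopy translation arcs relative to a single orbit (Handel, \cite{handel1999fixed}, Corollary~6.3; this is the special case of Lemma~\ref{lemma.uniqueness-hta}). Since $[h;\bbZ_{1}]=[T]^{\pm1}$, the segment $[0,1]\times\{1\}$ is a homotopy translation arc; uniqueness forces your $\alpha$ to be isotopic to it relative to $\bbZ_{1}$, and likewise $h(\alpha)$ to $[1,2]\times\{1\}$. This is exactly how the paper proceeds. The paper then economises by straightening only the two arcs $\alpha$ and $h(\alpha)$ (via the straightening principle, Lemma~\ref{lemm.straightening}) rather than the whole line $L$; after straightening, the conjugated map sends $[0,1]\times\{1\}$ to $[1,2]\times\{1\}$ and the index is visibly zero. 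Your version, straightening all of $L$, would also work once the uniqueness input is supplied, but it is doing more than necessary.
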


\end{proof}

\begin{proof}[Proof of Lemma~\ref{lemma.index-same-orbit}]
We again refer to the additivity to reduce the Lemma to the case of a curve joining some point $(n,1)$ to the point $(n+1,1)$. The problem is invariant under translation, thus everything boils down to a curve joining $(0,1)$ to $(1,1)$. Note that it is enough to do this for some specific curve, since again by connectedness any two curves joining these two points give the same index.

According to classical Brouwer theory, we can find a \emph{translation arc}, that is, an injective curve $\gamma$ joining $(0,1)$ to $(1,1)$ and satisfying $\gamma \cap h(\gamma) = \{(1,1)\}$ (see~\cite{guillou1994brouwer}, Lemme 3.2 or~\cite{handel1999fixed}, Lemma 4.1). Furthermore, every translation arc is a homotopy translation arc (Th\'eor\`eme 3.3 in the former).
According to~\cite{handel1999fixed}, Corollary~6.3, there is a unique isotopy class, relative to $\bbZ_{1}$,  of homotopy translation arcs from $(0,1)$ to $(1,1)$
(this is also a special case of Lemma~\ref{lemma.uniqueness-hta} below). Since $h$ is isotopic relative to $\bbZ_{1}$ to $T$ or $T^{-1}$ the segment  $[0,1] \times \{1\}$ is a homotopy translation arc, and thus $\gamma$ is isotopic to this segment relative to $\bbZ_{1}$. The same argument shows that $h(\gamma)$ is isotopic to the segment $[1,2] \times \{1\}$.
We now apply the``straightening principle'' (point 1 of Lemma~\ref{lemm.straightening}) to get an isotopy $(\Psi_{t})_{t \in [0,1]}$ of elements in $\homeo^{+}_{0}(\bbR^{2}; \bbZ_{1})$ such that $\Psi_{0}$ is the identity while $\Psi_{1}$ satisfies $\Psi_{1}(\gamma) =  [0,1] \times \{1\}$ and $\Psi_{1}(h(\gamma)) =  [1,2] \times \{1\}$. By continuity we get $I(h,\gamma) = I(\Psi_{1} h \Psi_{1}^{-1}, [0,1] \times \{1\})$. This last number is  equal to zero, since $\Psi_{1} h \Psi_{1}^{-1}$ sends the segment $[0,1] \times \{1\}$ to the segment $[1,2] \times \{1\}$.
This completes the proof of the Lemma.
\end{proof}

\subsection{Centralizers of $[T]$ and $[R]$}
\label{subsection.centralizers}

\begin{proof}[Proof of proposition~\ref{prop.centralisateur2}]

The translation $T$ admits infinitely many isotopy classes of homotopy translation arcs sharing the same endpoints, while $R$ admits only one (see section~\ref{subsection.basic}). This property distinguishes the conjugacy classes of $[T]$ and $[R]$ in the mapping class group. 

To see that $[R]$ and $[R]^{-1}$ are not conjugate we will argue by contradiction. 
The proof is a ``homotopic version'' of the easier fact that there is no conjugacy between $R$ and $R^{-1}$ by a homeomorphism $\Psi$ that preserves orientation and globally fixes each line $\bbR \times \{1\}$ and $\bbR \times \{2\}$, which may be proved by contradiction as follows: such a homeomorphism would reverse the orientation on the line $\bbR \times \{1\}$, and since it preserves the orientation of the plane it would have to exchange the two half-planes delimited by this line, in contradiction with the preservation of $\bbR \times \{2\}$.

 Let $\alpha_{1} = [0,1] \times \{1\}, \alpha_{2} = [0,1] \times \{2\}$. These segments are homotopy translation arcs for $R$. Let $A_{1} = \bbR \times \{1\}, A_{2}= \bbR \times \{2\}$ be the proper homotopy streamlines generated by $\alpha_{1}, \alpha_{2}$. These  lines are oriented by the action of $R$, and each one is on  the left hand side of the other one with respect to this orientation. 
Assume by contradiction that the relation $[\Psi] [R] [\Psi]^{-1} = [R]^{-1}$ holds in the mapping class group, with $\Psi \in \homeo^{+}(\bbR^{2}; \bbZ_1,\bbZ_{2})$. Then $\Psi(\alpha_{1})$ is a homotopy translation arc for $R^{-1}$. Since $\Psi$ globally preserves $\bbZ_{1}$, the end-points of this arc are on $\bbZ_{1}$. Remember that there is a unique homotopy class of homotopy translation arcs for $R$ or $R^{-1}$, relative to $\bbZ_{1} \cup \bbZ_{2}$, joining a given point of $\bbZ_{1}$ to its image (\cite{franks2003periodic}, Lemma 8.7 (2)).
Thus there exists some $n_{0}$ such that $\Psi(\alpha_{1})$ is isotopic to $R^{n_{0}} (\alpha_{1}) = [n_{0},n_{0}+1]\times\{1\}$ with the opposite orientation. Up to composing $\Psi$ with $T^{-n_{0}}$ (which commutes with $R$), we may assume that $n_{0} = 0$.
Then from the relation $[\Psi] [R] [\Psi]^{-1} = [R]^{-1}$ we get that for every integer $n$, the arc $\Psi(R^n \alpha_{1})$ is isotopic to the arc $R^{-n} \alpha_{1}$ endowed with the opposite orientation. Up to composing $\Psi$ by a homeomorphism provided by the ``straightening principle'' (point 1 of lemma~\ref{lemm.straightening}), we may further assume that $\Psi(R^n \alpha_{1}) = R^{-n} \alpha_{1}$ for every $n$.
Thus $\Psi$ globally preserves the line $A_{1}$ while reversing its orientation. Since $\Psi$ preserves the orientation of the plane, it must send the half-plane on the left hand side of $A_{1}$ to its right hand side. But this contradicts the fact that $\Psi$ globally preserves $\bbZ_{2}$. The proof of the first point of Proposition~\ref{prop.centralisateur2} is complete.

The argument for the second point, namely that the mapping class of $T$ is conjugate to its inverse, has already been given (see the construction of $\Psi_{0}$ after the statement of the Proposition). 
We turn to the third point of the Proposition: we will prove that the centralizer of $[R]$ in the mapping class group is generated by $[T_{1}]$ and $[T_{2}]$.
The homeomorphism $T_{1} R T_{1}^{-1}$ coincides with $R$ on the union of the lines $\bbR \times \{1\}$ and $\bbR \times \{2\}$. 
Alexander's trick (Lemma~\ref{lemma.Alexander}) provides an isotopy from  $T_{1} R T_{1}^{-1}$ to $R$ relative to $\bbR \times \{1,2\}$, and in particular this is an isotopy relative to $\bbZ_1 \cup \bbZ_2$. In other words, the mapping class $[T_{1}]$ commutes with $[R]$. The same argument shows that $[T_{2}]$ commutes with $[R]$. It is easy to check that the subgroup generated by those two elements is free abelian.

Conversely, let $\Psi \in \homeo^{+}(\bbR^{2}; \bbZ_{1}, \bbZ_{2})$ such that $\Psi R \Psi^{-1}$ is isotopic to $R$, and let us show that $[\Psi]$ belongs to the subgroup generated by $[T_{1}]$ and $[T_{2}]$.
By uniqueness of the homotopy translation arcs, there exists $n_{1}, n_{2}$ such that, for every integer $n$, the arc $\Psi( R^n \alpha_{1})$ is isotopic to $R^{n+n_{1}} \alpha_{1}$, and 
$\Psi( R^n \alpha_{2})$ is isotopic to $R^{n+n_{2}} \alpha_{2}$. The ``straightening principle'' (point 1 of lemma~\ref{lemm.straightening}) provides a first isotopy relative to $\bbZ_{1} \cup \bbZ_{2}$ from $\Psi$ to a homeomorphism that coincides with $R^{n_{1}}$ on $A_{1}$ and with $R^{n_{2}}$ on $A_{2}$. Since 
$T_{1}^{n_{1}} T_{2}^{n_{2}}$ also coincides with $R^{n_{1}}$ on $A_{1}$ and with $R^{n_{2}}$ on $A_{2}$, Alexander's trick (Lemma~\ref{lemma.Alexander}) provides a second isotopy to $T_{1}^{n_{1}} T_{2}^{n_{2}}$, which proves that $[\Psi] = [T_{1}^{n_{1}} T_{2}^{n_{2}}]$, as wanted.

Finally, let us suppose that $T'=\Psi T \Psi^{-1}$ is isotopic to $T$ relative to $\bbZ_{1} \cup \bbZ_{2}$, and let us find a homeomorphism that is isotopic to $\Psi$ and that commutes with $T$. 
 Let $\alpha'_{1} = \Psi(\alpha_{1}), \alpha'_{2} = \Psi(\alpha_{2})$, and consider the family of curves
 $$
 \{ {T'}^{n}(\alpha'_{1}), {T'}^{n}(\alpha'_{2}) \mid n \in \bbZ\}.
 $$
By conjugacy, this is a locally finite family of pairwise disjoint curves. The ``straightening principle'' (point 1 of lemma~\ref{lemm.straightening}) provides a homeomorphism $\Psi_{1}$, isotopic to the identity relative to $\bbZ_{1} \cup \bbZ_{2}$, sending each curve in our family to its isotopic geodesic. On the other hand, since $T'$ is isotopic to $T$, the curve ${T'}^{n}(\alpha'_{1})$ is isotopic to ${T}^{n}(\alpha'_{1})$. We may have chosen the geodesic structure so that $T$ is an isometry (see section~\ref{subsection.basic}), and then the corresponding geodesic is $({T}^{n}(\alpha'_{1}))^{\sharp} = {T}^{n}({\alpha'_{1}}^{\sharp})$.
 Likewise, the geodesic in the isotopy class of ${T'}^{n}(\alpha'_{2})$ is  ${T}^{n}({\alpha'_{2}}^{\sharp})$.
Letting $\Psi' = \Psi_{1} \Psi$ we get,
for every integer $n$, $\Psi'(T^{n}(\alpha_{1})) = T^{n}({\alpha'_{1}}^{\sharp})$ and $\Psi'(T^{n}(\alpha_{2})) = T^{n}({\alpha'_{2}}^{\sharp})$. 
Thus the concatenation of the curves $T^{n}({\alpha'_{1}}^{\sharp})$ is a proper homotopy streamline which is invariant under $T$, the same holds for the concatenation of the curves  $T^{n}({\alpha'_{2}}^{\sharp})$, both homotopy streamlines are disjoint, and the first one is on the right hand side of the second one endowed with the orientation induced by the action of $T$.
Consider the quotient map $P : \bbR^2 \to \bbR^{2}/T$. The projections of ${\alpha'_{1}}^{\sharp}$ and ${\alpha'_{2}}^{\sharp}$ in this quotient are two disjoint simple closed curves, and $P {\alpha'_{1}}^{\sharp}$ is on the right hand side of $P {\alpha'_{2}}^{\sharp}$, where $P {\alpha'_{2}}^{\sharp}$ is oriented from $\Psi(0,2)$ to $\Psi(1,2)$. Of course, the projections of the curves $\alpha_{1}$ and $\alpha_{2}$ share the same properties. The classification of surfaces provides a homeomorphism of the quotient annulus that is isotopic to the identity and sends $P \alpha_{1}$ on $P {\alpha'_{1}}^{\sharp}$ and $P \alpha_{2}$ on $P {\alpha'_{2}}^{\sharp}$;  we may further demand that the image of the projection of a point $p \in \alpha_{1}$ is precisely the projection of the point $\Psi'(p)$, and likewise for $\alpha_{2}$. Lifting this homeomorphism to the plane, we get a homeomorphism $\Psi''$ which commutes with $T$ and agrees with $\Psi'$ on the lines $A_{1}$ and $A_{2}$. A last use of Alexander's tricks provides an isotopy (relative to $A_{1} \cup A_{2}$) from $\Psi'$ to $\Psi''$. Finally $[\Psi'']=[\Psi']=[\Psi]$ and $\Psi''$ commutes with $T$, which completes the proof.
\end{proof}

\section{Quasi-additivity}\label{section.additivity}
In this section we deduce the quasi-additivity Theorem~\ref{theo.2} from the classification in homotopy Brouwer theory provided by Theorem~\ref{theo.3}. We will use the characterization of flow classes in terms of proper homotopy streamlines (Lemma~\ref{lem.flow-streamlines}).

Let $h$ be a Brouwer homeomorphism and let us choose two orbits $\cO_{1}, \cO_{2}$ of $h$. Assume there exist two disjoint proper homotopy streamlines  $\Gamma_{1}, \Gamma_{2}$ for
 $[h ;  \cO_{1}, \cO_{2}]$ containing respectively $\cO_{1}$ and $\cO_{2}$. According to the ``straightening principle'' (Corollary~\ref{coro.straightening}), there exists $h' \in [h ;  \cO_{1}, \cO_{2}]$ which globally fixes $\Gamma_{1}$ and  $\Gamma_{2}$. 
The Schoenflies-Homma theorem (appendix~\ref{sec.schoenflies-homma}) provides an orientation preserving homeomorphism $\Phi$ that sends $\cO_{1}$ onto $\bbZ_{1}$, $\cO_{2}$ onto $\bbZ_{2}$, $\Gamma_{1}$ onto $\bbR \times \{1\}$ and $\Gamma_{2}$ onto $\bbR \times \{2\}$; we may further demand that $\Phi h' \Phi^{-1}$ coincides either with $T$ or $T^{-1}$ on each line $\bbR \times \{1\}$ and $\bbR \times \{2\}$. Then, by Alexander's trick (Lemma~\ref{lemma.Alexander}), $\Phi h' \Phi^{-1}$ is isotopic to $T,T^{-1},R$ or $R^{-1}$ relative to $\bbZ_{1} \cup \bbZ_{2}$, and so is $\Phi h \Phi^{-1}$. Thus
the index  $I(h, \cO_{1}, \cO_{2})$ is defined as the index of $\Phi h \Phi^{-1}$ along any curve going from a point of $\Phi(\cO_{1})$ to a point of $\Phi(\cO_{2})$ (section~\ref{subsection.index}).

We would like to be able to evaluate this index directly with $h$, $\Gamma_{1}$ and $\Gamma_{2}$. For this, we consider some curve $\alpha$ going from a point of $\cO_{1}$ to a point of $\cO_{2}$, and an isotopy $(\Phi_{t})$ from the identity to $\Phi$. The number
$$
I(\Phi_{t} h \Phi_{t} ^{-1}, \Phi_{t}(\alpha))
$$
varies continuously from $I(h,\alpha)$  to $I(\Phi h \Phi^{-1}, \Phi(\alpha)) = I(h, \cO_{1}, \cO_{2})$. Furthermore, the total variation $I(h, \cO_{1}, \cO_{2}) - I(h,\alpha)$ 
of this number may be expressed as follows. Let $a_{s}$ denote the angular variation, when $t$ goes from $0$ to $1$, of the vector joining the points $\Phi_{t} (\alpha(s))$ and $\Phi_{t} h \Phi_{t}^{-1} (\Phi_{t} (\alpha(s)))$. Then 
$$
I(h, \cO_{1}, \cO_{2}) - I(h,\alpha) = a_{1} - a_{0}.
$$
Indeed, consider the map
$$
\begin{array}{rcl}
\Delta : [0,1]^2 & \to & \bbR^2 \setminus \{0\} \\
(s,t) & \mapsto & \Phi_{t} h \Phi_{t}^{-1} (\Phi_{t} (\alpha(s))) - \Phi_{t}(\alpha(s)).
\end{array}
$$
Since the unit square is contractible, the winding number of $\Delta$ along the boundary of the unit square is zero. On the other hand this number is the difference between the two terms of the above equality.

 As an example, consider the situation depicted by Figure~\ref{figure.trois-trajectoires}.
 One can find a map $\Phi$ that sends the curves $\Gamma_{1}$, $\Gamma_{2}$ to parallel straight lines, and an isotopy from the identity to $\Phi$ during which the vector  joining $\alpha_{1}(0)$ to $\Phi_{t} h \Phi_{t}^{-1} (\alpha_{1}(0))$ is constant, while the vector joining  $\alpha_{1}(1)$ to $ \Phi_{t} h \Phi_{t}^{-1} (\alpha_{1}(1))$ undergoes an angular variation of a sixth of a full turn in the positive direction. Thus $I(h, \cO_{1}, \cO_{2}) = I(h,\alpha_{1}) +1/6$.

\begin{figure}[h!]
\begin{center}
\begin{tikzpicture}[scale=0.7]
\tikzstyle{fleche pointille}=[>=latex,->,dashed]

\begin{scope}
\path (-4,-5) .. controls +(45:1) and  +(left:1.7) ..
 (-1,-3) node[inner sep=0mm,pos=0.25,name=P1] {$\bullet$} node[inner sep=0mm,pos=0.5,name=P2] {$\bullet$} node[inner sep=0mm,pos=0.75,name=P3] {$\bullet$} node[inner sep=0mm,pos=1,name=P4] {$\bullet$}
-- (0,-3) node [inner sep=0mm,name=A1] {$\bullet$} 
-- (1,-3) node [inner sep=0mm,name=B1] {$\bullet$} 
.. controls +(right:1.7) and +(135:1) ..
(4, -5)  node[inner sep=0mm,pos=0.25,name=Q1] {$\bullet$} node[inner sep=0mm,pos=0.5,name=Q2] {$\bullet$} node[inner sep=0mm,pos=0.75,name=Q3] {$\bullet$}
node [very near end,right,inner sep=5mm] {$\Gamma_{1}$} ;

\draw (-4,-5) -- (P1)  -- ({P2})  -- ({P3}) -- (P4) -- (A1) -- (B1) -- ({Q1})  -- ({Q2})  -- ({Q3}) -- (4,-5) ;

\end{scope}

\begin{scope}[rotate=120]
\path (-4,-5) .. controls +(45:1) and  +(left:1.7) ..
 (-1,-3) node[inner sep=0mm,pos=0.25,name=P1] {$\times$} node[inner sep=0mm,pos=0.5,name=P2] {$\times$} node[inner sep=0mm,pos=0.75,name=P3] {$\times$} node[inner sep=0mm,pos=1,name=P4] {$\times$}
-- (0,-3) node [inner sep=0mm,name=A2] {$\times$} 
-- (1,-3) node [inner sep=0mm,name=B1] {$\times$} 
.. controls +(right:1.7) and +(135:1) ..
(4, -5)  node[inner sep=0mm,pos=0.25,name=Q1] {$\times$} node[inner sep=0mm,pos=0.5,name=Q2] {$\times$} node[inner sep=0mm,pos=0.75,name=Q3] {$\times$}
node [very near end,left,inner sep=5mm] {$\Gamma_{2}$} ;

\draw (-4,-5) -- (P1)  -- ({P2})  -- ({P3}) -- (P4) -- (A2) -- (B1) -- ({Q1})  -- ({Q2})  -- ({Q3}) -- (4,-5) ;
\end{scope}

\begin{scope}[rotate=240]
\path (-4,-5) .. controls +(45:1) and  +(left:1.7) ..
 (-1,-3) node[inner sep=0mm,pos=0.25,name=P1] {$\circ$} node[inner sep=0mm,pos=0.5,name=P2] {$\circ$} node[inner sep=0mm,pos=0.75,name=P3] {$\circ$} node[inner sep=0mm,pos=1,name=P4] {$\circ$}
-- (0,-3) node [inner sep=0mm,name=A3] {$\circ$} 
-- (1,-3) node [inner sep=0mm,name=B1] {$\circ$} 
.. controls +(right:1.7) and +(135:1) ..
(4, -5)  node[inner sep=0mm,pos=0.25,name=Q1] {$\circ$} node[inner sep=0mm,pos=0.5,name=Q2] {$\circ$} node[inner sep=0mm,pos=0.75,name=Q3] {$\circ$}
node [very near end,below,inner sep=5mm] {$\Gamma_{3}$} ;

\draw (-4,-5) -- (P1)  -- ({P2})  -- ({P3}) -- (P4) -- (A3) -- (B1) -- ({Q1})  -- ({Q2})  -- ({Q3}) -- (4,-5) ;
\end{scope}

\draw [fleche pointille] (A1) -- (A2) node [near start,above,inner sep=4mm] {$\alpha_{1}$} ;
\draw [fleche pointille] (A2) -- (A3) node [midway,above] {$\alpha_{2}$} ;

\end{tikzpicture}
\end{center}
\caption{Three streamlines in the non-separating case}
\label{figure.trois-trajectoires}
\end{figure}
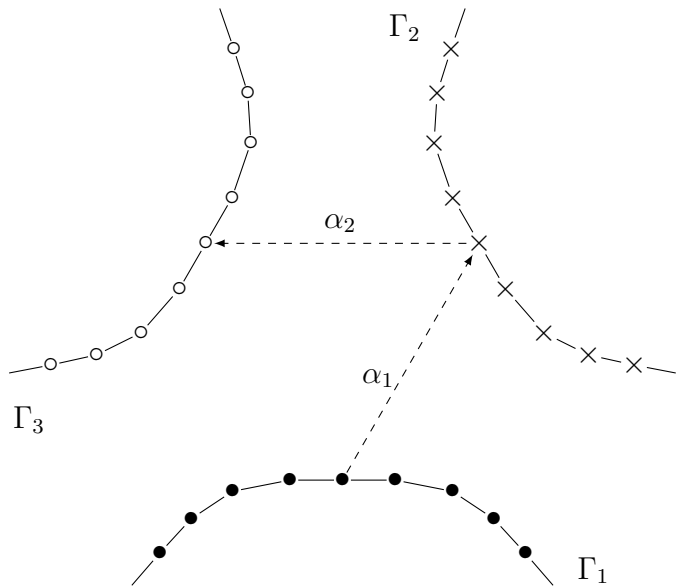

Now consider three orbits $\cO_{1}, \cO_{2}, \cO_{3}$ of $h$. According to the classification of Theorem~\ref{theo.3}, and its practical version given by Lemma~\ref{lem.flow-streamlines}, we may find pairwise disjoint proper homotopy streamlines $\Gamma_{1}, \Gamma_{2}, \Gamma_{3}$  for
 $[h ;  \cO_{1}, \cO_{2}, \cO_{3}]$ containing respectively $\cO_{1},\cO_{2}, \cO_{3}$. The topological lines $\Gamma_{1}, \Gamma_{2}$ are also proper homotopy streamlines with respect to  $[h ;  \cO_{1}, \cO_{2}]$, and thus the above discussion applies. Now there are two cases. The first case happens when one of the three lines separates\footnote{A subset $A$ of a topological space $X$ is said to \emph{separate} two other subsets $B,C$ if $A$ is disjoint from $B$ and $C$ and no connected component of $X \setminus A$ meets both $B$ and $C$.}
  the other two, and the situation is topologically equivalent to three parallel straight lines. Since our index is a conjugacy invariant, we may assume that the streamlines are parallel straight lines. In this situation we choose any curve $\alpha_{1}$ from $\cO_{1}$ to $\cO_{2}$, and any curve $\alpha_{2}$ from $\cO_{2}$ to $\cO_{3}$, with $\alpha_{1}(1) = \alpha_{2}(0)$. We have
 $$
I(h, \cO_{1}, \cO_{2}) =  I(h,\alpha_{1}), \ \ I(h, \cO_{2}, \cO_{3}) =  I(h,\alpha_{2}), 
 $$
 $$
 I(h, \cO_{1}, \cO_{3}) =   I(h,\alpha_{1} \star \alpha_{2}) = I(h,\alpha_{1}) + I(h,\alpha_{2}).
 $$
Since $I(h, \cO_{3}, \cO_{1}) = -I(h, \cO_{1}, \cO_{3})$,  in this case we get perfect additivity, namely 
$$
I(h, \cO_{1}, \cO_{2}) + I(h, \cO_{2}, \cO_{3}) + I(h, \cO_{3}, \cO_{1}) = 0.
$$

In the second case, none of the three lines separates the other two, as on figure~\ref{figure.trois-trajectoires}.
There exists a homeomorphism of the plane that sends the general situation to the particular situation depicted on the Figure. To begin with let us assume that this homeomorphism preserves the orientation of the plane. Then we may again assume the three lines are those depicted on the Figure. We pick two curves $\alpha_{1}, \alpha_{2}$ as above.  
According to the above discussion, this time we get
$$
I(h, \cO_{1}, \cO_{2}) =  I(h,\alpha_{1})+1/6, \ \ I(h, \cO_{2}, \cO_{3}) =  I(h,\alpha_{2}) + \frac{1}{6}, 
$$
$$
 I(h, \cO_{3}, \cO_{1}) = -  I(h,\alpha_{1} \star \alpha_{2}) +\frac{1}{6} 
 $$
and thus 
$$
I(h, \cO_{1}, \cO_{2}) + I(h, \cO_{2}, \cO_{3}) + I(h, \cO_{3}, \cO_{1}) = \frac{1}{2}.
$$
It remains to consider the case when the situation of the Figure happens only up to an  orientation reversing homeomorphism. This case  yields an entirely similar computation, where the '$+1/6$' are replaced by '$-1/6$', and thus the final '$+1/2$' becomes a '$-1/2$'. In any case the quasi-additivity relation holds.

\section{Brouwer classes relative to three orbits}\label{section.three}

In this section we prove Theorem~\ref{theo.3}.

\subsection{Advanced tools for homotopy Brouwer theory}
We need to go deeper into homotopy Brouwer theory, borrowing from~\cite{handel1999fixed} and~\cite{franks2010entropy}. The statements that are suitable for our needs are not explicitly stated in the quoted papers. In Appendix~A we will explain how to get these statements (Proposition~\ref{prop.half-proper-streamlines} and~\ref{prop.reducing-line} below) from those papers.

We adopt the notations of section~\ref{subsection.basic}, and we generalize the notion of homotopy translation arc to encompass the case of an arc meeting several $\cO_{i}$'s. Consider an arc $\alpha$ which is the concatenation $\alpha_{1} \star \dots  \star  \alpha_{k}$ of
some elements $\alpha_{i}$ in $\cA_{0}$. Assume that $\alpha(1) = h(\alpha(0))$, and that for every $n,i,j$, the arcs $h^{n}(\alpha_{i})$ and $\alpha_{j}$ are homotopically disjoint. Then $\alpha$  is called a \emph{(generalized) homotopy translation arc} for $[h;\cO_{1}, \dots , \cO_{r}]$. A typical example is shown on figure~\ref{figure.generalized-hta-ex}.

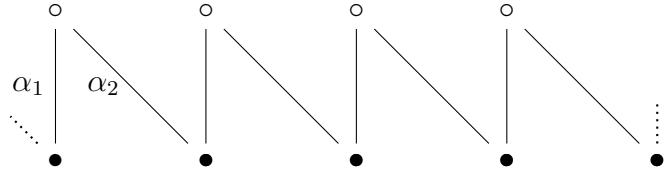
\begin{figure}[h!]
\begin{center}
\begin{tikzpicture}[scale=2]
\foreach \k in {0,...,3} 	{\node [name=A\k] at (\k,1)  {$\bullet$} ; \node [name=B\k] at  (\k,2) {$\circ$} ;\node [name=C\k] at  (\k+1,1) {$\circ$} ;}
\node [name=A4] at (4,1)  {$\bullet$} ;
\foreach \k in {0,...,3} {\draw (A\k)  --  (B\k) --(C\k) ;}
\draw [thick,dotted] (A0) -- (-0.3,1.3) ;
\draw [thick,dotted] (A4) -- (4,1.4) ;
\node at (0,1.5) [left] {$\alpha_{1}$} ;
\node at (0.5,1.5) [left] {$\alpha_{2}$} ;
\end{tikzpicture} 
\end{center}
\caption{A generalized homotopy translation arc containing two orbits}
\label{figure.generalized-hta-ex}
\end{figure}

The (generalized) homotopy translation arc  will be called \emph{forward proper} if $\alpha_{1},  \cdots ,  \alpha_{k}$ are forward proper. Equivalently, the half-infinite concatenation
$$
A^{+} =  \alpha_{1}^\sharp \star \cdots \star \alpha_{k}^\sharp   \star  (h\alpha_{1})^\sharp \star \cdots \star (h\alpha_{k})^\sharp  \star \cdots
$$ 
is the image of $[0,+\infty)$ under a proper injective continuous map. This set will be called a \emph{(generalized) proper forward homotopy streamline}.
A \emph{(generalized) proper backward homotopy streamline} is defined symmetrically. We will need the existence of a nice family of (generalized) proper backward and forward homotopy streamlines.

\begin{prop}[Handel]
\label{prop.half-proper-streamlines}
Let $h$ be a Brouwer homeomorphism, and $x_{1}, \dots , x_{r}$ points belonging to distinct orbits $\cO_{1}, \dots,  \cO_{r}$ of $h$.
Then there exist a number $1 \leq r' \leq r$ and a family of $2r'$ pairwise disjoint (generalized) proper backward or forward  homotopy streamlines, whose union contains  all but a finite number of points of the union of the $\cO_{i}$'s. Furthermore, the backward and forward streamlines alternate in the cyclic order at infinity.
\end{prop}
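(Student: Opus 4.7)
The plan is to deduce the Proposition from Handel's machinery of fitted families of homotopy translation arcs in~\cite{handel1999fixed}, completed by~\cite{franks2010entropy}. The details are deferred to Appendix~A; here is the strategy.

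First, for each orbit $\cO_i$ I would apply Handel's existence theorem to produce a homotopy translation arc $\alpha_i$ joining $x_i$ to $h(x_i)$ and disjoint from all other orbits. Straightening the countable family $\{h^n(\alpha_i)\}$ to geodesics via Lemma~\ref{lemm.straightening} yields pairwise disjoint geodesic representatives $\alpha_i^\sharp$. When the iterates are homotopically proper in both directions, this already gives $r'=r$ and $2r$ half-streamlines $A_i^\pm = \bigcup_{\pm n \geq 0} h^n(\alpha_i^\sharp)$; the problem is that (i) properness may fail, and (ii) even when it holds the ends at infinity need not alternate, as is illustrated by two parallel orbits of the translation $T$, for which the two forward ends go to $+\infty$ in the same direction with no backward end in between.

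To fix both issues simultaneously, the key step is a merging procedure: whenever the forward iterates of $\alpha_i$ fail to be homotopically proper, or more generally whenever the forward ends of two individual streamlines point toward the ``same end at infinity'', Handel's analysis shows one can interleave several $\alpha_j$'s along a single \emph{generalized} translation arc, as in Figure~\ref{figure.generalized-hta-ex}, producing a generalized arc that \emph{is} forward proper and absorbs all the bad forward behaviour. Performing the dual procedure for the backward iterates partitions the $\cO_i$'s (modulo finitely many points) into $r'$ classes, and yields $r'$ forward and $r'$ backward generalized proper homotopy streamlines, for a total of $2r'$; a final application of the straightening principle keeps them pairwise disjoint off of $\cO$.

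The cyclic alternation at infinity is then automatic: by construction no two forward ends (respectively no two backward ends) can be adjacent at infinity, for otherwise the merging procedure would have combined them into a single generalized streamline. The main obstacle is the second step: establishing that Handel's machinery delivers exactly the combinatorial configuration needed, namely \emph{proper} generalized translation arcs absorbing all non-proper behaviour. This content is essentially present in~\cite{handel1999fixed} (with refinements from~\cite{franks2010entropy}), but since the precise formulation is not stated there, Appendix~A will be devoted to translating between Handel's language of fitted families and the language of generalized proper homotopy streamlines used here.
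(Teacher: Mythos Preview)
Your outline lands in the right neighbourhood and cites the right sources, but the paper organizes the argument differently, and the difference exposes a gap in your version. The paper does not start from arbitrary translation arcs and then worry about properness: it invokes Proposition~6.6 of~\cite{handel1999fixed}, which directly furnishes, for each $x_i$, a \emph{backward proper} homotopy streamline. The merging step (Lemma~4.6 of~\cite{handel1999fixed}) is then used only to combine \emph{intersecting} backward streamlines into a single generalized one; properness is preserved throughout, never created by merging. The same is done independently for forward streamlines, yielding \emph{a priori} $r'$ backward and $r''$ forward streamlines, and a properness argument shows the two families can be made mutually disjoint by replacing the forward ones with large positive iterates.

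Alternation at infinity is then handled by a separate, explicit construction that your outline glosses over: if two forward streamlines $A_1^+$, $A_2^+$ are adjacent in the cyclic order, one joins their initial points by an arc $\gamma$ bounding a half-strip disjoint from $\cO$, observes that $h(\gamma)$ is homotopic into that half-strip so that $\gamma$ is forward proper, and from $\gamma$ builds a single generalized forward streamline replacing both (see Figure~\ref{figure.generalized-hta}). Iterating this, and its backward analogue, forces $r'=r''$---a point your outline does not address. So the two issues you bundle under ``merging''---failure of properness and failure of alternation---are handled by entirely different mechanisms in the paper, and the first one is never actually an issue once the correct existence result from Handel is quoted. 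Your claim that merging non-proper arcs produces a proper generalized arc is not something Handel's Lemma~4.6 delivers, and would need its own justification.
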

The backward homotopy streamlines provided by this Proposition will be denoted by $A^{-}_{1}, \dots , A^{-}_{r'}$, and the forward homotopy streamlines by
$A^{+}_{1}, \dots , A^{+}_{r'}$. The condition on the cyclic order at infinity means that we may choose the numbering so that there exists an arbitrarily small neighbourhood of $\infty$, whose boundary is an (oriented) Jordan curve that meets each streamline exactly once, and that meets $A^+_{1}, A^-_{1}, A^+_{2}, A^-_{2}, \dots , A^-_{r'}$ in that order.
A typical example is displayed on figure~\ref{figure.streamlines}.

\begin{figure}[h]
\begin{center}
\begin{tikzpicture}[scale=1]
\tikzstyle{fleche}=[>=latex,->]
\tikzstyle{fleche passee}=[>=latex,>-]
\draw [fleche,dotted] (0,0) -- (10,0) ; 
\draw [fleche,dotted] (10,4) -- (0,4) ;

\foreach \k in {1,2,...,9} 
	{\draw (\k,0) node [name=A\k,,inner sep=0mm] {$\bullet$} ; \draw (\k,4) node  [name=B\k,,inner sep=0mm] {\footnotesize$\times$} ; }
\draw [fleche,dotted] (0,0.5) .. controls +(right:5) and +(right:5) .. (0,3.5) 
	\foreach \p in {1,2,...,9} {node[pos=\p/10,name=C\p,minimum size=1.5mm,inner sep=1mm] {}} ;
 
\foreach \p in {1,2,...,9} { \node at (C\p) [inner sep=0mm] {$\circ$} ;} 

\draw  [very thick, dotted] (0.5,0.25) -- (A1) ;
\draw [very thick,fleche] (A1) --(C1) --(A2) -- (C2) -- (A3) --(C3) --(A4)  ;
\node at (A3)  [below] {$A_{1}^-$};

\draw [very thick,fleche passee] (A6) --(A7) ; \draw [thick] (A7) -- (A8) -- (A9) ;
\draw [very thick, dotted] (A9) -- (9.5,0) ;
\node at (A8) [below] {$A_{2}^+$};

\draw [very thick, dotted] (0.5,3.75) -- (B1) ;
\draw [very thick] (B1)-- (C9) --(B2) -- (C8) -- (B3) --(C7)  ; \draw [very thick,fleche passee] (B4) --(C7) ; 
\node at (B3)  [above] {$A_{1}^+$};

\draw [very thick,fleche] (B9) --(B8) -- (B7) -- (B6) ;
\draw [very thick, dotted] (B9) -- (9.5,4) ;
\node at (B8) [above] {$A_{2}^-$};

\end{tikzpicture} 
\end{center}
\caption{Alternating forward and backward proper homotopy streamlines for a Brouwer class: here $r=3$ and $r'=2$}
\label{figure.streamlines}
\end{figure}
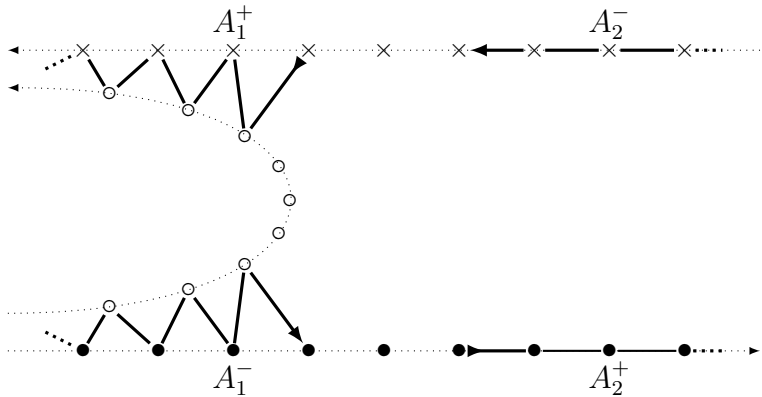

A \emph{reducing line} for $[h; \cO_{1}, \dots, \cO_{r}]$ is a topological line, \emph{i. e.} the image of a proper continuous injective map $\Delta : \bbR \to \bbR^{2}$, which is disjoint from $\cO = \cO_{1} \cup \dots \cup \cO_{r}$, such that none of the two topological half-planes delimited by $\Delta$ contains all the $\cO_{i}$'s, 
and such that $\Delta$ is \emph{properly isotopic} to  $h(\Delta)$ relative to $\cO$: there exists a continuous proper map $H : \bbR \times [0,1] \to \bbR^{2}$ such that $H(.,0) = \Delta$, $H(.,1) = h \Delta$, and for every $t$ the map $H(.,t)$ is  injective.

Let $\cA_{1}$ denote the set of topological lines. Exactly as for $\cA_{0}$, each element in $\cA_{1}$ is properly isotopic relative to $\cO$ to a unique geodesic for the hyperbolic metric, and the ``straightening principle'' still holds
(see~\cite{handel1999fixed} for details).
\begin{lemm}\label{lemm.straightening2}
Lemma~\ref{lemm.straightening} holds when $\cA_{0}$ is replaced by $\cA_{0} \cup \cA_{1}$.
\end{lemm}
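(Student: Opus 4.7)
The plan is to verify that each of the three statements in Lemma~\ref{lemm.straightening} extends verbatim from $\cA_{0}$ to $\cA_{0}\cup \cA_{1}$, by observing that elements of $\cA_{1}$ fit into the same hyperbolic-geometric framework as elements of $\cA_{0}$. Concretely, a topological line $\Delta \in \cA_{1}$ lifts to a properly embedded arc in the universal cover $\bbH^{2}$ of $\bbR^{2}\setminus \cO$ whose two ends converge to two distinct points of the circle at infinity; the hyperbolic geodesic joining those two endpoints projects to the unique geodesic representative $\Delta^{\sharp}$ of the proper isotopy class of $\Delta$. Two such geodesic representatives (whether both in $\cA_{0}$, both in $\cA_{1}$, or one of each) intersect minimally in their respective proper isotopy classes, by the standard theory of geodesics on hyperbolic surfaces.

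Given this framework, for points 1 and 2 I would run the same inductive straightening argument as in \cite{handel1999fixed}. Enumerate the locally finite family $\{\alpha_{i}\}$ (together with $\{\beta_{j}\}$ for point 2) and proceed one curve at a time: at each stage, the curve to be straightened is already in minimal intersection position with the geodesic representatives of the previously straightened curves, so a single-curve straightening (whose proof is unchanged whether the curve lies in $\cA_{0}$ or in $\cA_{1}$) provides a compactly supported ambient isotopy in $\homeo^{+}_{0}(\bbR^{2};\cO_{1},\dots,\cO_{r})$ fixing what has already been straightened. The local finiteness assumption ensures that the infinite composition of these isotopies converges on compact sets to the desired homeomorphism $f$.

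For point 3, suppose by contradiction that $(\alpha_{n})$ is homotopically proper but that $(\alpha_{n}^{\sharp})$ is not proper, so that a subsequence meets some fixed compact set $K\subset \bbR^{2}\setminus \cO$. Lifting to $\bbH^{2}$ and extracting a further subsequence, one obtains a limiting geodesic; for large $n$, $\alpha_{n}^{\sharp}$ closely shadows this limit on a compact region, which prevents $\alpha_{n}$ from being isotoped off a slightly enlarged compact set and contradicts homotopic properness. The main obstacle is the treatment of endpoints when $\alpha_{n}\in \cA_{1}$: these endpoints live at infinity of $\bbR^{2}$ rather than at points of $\cO$, so one must verify that the proper isotopies built into the definition of homotopy properness force convergence of the endpoints in an appropriate boundary of $\bbH^{2}$. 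This is the only step that requires a genuine adaptation of Handel's arguments; the rest of the proof is a direct transcription.
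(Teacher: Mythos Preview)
The paper does not give its own proof of this lemma: it simply asserts that the straightening principle extends to $\cA_{0}\cup\cA_{1}$ and refers the reader to \cite{handel1999fixed} for details. Your proposal therefore goes further than the paper itself, by sketching how Handel's arguments would be transcribed to the mixed setting. The outline you give for points~1 and~2 (lift to $\bbH^{2}$, use unique geodesic representatives, straighten one curve at a time via compactly supported isotopies, compose using local finiteness) is indeed the standard approach and matches what one finds in Handel's paper; your identification of the one genuinely new wrinkle---that for elements of $\cA_{1}$ the endpoints live at the point at infinity of $\bbR^{2}$ rather than at punctures---is accurate.

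One small caution on point~3: your contradiction argument is morally right, but as written it is a bit loose. The issue is not so much the endpoints in $\partial\bbH^{2}$ as the fact that ``closely shadows a limit geodesic on a compact region'' does not by itself obstruct a proper isotopy off a compact set; you need to argue that the limiting geodesic forces the $\alpha_{n}^{\sharp}$ to have essential intersection with some fixed compact subsurface (for instance, to cross a fixed simple closed geodesic), and that this intersection number is a proper-isotopy invariant. That is the actual mechanism in Handel's argument, and it works identically for arcs and for lines.
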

Thus if $\Delta$ is a reducing line we may find an element $h'$ of the mapping class $[h; \cO_{1}, \dots, \cO_{r}]$ such that $h'(\Delta) = \Delta$. In particular each orbit $\cO_{i}$ has to be entirely on one side of $\Delta$.

We will say that $[h; \cO_{1}, \dots, \cO_{r}]$ is a \emph{translation class} if it contains a homeomorphism which is conjugate to a translation. (Equivalently, 
$[h; \cO_{1}, \dots, \cO_{r}]$ is a translation class if and only if it is conjugate to $[T; \bbZ \times\{1\}, \dots, \bbZ \times \{r\}]$.)
\begin{prop}[Handel]
\label{prop.reducing-line}
We assume the notation of the preceding proposition.

\begin{enumerate}
\item If $r'=1$, then $[h;\cO_{1}, \dots , \cO_{r}]$ is a translation class.
\item If $r'=r$, then there exists a reducing line which is disjoint from the $2r$ homotopy backward and forward streamlines of the preceding proposition.
\item More generally, as soon as $r \geq2$,  there exists a reducing line which is disjoint from the $r'$ backward proper homotopy streamlines, and from all the forward proper homotopy streamlines that meet only one of the $\cO_{i}$'s.
\end{enumerate}
\end{prop}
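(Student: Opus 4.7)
The plan is to first invoke the straightening principle (Corollary~\ref{coro.straightening} together with Lemma~\ref{lemm.straightening2}) to replace $h$ by an isotopic representative $h'$ that globally preserves each of the $2r'$ proper homotopy streamlines $A^-_1, A^+_1, \ldots, A^-_{r'}, A^+_{r'}$ produced by Proposition~\ref{prop.half-proper-streamlines}. Once each streamline is genuinely invariant, the cyclic alternation at infinity becomes a robust combinatorial datum, and the geometry of $h'$ near each streamline is a genuine translation.

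For item (1), where $r'=1$, the union $A^-_1 \cup A^+_1$ contains all but finitely many points of $\cO$, and the two ends of $A^-_1$ interleave with the two ends of $A^+_1$ at infinity. Using the Schoenflies-Homma theorem (Appendix~\ref{sec.schoenflies-homma}) I would straighten the pair $(A^-_1, A^+_1)$ to the two horizontal lines $\RR\times\{1,2\}$ and arrange that $h'$ acts as a translation along each. The finitely many remaining points of $\cO$ can then be absorbed by small detours, exactly in the style of Figure~\ref{figure.modifier-flot}; combined with Alexander's trick (Lemma~\ref{lemma.Alexander}) one obtains an isotopy relative to $\cO$ to a conjugate of the standard translation $T$, proving that the mapping class is a translation class.

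For items (2) and (3) the plan is to build the reducing line directly from the combinatorics at infinity. After straightening, the streamlines under consideration cut the plane into topological strips and half-planes, and between any two consecutive streamlines in the cyclic order at infinity there is an ``end gap''. In case (2) each of the $2r$ streamlines carries exactly one orbit, so the $2r$ end gaps are organized in pairs (the two gaps at the ends of a single orbit). Choosing two end gaps that do not form such a pair and joining them by a proper simple arc contained in the complement of the streamlines yields a topological line $\Delta$ disjoint from $\cO$ and with neither side containing all of $\cO$. In case (3), where $r \geq 2$, the hypothesis on $r'$ and the restriction to forward streamlines meeting a single orbit still leaves enough end gaps whose endpoints are not associated with the same single orbit, and the same construction works.

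The main obstacle is showing that $\Delta$ is \emph{properly} isotopic relative to $\cO$ to $h'(\Delta)$, which is what is actually needed for it to be a reducing line. The idea is that, since $h'$ preserves each streamline setwise and $\Delta$ avoids all of them, the image $h'(\Delta)$ is another topological line disjoint from the streamlines; moreover it emerges at infinity through the same pair of end gaps as $\Delta$ and separates $\cO$ in the same way. Such two properly embedded arcs in a surface with punctures are properly isotopic relative to $\cO$, by the geodesic straightening in Lemma~\ref{lemm.straightening2} applied to the family $\{\Delta, h'(\Delta)\}$ together with the streamlines. This is the delicate point, and it is here that one must appeal to the more detailed streamline machinery borrowed from~\cite{handel1999fixed} and~\cite{franks2010entropy}, as will be elaborated in Appendix~A.
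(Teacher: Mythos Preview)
Your proposal has a structural misunderstanding at the very first step, and the main difficulty in items~(2)--(3) is not addressed.

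\textbf{Half-streamlines are not lines.} The objects $A^-_i, A^+_i$ furnished by Proposition~\ref{prop.half-proper-streamlines} are proper images of $[0,+\infty)$, not of $\bbR$: each has a single end at infinity and a finite endpoint in the plane. Consequently you cannot invoke Corollary~\ref{coro.straightening} to obtain an $h'$ that ``globally preserves'' them; the best you get is $h'(A^+_i)\subset A^+_i$ and $h'^{-1}(A^-_i)\subset A^-_i$. For the same reason your claim in item~(1) that ``the two ends of $A^-_1$ interleave with the two ends of $A^+_1$'' is meaningless, and the Schoenflies--Homma straightening to $\bbR\times\{1,2\}$ cannot be carried out as stated. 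The paper's argument for $r'=1$ is entirely different: since Theorem~\ref{theo.fitted} requires $i_0\neq i_1$, it cannot apply when $r'=1$, and hence $\alpha_1^-$ must be forward proper. This yields a single \emph{full} proper homotopy streamline containing all of $\cO$, from which the translation class conclusion follows directly.

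\textbf{The reducing-line property is the whole problem.} In items~(2)--(3), drawing a proper arc $\Delta$ through two ``end gaps'' and disjoint from the streamlines is easy; what is hard is proving that $h(\Delta)$ is properly isotopic to $\Delta$ relative to $\cO$. Your argument that $h'(\Delta)$ ``emerges through the same pair of end gaps'' is unjustified: the complement of the half-streamlines is not $h'$-invariant, and even where it is, two arcs with the same ends at infinity need not be isotopic rel~$\cO$ (they can differ by winding around points of $\cO$). The paper does not build $\Delta$ geometrically at all. Instead it splits into cases: if some $\alpha_i^-$ is forward proper one pushes the resulting full streamline to the side via Lemma~\ref{lemm.reducing-line}; otherwise one applies Handel's fitted-family Theorem~\ref{theo.fitted}, lifts to $\bbH^2$, locates two fixed points of the lift $\tilde h'$ on $\partial\bbH^2$ (one in each of the intervals $I_0,I_1$), and takes $\Delta$ to be the projection of the geodesic joining them. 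The isotopy $h(\Delta)\simeq\Delta$ is then automatic because the endpoints of $\tilde\lambda$ are $\tilde h'$-fixed. Your final sentence concedes that this is ``the delicate point'' and defers to Appendix~A, but Appendix~A \emph{is} the proof you are meant to be writing---so the proposal is effectively circular.
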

We call the first case the \emph{translation case}, the second case the \emph{alternating case}, and the remaining case the \emph{remaining case}.
Note that in the alternating case each homotopy translation arc provided by the first above proposition meets only one orbit; in other words, these are homotopy translation arcs in the non generalized meaning.  The alternating case generalizes the \emph{multiple Reeb class} case (see Figure~\ref{figure.alternating-case}) in which the cyclic order at infinity on the set of proper homotopy streamlines given by Proposition~\ref{prop.half-proper-streamlines} is  $A_{1}^{-}, A_{2}^{+} , \dots ,  A_{r}^{\varepsilon}, A_{r}^{-\varepsilon}, \dots,A_{2}^{-}, A_{1}^{+}$, where $A_{i}^-$ and $A_{i}^+ $ are the backward and forward homotopy streamlines meeting the same orbit $\cO_{i}$, and $\varepsilon=\pm 1$ according to the parity of $r$. The multiple Reeb class case is treated by Franks and Handel (Lemma 8.9 in~\cite{franks2003periodic}).

\begin{figure}[h!]
\begin{center}
\begin{tikzpicture}[scale=1,thick]
\tikzstyle{fleche}=[>=latex,->]
\tikzstyle{fleche passee}=[>=latex,>-]

\draw [fleche] (4,0) node [right] {$A_{1}^{-}$} -- (3,0) ;
\draw [dotted] (3,0) -- (1,0) ;
\draw [fleche passee] (1,0)  -- (0,0) node [left] {$A_{1}^{+}$} ;

\draw [fleche] (0,1) node [left] {$A_{2}^{-}$} -- (1,1) ;
\draw [dotted] (1,1) -- (3,1) ;
\draw [fleche passee] (3,1)  -- (4,1) node [right] {$A_{2}^{+}$} ;

\draw [fleche] (4,2) node [right] {$A_{3}^{-}$} -- (3,2) ;
\draw [dotted] (3,2) -- (1,2) ;
\draw [fleche passee] (1,2)  -- (0,2) node [left] {$A_{3}^{+}$} ;

\draw [fleche] (0,3) node [left] {$A_{4}^{-}$} -- (1,3) ;
\draw [dotted] (1,3) -- (3,3) ;
\draw [fleche passee] (3,3)  -- (4,3) node [right] {$A_{4}^{+}$} ;

\begin{scope}[xshift=8cm]
\draw [fleche] (4,0) node [right] {$A_{1}^{-}$} -- (3,0) ;
\draw [dotted] (3,0) -- (1,0) ;
\draw [fleche passee] (1,0)  -- (0,0) node [left] {$A_{1}^{+}$} ;

\draw [fleche] (0,1) node [left] {$A_{2}^{-}$} -- (1,1) ;
\draw [dotted] (1,1) -- (3,1) ;
\draw [fleche passee] (3,1)  -- (4,1) node [right] {$A_{2}^{+}$} ;

\draw [fleche] (4,2) node [right] {$A_{3}^{-}$} -- (3,2) ;
\draw [dotted] (3,2) ..controls +(-0.5,0) and +(0,-0.5)..    (2.5,2.5) ;
\draw [fleche passee] (2.5,2.5)  -- (2.5,3.5) node [above] {$A_{3}^{+}$} ;

\draw [fleche] (1.5,3.5) node [above] {$A_{4}^{-}$} -- (1.5,2.5) ;
\draw [dotted] (1.5,2.5) ..controls +(0,-0.5) and +(-0.5,0)..    (1,2) ;
\draw [fleche passee] (1,2)  -- (0,2) node [left] {$A_{4}^{+}$} ;
\end{scope}

\end{tikzpicture} 
\end{center}
\caption{A multiple Reeb class (left) and another alternating case (right)}
\label{figure.alternating-case}
\end{figure}
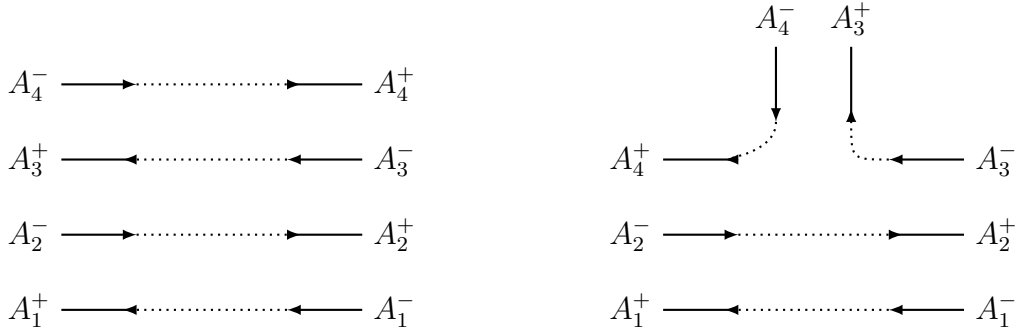
 
 Lemma~\ref{lem.flow-streamlines} above provides a caracterization of flow classes in terms of homotopy streamlines. The proof may be easily adapted to the case of  generalized  homotopy streamlines, and we leave this task to the reader. For future reference we state the corresponding result.
\begin{lemm} \label{lem.flow-streamlines2}
 Lemma~\ref{lem.flow-streamlines} still holds when we replace homotopy streamlines with generalized homotopy streamlines.
\end{lemm}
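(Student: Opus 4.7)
The strategy is to follow, step by step, the proof of Lemma~\ref{lem.flow-streamlines}, replacing the standard straightening principle with its generalized counterpart (Lemma~\ref{lemm.straightening2}) and allowing each homotopy streamline to carry several orbits.

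For the direct implication, assume $A_{1},\dots,A_{r'}$ are pairwise disjoint generalized proper geodesic homotopy streamlines whose union contains every $\cO_{i}$. Write each $A_{j}$ as the bi-infinite concatenation
$$
\cdots \star (h^{-1}\alpha_{j,k_{j}})^\sharp \star \alpha_{j,1}^\sharp \star \cdots \star \alpha_{j,k_{j}}^\sharp \star (h\alpha_{j,1})^\sharp \star \cdots
$$
of iterated geodesic sub-arcs. The collection of all such sub-arcs across $j$ and across iterates is a locally finite family of pairwise homotopically disjoint geodesics (by hypothesis on the streamlines), and its image under $h$ has the same properties. The straightening principle applied to this family, exactly as after Lemma~\ref{lemm.straightening}, produces an element $h' \in [h;\cO_{1},\dots,\cO_{r}]$ with $h'(A_{j})=A_{j}$ and $h'((h^n\alpha_{j,\ell})^\sharp)=(h^{n+1}\alpha_{j,\ell})^\sharp$ for every $j,\ell,n$. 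Since $h'$ agrees with $h$ on $\cO$, it is fixed point free on the union $F$ of the $A_{j}$. Lemma~\ref{lemma.flow} furnishes a fixed point free time-one flow map $h''$ that agrees with $h'$ on $F$, and Lemma~\ref{lemma.Alexander} gives an isotopy between $h''$ and $h'$ relative to $F$, hence relative to $\cO$. Thus $[h;\cO_{1},\dots,\cO_{r}]$ is a fixed point free flow class.

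For the converse, let $(h_{t})_{t\in\bbR}$ be a fixed point free flow whose time-one map is isotopic to $h$ relative to $\cO$. Partition the orbits $\cO_{1},\dots,\cO_{r}$ into equivalence classes according to whether they lie on the same flow trajectory $\Gamma$. For each class, say with orbits $\cO_{i_{1}},\dots,\cO_{i_{k}}$ lying on a common trajectory $\Gamma$, the points of these orbits accumulate only at infinity along $\Gamma$ (since each of the sequences $(h^{n}x)_{n\in\bbZ}$ tends to infinity for $x\in\cO_{i_{s}}$). Pick a point $p$ of $\cO_{i_{1}}\cup\cdots\cup\cO_{i_{k}}$ and follow $\Gamma$ from $p$ to $h(p)$; then order the consecutive points of $\cO_{i_{1}}\cup\cdots\cup\cO_{i_{k}}$ that are encountered, which produces a concatenation $\alpha=\alpha_{1}\star\cdots\star\alpha_{k}$ of arcs in $\cA_{0}$ with $\alpha(0)=p$ and $\alpha(1)=h(p)$. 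This $\alpha$ is a generalized homotopy translation arc, and its iterated concatenation reconstructs $\Gamma$ itself, which is automatically a topological line and thus a generalized proper homotopy streamline. Trajectories coming from distinct equivalence classes of orbits are disjoint in the plane, so the associated generalized proper homotopy streamlines form a pairwise disjoint family whose union contains every $\cO_{i}$, as required.

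The only point requiring slight care is the verification that, in the direct implication, the locally finite family of elementary sub-arcs $\{(h^n\alpha_{j,\ell})^\sharp\}$ really satisfies the hypotheses of the first item of Lemma~\ref{lemm.straightening}; this is where the assumption that the generalized streamlines are pairwise disjoint and geodesic is used in full force. The main potential obstacle, the ``bad case'' of Figure~\ref{figure.modifier-flot} in the proof of Lemma~\ref{lem.flow-streamlines} where several orbits share a trajectory, disappears in the generalized setting, since we are now allowed to group all these orbits into a single generalized streamline $\Gamma$ without any surgery.
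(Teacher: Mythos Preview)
Your argument is correct and is precisely the adaptation the paper has in mind (the paper itself leaves this lemma ``to the reader''). The only point worth tightening is your sentence ``its iterated concatenation reconstructs $\Gamma$ itself, which is automatically a topological line and thus a generalized proper homotopy streamline'': strictly speaking the generalized proper homotopy streamline is the concatenation of the \emph{geodesic} sub-arcs $(h^{n}\alpha_{\ell})^{\sharp}$, not $\Gamma$ itself. What you really use is that each $\alpha_{\ell}$ is backward and forward proper (because its $h_{1}$-iterates lie on the proper line $\Gamma$ and escape to infinity), so that the geodesic streamline is proper by point~3 of Lemma~\ref{lemm.straightening}; disjointness of the geodesic streamlines then follows from minimal intersection of geodesics, since the flow trajectories themselves are disjoint. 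With that small rewording your proof is complete.
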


The easiest way to construct a reducing line is to consider a proper homotopy streamline and to ``push it on the side'', as explicited by the following lemma.
\begin{lemm}\label{lemm.reducing-line}
Let $A$ be a proper homotopy streamline for some $[h;\cO_{1}, \dots , \cO_{r}]$. 
Let $U_{1}, U_{2}$ be the two connected components of $\bbR^2 \setminus A$, and assume that $U_{1}$ contains some points of $\cO$. Then there exists a reducing line $\Delta$ that separates $\cO' := U_{1} \cap \cO$ from  $\cO \setminus \cO'$. Furthermore $\Delta$ may be chosen to be included in any given open set containing $A$.
\end{lemm}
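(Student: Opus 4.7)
The plan is to take $\Delta$ to be a ``parallel push'' of $A$ into $U_{1}$, sitting in the given open set $V$ and avoiding the orbits of $\cO$ not contained in $A$. The $h$-invariance (up to isotopy) of $A$ will then transfer to $\Delta$.

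First I would apply Corollary~\ref{coro.straightening} to the single streamline $\{A\}$ to produce $h_{1}$ in the mapping class of $h$ relative to $\cO$ with $h_{1}(A)=A$. Because the streamline $A$ carries a canonical orientation (its constituent arcs $h_{1}^{n}(\alpha^{\sharp})$ are oriented from $\cO$ to $\cO$) and $h_{1}|_{A}$ preserves this orientation, while $h_{1}$ preserves the orientation of $\bbR^{2}$, the map $h_{1}$ sends each of $U_{1}$, $U_{2}$ to itself.

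Next I would build $\Delta$ using a bicollar. Set $\cO_{A}:=\cO \cap A$. Each orbit of $h$ not contained in $A$ is proper, hence a closed discrete subset of $\bbR^{2}$, so $\cO \setminus \cO_{A}$ is closed in $\bbR^{2}$ and disjoint from $A$. Since $A$ is a properly embedded topological line and $V$ an open neighbourhood of $A$, a standard bicollar together with a continuous shrinking argument yields an embedding $c:\bbR \times (-1,1) \hookrightarrow V$ with $c(\bbR \times \{0\})=A$, $c(\bbR \times (0,1)) \subset U_{1}$, and image meeting $\cO$ only in $\cO_{A}$. Set $\Delta := c(\bbR \times \{1/2\})$: this is a topological line in $V \cap U_{1}$ disjoint from $\cO$, separating $\cO' = U_{1} \cap \cO$ from $\cO \setminus \cO'$ (the latter comprising $\cO_{A}$ on $A$ together with $\cO \cap U_{2}$). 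Neither side of $\Delta$ contains all of $\cO$: $\cO' \neq \emptyset$ by hypothesis, and $\cO_{A} \neq \emptyset$ since $A$ is a homotopy streamline generated by orbits of $\cO$.

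It remains to verify that $\Delta$ is properly isotopic to $h(\Delta)$ relative to $\cO$, meaning through topological lines disjoint from $\cO$. The isotopy $(h_{t})$ from $h$ to $h_{1}$ rel $\cO$ carries $\Delta$ through lines disjoint from $\cO$ (since $h_{t}$ permutes $\cO$ and $\Delta \cap \cO = \emptyset$), so it suffices to show $\Delta$ properly isotopic to $h_{1}(\Delta)$. Both of these lines cobound with $A$ a topological strip in $\overline{U_{1}}$ disjoint from $\cO \setminus \cO_{A}$: the strip $c(\bbR \times [0,1/2])$ for $\Delta$ by construction, and its image under $h_{1}$ for $h_{1}(\Delta)$, still disjoint from $\cO \setminus \cO_{A}$ because $h_{1}$ permutes $\cO$ and fixes $A$ setwise. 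A standard surface-topology argument, parametrising each strip as $\bbR \times [0,1]$ with $A$ as $\bbR \times \{0\}$ and interpolating via the ``height'' coordinate, then produces the desired proper isotopy inside $U_{1} \setminus (\cO \cap U_{1})$. The main technical obstacle I anticipate is this last step: one must ensure that the interpolating family of topological lines assembles into a proper map $\bbR \times [0,1] \to \bbR^{2}$, which requires controlling the two strip parametrisations uniformly out to infinity along $A$.
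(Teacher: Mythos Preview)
Your approach and the paper's begin the same way --- use Corollary~\ref{coro.straightening} to get a representative $h_1$ with $h_1(A)=A$ --- but then diverge at precisely the point you flag as delicate.

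The paper does not attempt to verify directly that a push-off of $A$ is properly isotopic to its $h_1$-image. Instead, after straightening, it conjugates so that $A$ is a straight vertical line on which $h_1$ acts as a translation, and then takes a vertical strip $B \supset A$ with $B \cap \cO = A \cap \cO$. The key extra step is an application of Alexander's trick (Lemma~\ref{lemma.Alexander}) to produce a further representative $h''$ in the same mapping class which equals $h_1$ outside $B$ and is a genuine translation on a smaller strip $B' \subset B$. Now one boundary component $\Delta$ of $B'$ is \emph{literally} $h''$-invariant, so the reducing-line condition ``$\Delta$ properly isotopic to $h(\Delta)$ rel $\cO$'' becomes trivial. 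Finally $\Delta$ is pushed into the prescribed open set $V$ by an isotopy rel $\cO$.

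Your route --- take $\Delta$ as a bicollar push-off and show $\Delta$ is properly isotopic to $h_1(\Delta)$ --- is sound in principle, but the step you correctly worry about really does need work. The two strip parametrisations $c$ and $h_1 \circ c$ disagree on $A$ (by $h_1|_A$), so ``interpolating via the height coordinate'' is not literally defined; one has to invoke uniqueness of collar neighbourhoods of $A$ in $\overline{U_1}$ relative to the closed discrete set $\cO \cap U_1$, and then check that the resulting isotopy is proper as a map $\bbR \times [0,1] \to \bbR^2$. This can be done, but the paper's manoeuvre sidesteps it entirely: by paying the small price of modifying $h_1$ once more via Alexander's trick, the isotopy between $\Delta$ and its image collapses to the constant one.
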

\begin{proof}
By the straightening principle (Corollary~\ref{coro.straightening}), there is some $h' \in [h;\cO_{1}, \dots , \cO_{r}]$ such that $A$ is invariant under $h'$.
By applying a conjugacy we may assume that $A$ is a straight vertical line on which the restriction of $h'$ is a translation. A further conjugacy ensures the existence of a vertical strip $B$ containing $A$ such that $B \cap \cO = A \cap \cO$.  A variation on Alexander's trick provides some $h''\in [h;\cO_{1}, \dots , \cO_{r}]$ such that $h'' = h'$ outside $B$ and $h''$ is a translation in some smaller vertical strip $B'$ still containing $A$. Now it is clear that one of the two boundary components of $B'$ is a reducing line for $[h'';\cO_{1}, \dots , \cO_{r}] = [h;\cO_{1}, \dots , \cO_{r}]$. Given an open set $V$ containing $A$ one can push $\Delta$ inside $V$ by an isotopy relative to $\cO$, which proves the last sentence of the lemma.
\end{proof}

We will now turn to the proof of Theorem~\ref{theo.3}. 
In the translation case the mapping class of $h$ contains a translation, which is certainly the time one map of a flow, so there is nothing to prove in this case.
We will give a separate argument in the alternating case and in the remaining case. According to Lemma~\ref{lem.flow-streamlines}, it suffices to find a family of pairwise disjoint (generalized) proper homotopy streamlines. For this we will consider the (generalized) homotopy translation arcs $\alpha^{-}_{1}, \dots , \alpha^{-}_{r'}$ that generate the backward proper homotopy streamlines $A^{-}_{1}, \dots , A^{-}_{r'}$. In the alternating case, we will prove that the $\alpha_{j}$'s are also forward proper, thus they will generate the wanted family of homotopy streamlines. Life will be a little more complicated in the remaining case, especially in one sub-case where one homotopy streamline will be provided by forward iterating one of the $A^{-}_{j}$'s,  another one by backward iterating one of the $A^{+}_{j}$'s, and the last one by a separate argument.

\subsection{The alternating case}
In this case we prove a stronger result, namely that  the mapping class of $h$ is a flow class, not only when $r=3$ as is required by the Theorem, but for every $r \geq 1$.
Note that in the present section all homotopy translation arcs will be non generalized ones.
We choose once and for all a Brouwer homeomorphism $h$, and we introduce the following definitions.
A family  $\cO_{1}, \dots, \cO_{r}$ is  \emph{alternating} if there exists a family of  $2r$ pairwise disjoint (non generalized) proper backward and forward  homotopy streamlines as in the conclusion of Proposition~\ref{prop.half-proper-streamlines}. This family \emph{satisfies the uniqueness of homotopy translation arcs} if for every point $x \in \cO_{1} \cup \cdots \cup \cO_{r}$ there exists a unique homotopy class, relative to these orbits, of homotopy translation arcs joining $x$ and $h(x)$.

\begin{lemm}\label{lemm.uniqueness}
Every family of  alternating orbits satisfies the uniqueness of homotopy translation arcs.
\end{lemm}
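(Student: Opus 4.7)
The plan is to use the hyperbolic geometry of $\bbR^2 \setminus \cO$ together with the rigid structure imposed by the alternating streamline system. By the straightening principle (Corollary~\ref{coro.straightening}), I would first replace $h$ by a representative $h'$ of its mapping class that globally preserves each of the $2r$ proper homotopy streamlines $A_i^-, A_i^+$. Given two homotopy translation arcs $\alpha, \beta$ from $x \in \cO_i$ to $h(x)$, I would put them in minimal position with every streamline (using the second item of Lemma~\ref{lemm.straightening}) and pass to their geodesic representatives $\alpha^\sharp, \beta^\sharp$; by Lemma~\ref{lemm.straightening} again, I may further arrange that all geodesic iterates $(h')^n(\alpha^\sharp)$ are simultaneously put into minimal position with the $A_j^\pm$.

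The heart of the argument is a confinement step: to show that $\alpha^\sharp$ (and symmetrically $\beta^\sharp$) is contained in the closure of a single complementary component $U$ of the union $\bigcup_{j,\pm} A_j^\pm$. The $h'$-invariance of each streamline implies that any essential transverse crossing of $\alpha^\sharp$ with a streamline $A_j^\pm$ persists in every iterate $(h')^n(\alpha^\sharp)$, at the point $(h')^n(p)$ where $p$ is the original crossing. The alternating cyclic order of the streamlines at infinity together with the properness of the $A_j^\pm$ forces the endpoints of $(h')^n(\alpha^\sharp)$, which lie on $\cO_i \subset A_i^- \cup A_i^+$, to march off to the two $\cO_i$-ends of infinity as $n \to \pm\infty$, while each iterate still transversally pierces $A_j^\pm$. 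A topological linking argument then produces two distinct iterates whose geodesic representatives must essentially cross, contradicting the homotopy-disjointness of $\alpha$ from $h^n(\alpha)$ (which, via minimal position of geodesics, means $\alpha^\sharp \cap (h')^n(\alpha^\sharp) \subset \cO$).

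Once $\alpha^\sharp$ is confined to $\bar U$, the Schoenflies-Homma theorem (Appendix~A) allows us to rectify $\bar U$ to a standard model, a closed half-plane or strip with boundary a union of streamline arcs, possibly punctured by the finitely many exceptional points of $\cO$ that lie on no streamline. Inside this model the isotopy classes of arcs from $x$ to $h(x)$ avoiding the punctures are classified by their winding data, and the homotopy-disjointness condition on the $h'$-iterates rules out all but the ``direct'' class, the one isotopic to an arc of the boundary streamline containing both $x$ and $h(x)$. Alexander's trick (Lemma~\ref{lemma.Alexander}) then promotes the combinatorial equality to an actual isotopy rel $\cO$, yielding $[\alpha]=[\beta]$. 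The main obstacle will be the confinement step: turning the intuition that ``essential crossings of $\alpha^\sharp$ with a streamline create linking at infinity under iteration'' into a rigorous argument requires careful bookkeeping of the asymptotic directions of the geodesic iterates and a precise use of the alternating cyclic order at infinity; the exceptional $\cO$-points in the ``gap'' between $A_i^-$ and $A_i^+$ are a secondary nuisance that must be absorbed into the combinatorial classification.
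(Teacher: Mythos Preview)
Your approach is genuinely different from the paper's, which proceeds by induction on $r$: the inductive step uses a reducing line (point~2 of Proposition~\ref{prop.reducing-line}) to split the alternating family into two smaller alternating subfamilies, applies the induction hypothesis on each side to show that every backward-proper arc $\alpha_i^-$ is also forward-proper, and thus obtains \emph{full} proper homotopy streamlines containing all of $\cO$; uniqueness then follows from Lemma~\ref{lemma.uniqueness-hta} (whose proof, left to the reader, is the Franks--Handel argument you are gesturing at in your final paragraph).

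Your direct attack has two concrete gaps. First, in the confinement step you invoke ``$h'$-invariance of each streamline'', but Corollary~\ref{coro.straightening} applies only to full proper homotopy streamlines; with half-streamlines the best one can arrange is semi-invariance, $h'(A_j^+)\subset A_j^+$ and $h'^{-1}(A_j^-)\subset A_j^-$, so a crossing of $\alpha^\sharp$ with $A_j^+$ need not persist under \emph{backward} iteration, and your linking argument does not go through as stated. Second, even granting confinement, the region $\bar U$ contains the finitely many points of $\cO$ that lie on no half-streamline (those ``between'' $A_i^-$ and $A_i^+$); these punctures produce infinitely many isotopy classes of arcs from $x$ to $h(x)$ in $\bar U$, and ruling out all but one via homotopy-disjointness of iterates is essentially the content of the lemma itself, which you dismiss as a ``secondary nuisance'' without an argument. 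The paper's induction dissolves both problems simultaneously: once the $\alpha_i^-$ are shown to be forward-proper, the resulting full streamlines are genuinely $h'$-invariant and contain every point of $\cO$, so there are no exceptional punctures and Lemma~\ref{lemma.uniqueness-hta} applies cleanly.
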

Given the lemma, we consider a family of alternating orbits, and $2r$ backward and forward homotopy streamlines generated by homotopy translation arcs $\alpha_{i}^{\pm}$. By uniqueness of homotopy translation arcs, for every $i$, some iterate of $\alpha_{i}^{-}$ is homotopic to $\alpha_{i}^{+}$. In particular, $\alpha_{i}^{-}$ is both backward and forward proper. Furthermore, the proper streamlines generated by the $\alpha_{i}^{-}$'s are pairwise disjoint, since they arise from the iteration of pairwise disjoint backward homotopy streamlines. We apply Lemma~\ref{lem.flow-streamlines} to conclude that $[h;\cO_{1}, \dots , \cO_{r}]$ is a flow class, which completes the proof that the mapping class of $h$ relative to any alternating family of orbits is a flow class.

\bigskip	

Now let us prove the Lemma. The proof is by induction on the number $r$ of orbits.
For $r=1$ we have a translation class, and the uniqueness of homotopy translation arcs is Corollary 6.3 of~\cite{handel1999fixed} (see also Corollary 2.1 in~\cite{leroux2012introduction}).
Let $r \geq 2$ and let  $\cO_{1}, \dots, \cO_{r}$ be  a family of $r$ alternating orbits. We consider the $2r$ pairwise disjoint proper backward and forward  homotopy streamlines given by the definition. Let $\Delta$ be a reducing line provided by point 2 of Proposition~\ref{prop.reducing-line}.
By the ``straightening principle'' (Lemma~\ref{lemm.straightening2}) we may find some $h'$ in the mapping class $[h;\cO_{1}, \dots, \cO_{r}]$ such that $h'(\Delta) = \Delta$.
The topological line $\Delta$ splits the set $\cO= \cO_{1} \cup \cdots \cup \cO_{r}$ into two subsets, say $\cO = \cO' \sqcup \cO''$, each one consisting in less than $r$ orbits of $h$. Note that $h$ and $h'$ still have the same mapping class relative to $\cO'$.
Furthermore, since the reducing line $\Delta$ is disjoint from the backward and forward homotopy streamlines, it also splits the family $\cS$ of streamlines into two sub-families that we denote by $\cS'$ and $\cS''$. The key point is that $\cS'$ is an interval of $\cS$ in the cyclic order at infinity, and since
the backward and forward streamlines in $\cS$ alternate, this is still true in $\cS'$.
In particular, the family of orbits making up $\cO'$ is alternating.
 Choose some orbit $\cO_i$ in $\cO'$ and denote by $\alpha^-, \alpha^{+}$ the backward and forward proper homotopy translation arcs inducing the elements of $\cS'$ that meet $\cO_{i}$. By the induction hypothesis, the family of orbits in $\cO'$ satisfies the uniqueness of homotopy translation arcs, and thus  there is an iterate $h^{N}\alpha^{-}$ which is homotopic to $\alpha^{+}$ with respect to $\cO'$.
Obviously  $h'^{N}\alpha^{-}$ is also homotopic to $\alpha^{+}$  relative to $\cO'$. Since both arcs are disjoint from $\Delta$, this homotopy may be chosen to be disjoint from $\Delta$ (compose the homotopy with a retraction from the plane to the half plane delimited by $\Delta$ and containing both arcs). Then it is a homotopy  relative to $\cO$. Thus $h^{N} \alpha^{-}$ is homotopic to $\alpha^{+}$ relative  to $\cO$. In particular, the backward proper homotopy translation arc $\alpha^{-}$  is also forward proper. The same argument applies to $\cS''$. Finally the $r$ proper backward homotopy translation arcs provided by Proposition~\ref{prop.half-proper-streamlines} are also forward proper. Since the corresponding proper backward homotopy streamlines are pairwise disjoint, the proper homotopy streamlines are also pairwise disjoint. Note that the argument also shows that the union of the proper homotopy streamlines contain all the $A^{-}_{i}$'s and the $A^{+}_{i}$'s. We are now in a position to apply the following lemma, which shows that $\cO_{1}, \dots, \cO_{r}$ satisfies the uniqueness of homotopy translation arcs. This concludes the induction.

\begin{lemm}\label{lemma.uniqueness-hta}
Let $\cO_{1}, \dots, \cO_{r}$ be distinct orbits of $h$. Assume that there exists a proper homotopy translation arc $\alpha_{i}$ for each orbit $\cO_{i}$, such that the associated proper homotopy streamlines $A_{1}, \dots A_{r}$ are pairwise disjoint. Also assume that the positive and negative ends of these streamlines alternates in the cyclic order at infinity. 
Then  every (non generalized) homotopy translation arc $\alpha$ for $[h;\cO_{1}, \dots, \cO_{r}]$ is isotopic relative to $\cO_{1} \cup \dots \cup  \cO_{r}$ to some iterate of one of the $\alpha_{i}$'s.
\end{lemm}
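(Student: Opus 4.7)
The plan is to proceed by induction on the number $r$ of orbits. For the base case $r=1$, the hypothesis places us in the translation case (item~(1) of Proposition~\ref{prop.reducing-line}), so $[h;\cO_{1}]$ is a translation class, and the conclusion follows from Corollary~6.3 of~\cite{handel1999fixed}: uniqueness of homotopy translation arcs for a single-orbit translation class.

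For the inductive step I will exploit the fact that the hypothesis---pairwise disjoint proper streamlines with $2r$ ends alternating at infinity---corresponds to the alternating case $r'=r$ of Proposition~\ref{prop.reducing-line}, which provides a reducing line $\Delta$ disjoint from every $A_i$. This line partitions the orbits into two non-empty subfamilies $\cO=\cO'\sqcup\cO''$ and splits the streamlines accordingly. The two ends of $\Delta$ at infinity cut the cyclic order on the $2r$ ends of the $A_i$'s into two sub-intervals, and since the alternation of $+/-$ is inherited on sub-intervals, the hypothesis of the lemma is satisfied separately on each side, so the induction hypothesis will apply to each side.

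Given a homotopy translation arc $\alpha$ with endpoints on $\cO_{i_{0}}\subset\cO'$ (say), the plan is then to isotope $\alpha$ rel $\cO$ to an arc contained in the $\cO'$-component of $\bbR^2\setminus\Delta$; the induction hypothesis applied on that side delivers the desired isotopy to an iterate of $\alpha_{i_{0}}$. Thus the inductive step reduces to showing that $\alpha$ is homotopically disjoint from $\Delta$.

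I expect this disjointness step to be the main obstacle. My plan for it: combine Corollary~\ref{coro.straightening} and Lemma~\ref{lemm.straightening2} to find $h''\in[h;\cO_{1},\dots,\cO_{r}]$ preserving simultaneously $\Delta$, the geodesic $\Delta^\sharp$, and each $A_i$. Since $h''$ is fixed point free and preserves each side of $\Delta$ (because it preserves the partition of the $A_i$'s, and that partition is determined by the sides), the restriction $h''|_{\Delta^\sharp}$ is orientation preserving, fixed point free, and hence conjugate to a translation. Assume toward a contradiction that $\alpha^\sharp$ meets $\Delta^\sharp$ transversally. The homotopy translation arc hypothesis gives that the geodesics $(h''^{n}\alpha)^\sharp$ for $n\in\bbZ$ are pairwise homotopically disjoint; straightening this family simultaneously produces an injective bi-infinite concatenation $C$, joined at points of $\cO_{i_{0}}$, whose intersection with $\Delta^\sharp$ is an infinite set on which the translation $h''|_{\Delta^\sharp}$ acts. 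Since every sub-arc of $C$ has both endpoints on the $\cO'$-side of $\Delta$, and the crossings of consecutive sub-arcs have to be compatible with the translation structure on $\Delta^\sharp$, a topological analysis of how a simple curve can repeatedly cross $\Delta^\sharp$ in a pattern that extends consistently under the translation should yield a contradiction with the injectivity of $C$. This combinatorial/topological step is the delicate core of the argument; once it is established, pushing $\alpha$ off a tubular neighborhood of $\Delta$ into the $\cO'$-side and invoking the inductive hypothesis is routine.
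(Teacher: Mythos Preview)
The paper does not supply its own proof of this lemma; it simply says ``the proof of this Lemma is virtually the same as the proof of Lemma 8.7 (2) in~\cite{franks2003periodic}, details are left to the reader.'' So there is no argument in the paper to compare against directly, only the Franks--Handel proof it points to.

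Your inductive strategy via a reducing line is natural, and in fact it mirrors almost exactly the paper's proof of the neighbouring Lemma~\ref{lemm.uniqueness}. There the paper splits along a reducing line, applies the induction hypothesis on each side to match up backward and forward half-streamlines, and \emph{then invokes the present Lemma~\ref{lemma.uniqueness-hta} as a black box} to conclude uniqueness. So the paper's logical structure deliberately keeps Lemma~\ref{lemma.uniqueness-hta} as a separate, non-inductive ingredient supplied by Franks--Handel.

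Your proposal tries instead to absorb Lemma~\ref{lemma.uniqueness-hta} into the same induction. The price is exactly the step you flag as the ``delicate core'': showing that an arbitrary homotopy translation arc $\alpha$ is homotopically disjoint from the reducing line $\Delta$. Your sketch for this does not close the gap. The concatenation $C=\bigcup_n (h^n\alpha)^\sharp$ need not be proper (you do not yet know $\alpha$ is forward or backward proper), and an injective non-proper curve can perfectly well cross a line in an infinite, translation-equivariant pattern; ``a topological analysis should yield a contradiction with the injectivity of $C$'' is an expectation, not an argument. The claim that $h''$ is fixed point free on $\Delta^\sharp$ also needs justification, since $h''$ is merely in the mapping class of $h$. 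More fundamentally, proving that $\alpha$ cannot cross $\Delta$ is essentially proving that $\alpha$ is already the expected sub-arc of $A_{i_0}$ --- which is the content of the lemma --- so the induction as set up does not actually reduce the difficulty.

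The Franks--Handel argument for their Lemma~8.7(2) proceeds directly with the given streamlines $A_1,\dots,A_r$ rather than an auxiliary reducing line: one analyses how $\alpha^\sharp$ can meet $\bigcup_i A_i$, using that the geodesics $(h^n\alpha)^\sharp$ are pairwise disjoint and that the alternation hypothesis constrains the topology of the complementary regions of $\bigcup_i A_i$. That direct geometric analysis is what the paper expects you to reproduce.
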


The proof of this Lemma is virtually the same as the proof of Lemma 8.7 (2) in~\cite{franks2003periodic}, details are left to the reader.
Note that the hypotheses of the lemma is satisfied by the mapping class of a translation relative to a single orbit, and by the mapping class $[R]$ that occurred in section~\ref{section.index}.

\subsection{The remaining case}
Note that when $r=1$ we are in the translation case. If $r=2$ we are either in the translation or in the alternating case, according to whether $r' = 1$ or $r'=2$. We are left with the case when $r=3$ and $r'=2$, so that we have two backward homotopy streamlines $A^-,B^{-}$ and  two forward homotopy streamlines $A^+,B^{+}$. We also denote by $\alpha^\pm, \beta^\pm$ the corresponding generalized homotopy translation arcs.
 We choose the notation so that $A^{-}$ and $A^{+}$ are the ones that meet two of the orbits $\cO_{i}$, whereas $B^{-}$ and $B^{+}$ each meet a single orbit. There are two cases to consider (see Figure~\ref{figure.subcases}): either $B^{-}$ and $B^{+}$ meet the same orbit, or not.
In any case, since the backward and forward streamlines alternate at infinity, up to conjugating by an orientation reversing homeomorphism, we may assume that the cyclic order is  $A^{+},A^{-}, B^{+}, B^{-}$. 

\begin{figure}[h]
\begin{center}
\begin{tikzpicture}[scale=0.19]
\tikzstyle{fleche}=[>=latex,->]
\tikzstyle{fleche passee}=[>=latex,>-]

\begin{scope}[xshift=-20cm]
\tiny
\path  (14,10) -- (3,3) 	node[pos=0,name=A0,above right] {\small $B^{-}$} 
					node[inner sep=0mm,minimum size=1.5mm,pos=0.125,name=A1] {$\times$} 
					node[inner sep=0mm,minimum size=1.5mm,pos=0.375,name=A2] {$\times$} 
					node[inner sep=0mm,minimum size=1.5mm,pos=0.625,name=A3] {$\times$} 
					node[inner sep=0mm,minimum size=1.5mm,pos=0.875,name=A4]  {$\times$} ;
\draw [dotted] (A0) -- (A1) ;
\draw [fleche] (A1) -- (A2) -- (A3) -- (A4) ;

\path  (-3,3) -- (-14,10) 	node[pos=0.125,inner sep=0mm,minimum size=1mm,name=B1] {$\bullet$}  
					node[pos=0.25,inner sep=0mm,minimum size=1mm,name=B2] {$\circ$} 
					node[pos=0.375,inner sep=0mm,minimum size=1mm,name=B3] {$\bullet$} 
					node[pos=0.5,inner sep=0mm,minimum size=1mm,name=B4] {$\circ$} 
					node[pos=0.625,inner sep=0mm,minimum size=1mm,name=B5] {$\bullet$} 
					node[pos=0.75,inner sep=0mm,minimum size=1mm,name=B6] {$\circ$} 
					node[pos=0.875,inner sep=0mm,minimum size=1mm,name=B7] {$\bullet$} 
					node[pos=1,name=B8,above left] {\small $A^{+}$}  ;
\draw [fleche passee] (B1) -- (B2) ;
\draw (B2) -- (B3) -- (B4) -- (B5) -- (B6) -- (B7) ;
\draw [dotted] (B8) -- (B7) ;

\path  (-3,-3) -- (-14,-10) 	node[pos=0.125,inner sep=0mm,minimum size=1mm,name=B1] {$\bullet$}  
					node[pos=0.25,inner sep=0mm,minimum size=1mm,name=B2] {$\circ$} 
					node[pos=0.375,inner sep=0mm,minimum size=1mm,name=B3] {$\bullet$} 
					node[pos=0.5,inner sep=0mm,minimum size=1mm,name=B4] {$\circ$} 
					node[pos=0.625,inner sep=0mm,minimum size=1mm,name=B5] {$\bullet$} 
					node[pos=0.75,inner sep=0mm,minimum size=1mm,name=B6] {$\circ$} 
					node[pos=0.875,inner sep=0mm,minimum size=1mm,name=B7] {$\bullet$} 
					node[pos=1,name=B8,below left] {\small $A^-$} ;
\draw [fleche] (B2) -- (B1) ;
\draw (B2) -- (B3) -- (B4) -- (B5) -- (B6) -- (B7) ;
\draw [dotted] (B8) -- (B7) ;

\path  (14,-10) -- (3,-3) 	node[pos=0,name=A0,below right] {\small $B^{+}$} 
					node[inner sep=0mm,minimum size=1.5mm,pos=0.125,name=A1] {$\times$} 
					node[inner sep=0mm,minimum size=1.5mm,pos=0.375,name=A2] {$\times$} 
					node[inner sep=0mm,minimum size=1.5mm,pos=0.625,name=A3] {$\times$} 
					node[inner sep=0mm,minimum size=1.5mm,pos=0.875,name=A4]  {$\times$} ;
\draw [dotted] (A0) -- (A1) ;
\draw (A1) -- (A2) -- (A3)  ;
\draw [fleche passee] (A4)--(A3) ;

\end{scope}


\begin{scope}[xshift=20cm]
\tiny
\path  (14,10) -- (3,3) 	node[pos=0,name=A0,above right] {\small $B^{-}$}
					node[inner sep=0mm,minimum size=1.5mm,pos=0.125,name=A1] {$\times$} 
					node[inner sep=0mm,minimum size=1.5mm,pos=0.375,name=A2] {$\times$} 
					node[inner sep=0mm,minimum size=1.5mm,pos=0.625,name=A3] {$\times$} 
					node[inner sep=0mm,minimum size=1.5mm,pos=0.875,name=A4]  {$\times$} ;
\draw [dotted] (A0) -- (A1) ;
\draw [fleche] (A1) -- (A2) -- (A3) -- (A4) ;

\path  (-3,3) -- (-14,10) 	node[pos=0.125,inner sep=0mm,minimum size=1mm,name=B1] {$\times$}  
					node[pos=0.25,inner sep=0mm,minimum size=1mm,name=B2] {$\circ$} 
					node[pos=0.375,inner sep=0mm,minimum size=1mm,name=B3] {$\times$} 
					node[pos=0.5,inner sep=0mm,minimum size=1mm,name=B4] {$\circ$} 
					node[pos=0.625,inner sep=0mm,minimum size=1mm,name=B5] {$\times$} 
					node[pos=0.75,inner sep=0mm,minimum size=1mm,name=B6] {$\circ$} 
					node[pos=0.875,inner sep=0mm,minimum size=1mm,name=B7] {$\times$} 
					node[pos=1,name=B8,above left] {\small $A^{+}$}  ;
\draw [fleche passee] (B1) -- (B2) ;
\draw (B2) -- (B3) -- (B4) -- (B5) -- (B6) -- (B7) ;
\draw [dotted] (B8) -- (B7) ;

\path  (-3,-3) -- (-14,-10) 	node[pos=0.125,inner sep=0mm,minimum size=1mm,name=B1] {$\bullet$}  
					node[pos=0.25,inner sep=0mm,minimum size=1mm,name=B2] {$\circ$} 
					node[pos=0.375,inner sep=0mm,minimum size=1mm,name=B3] {$\bullet$} 
					node[pos=0.5,inner sep=0mm,minimum size=1mm,name=B4] {$\circ$} 
					node[pos=0.625,inner sep=0mm,minimum size=1mm,name=B5] {$\bullet$} 
					node[pos=0.75,inner sep=0mm,minimum size=1mm,name=B6] {$\circ$} 
					node[pos=0.875,inner sep=0mm,minimum size=1mm,name=B7] {$\bullet$} 
					node[pos=1,name=B8,below left] {\small $A^-$} ;
\draw [fleche] (B2) -- (B1) ;
\draw (B2) -- (B3) -- (B4) -- (B5) -- (B6) -- (B7) ;
\draw [dotted] (B8) -- (B7) ;

\path  (14,-10) -- (3,-3) 	node[pos=0,name=A0,below right] {\small $B^{+}$} 
					node[inner sep=0mm,minimum size=1.5mm,pos=0.125,name=A1] {$\bullet$} 
					node[inner sep=0mm,minimum size=1.5mm,pos=0.375,name=A2] {$\bullet$} 
					node[inner sep=0mm,minimum size=1.5mm,pos=0.625,name=A3] {$\bullet$} 
					node[inner sep=0mm,minimum size=1.5mm,pos=0.875,name=A4]  {$\bullet$} ;
\draw [dotted] (A0) -- (A1) ;
\draw (A1) -- (A2) -- (A3)  ;
\draw [fleche passee] (A4)--(A3) ;

\end{scope}
\end{tikzpicture}
\end{center}
\caption{The two sub-cases of the remaining case}
\label{figure.subcases}
\end{figure}
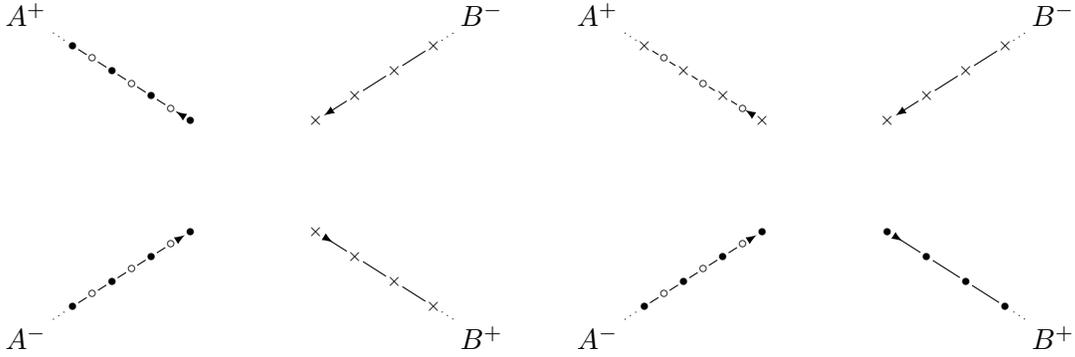

Consider the first case, and choose the numbering of the $\cO_{i}$'s so that $B^{-}$ and $B^{+}$ meet $\cO_{1}$. Let $\Delta$ be a reducing line provided by   Proposition~\ref{prop.reducing-line}, and which is disjoint from $A^{-}, B^{-}$ and $B^{+}$ (note though that $\Delta$ may intersect $A^{+}$). By definition, the $\cO_{i}$'s are not in the same half-plane delimited by $\Delta$; since $A^-$ is disjoint from $\Delta$ and meets $\cO_{2}$ and $\cO_{3}$, we conclude that $\Delta$ separates $A^-$ from $B^-$ and $B^+$. In particular $\alpha^-$ is separated from $B^+$, and $\beta^+$ is separated from $A^-$.
Lemma~\ref{lem.proper} below implies that $\alpha^{-}$ is forward proper, and, when applied symmetrically to $h^{-1}$, that $\beta^{+}$ is backward proper. Denote the corresponding proper homotopy streamlines by $A$ and $B$. Note that $B$ contains $\cO_{1}$ and $A$ contains $\cO_{2}$ and $\cO_{3}$.
Since the reducing line $\Delta$ separates $\alpha^-$ from $\beta^+$, $A$ and $B$ are disjoint. We conclude that $[h;\cO_{1}, \cO_{2}, \cO_{3}]$ is a flow class by applying Lemma~\ref{lem.flow-streamlines2}, which completes the proof in this sub-case. 

\begin{lemm}\label{lem.proper}
Let $\alpha$ be a generalized homotopy translation arc.
Assume that there exist reducing lines $\Delta_{1}, \dots , \Delta_{k}$ which are pairwise disjoint and whose union separates $\alpha$ from all but one of the $A^{+}_{j}$'s provided by Proposition~\ref{prop.half-proper-streamlines}.
Then $\alpha$ is forward proper.
\end{lemm}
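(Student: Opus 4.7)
The plan is to straighten the setup so that a single element of $[h;\cO_{1},\dots,\cO_{r}]$ simultaneously preserves every reducing line $\Delta_{i}$, and then to exploit the hypothesis to channel all forward dynamics of $\alpha$ into the one allowed forward streamline $A^{+}_{j_{0}}$.

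First, by the straightening principle (Lemma~\ref{lemm.straightening2}), I choose $h'\in[h;\cO_{1},\dots,\cO_{r}]$ with $h'(\Delta_{i})=\Delta_{i}$ for every $i$. The connected component $U$ of $\RR^{2}\setminus\bigcup_{i}\Delta_{i}$ containing (a geodesic representative of) $\alpha$ is then $h'$-invariant, so every forward iterate $(h')^{n}(\alpha)$ stays in $U$. Since orbits are disjoint from the $\Delta_{i}$'s, $\cO\cap U$ is a union of full $h'$-orbits, and the hypothesis says that the only $A^{+}_{j}$ contained in $U$ is $A^{+}_{j_{0}}$.

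Second, I show that the forward orbit of every $p\in\cO\cap U$ eventually lies on $A^{+}_{j_{0}}$. Indeed, by Proposition~\ref{prop.half-proper-streamlines} this forward orbit enters some $A^{+}_{j}$; but $A^{+}_{j}$ is connected and disjoint from every $\Delta_{i}$, hence is contained in a single component of $\RR^{2}\setminus\bigcup_{i}\Delta_{i}$, which the orbit of $p$ identifies as $U$. Thus $A^{+}_{j}=A^{+}_{j_{0}}$. Applied to the endpoints and the intermediate breakpoints of $\alpha=\alpha_{1}\star\cdots\star\alpha_{k}$, this gives that for $n$ large every concatenation point of $(h')^{n}(\alpha)$ lies on $A^{+}_{j_{0}}$, and these points tend to $\infty$ because $A^{+}_{j_{0}}$ is a proper ray.

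Finally, for forward properness itself, I straighten once more (Lemma~\ref{lemm.straightening2}) applied to the union of the $\Delta_{i}$'s together with the geodesic segments making up $A^{+}_{j_{0}}$, so that $h'$ acts as a shift along $A^{+}_{j_{0}}$. Given a compact $K$, I choose $N$ so that the sub-ray of $A^{+}_{j_{0}}$ beyond its $N$-th orbit point is disjoint from $K$, then take $n$ large enough that every breakpoint of $(h')^{n}(\alpha)$ lies past this sub-ray. It remains to show that $(h')^{n}(\alpha)$ is isotopic, relative to $\cO$, to a curve disjoint from $K$; equivalently, by Lemma~\ref{lemm.straightening}(3), that the geodesic realizations $((h')^{n}\alpha_{j})^{\sharp}$ themselves escape every compact set. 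The main obstacle is exactly this last step. I expect to handle it by contradiction: if the geodesic iterates accumulated at some point of $U$, diagonal extraction would yield a proper geodesic line $\Lambda\subset U\setminus\cO$ invariant under the limiting dynamics, and $\Lambda$ would serve as an additional reducing line inside $U$ separating $A^{+}_{j_{0}}$ from an orbit whose forward trajectory must meet it — contradicting both the uniqueness of $A^{+}_{j_{0}}$ in $U$ and the alternating cyclic order at infinity provided by Proposition~\ref{prop.half-proper-streamlines}.
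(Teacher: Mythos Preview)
Your approach diverges substantially from the paper's, and the final step contains a genuine gap.

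The paper's proof is a two-line contradiction argument that invokes Handel's fitted family machinery directly (Theorem~\ref{theo.fitted}). If $\alpha$ were not forward proper, its forward iterates would produce a fitted family in the Brouwer subsurface $W$ that does not disappear under iteration; Theorem~\ref{theo.fitted} then yields an arc $t$ in $W$ with endpoints on two \emph{distinct} boundary components $L_{i_{0}}^{+}$ and $L_{i_{1}}^{+}$. In particular the iterates of $\alpha$ fail to be homotopically disjoint from at least two of the forward streamlines $A_{j}^{+}$. But the reducing lines $\Delta_{1},\dots,\Delta_{k}$ force all iterates of $\alpha$ to be homotopically disjoint from every $A_{j}^{+}$ except one, a contradiction. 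The heavy lifting is entirely outsourced to Theorem~\ref{theo.fitted}.

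Your route---straightening the $\Delta_{i}$'s, trapping everything in a single component $U$, and tracking the breakpoints of $(h')^{n}(\alpha)$ into $A_{j_{0}}^{+}$---is sound up through the second paragraph, and it is a reasonable way to see \emph{why} the lemma should hold. But the third paragraph does not close the argument. Knowing that the endpoints of the geodesic pieces $((h')^{n}\alpha_{j})^{\sharp}$ escape to infinity does not by itself prevent those geodesics from accumulating in $U$: this is precisely the phenomenon that fitted families are designed to detect and control. Your proposed contradiction (``diagonal extraction would yield a proper geodesic line $\Lambda\subset U\setminus\cO$ invariant under the limiting dynamics, serving as an additional reducing line\dots'') is not justified. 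There is no reason the accumulation set of a sequence of geodesic arcs with escaping endpoints should be a single proper line, no clear sense in which it would be ``invariant under the limiting dynamics'' (you are iterating a fixed map, not passing to a limit of maps), and no mechanism offered for why such a $\Lambda$ would separate $A_{j_{0}}^{+}$ from an orbit that must meet it. What actually happens in the non-proper case is that the iterates settle into the pattern described by Theorem~\ref{theo.fitted}: an essential arc $t$ in $W$ that recurs under iteration and joins two distinct $L_{i}^{+}$'s. Extracting that structure is the content of Handel's Theorem~5.5, and your sketch does not reproduce it.

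In short: either invoke Theorem~\ref{theo.fitted} as the paper does, or be prepared to redo a substantial piece of the fitted-family analysis to justify your limiting argument.
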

The proof of the Lemma is provided in the Appendix.

We turn to the second sub-case. We apply Proposition~\ref{prop.reducing-line} to get a reducing line $\Delta$ which is disjoint from $A^-, B^{-}$ and $B^{+}$. 
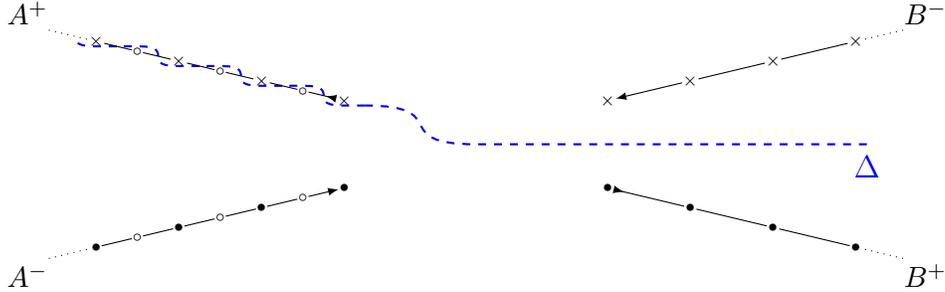
\begin{figure}[h]
\begin{center}
\begin{tikzpicture}[xscale=0.4, yscale=0.15]
\tikzstyle{fleche}=[>=latex,->]
\tikzstyle{fleche passee}=[>=latex,>-]

\tiny

\begin{scope}[color=blue]
\path   (-3,3) -- (-14,10) 	\foreach \q in {0.1875,0.4375,0.6875,0.9375} 
					{node[pos=\q,name=A\q] {} }  
					\foreach \q in {0.0625,0.3125,0.5625,0.8125} 
					{node[pos=\q,name=B\q] {} }  ;
					
\foreach \p / \q in {0.0625/0.1875,0.3125/0.4375,0.5625/0.6875,0.8125/0.9375} 
\draw [thick,dashed] ({B\p}.center) .. controls +(left:1) and +(down:1) ..  ({A\q}.center) ;

\foreach \q / \p in {0.1875/0.3125,0.4375/0.5625,0.6875/0.8125} 
\draw [thick,dashed] ({A\q}.center) .. controls +(up:1) and +(right:1) ..  ({B\p}.center) ;

\normalsize
\draw [thick,dashed] ({B0.0625}.center) .. controls +(right:3) and +(left:3) .. (0,0) -- (13,0) node [below] {$\Delta$} ;
\end{scope}

\path  (14,10) -- (3,3) 	node[pos=0,name=A0,above right] {\small $B^{-}$} 
					node[inner sep=0mm,minimum size=1.5mm,pos=0.125,name=A1] {$\times$} 
					node[inner sep=0mm,minimum size=1.5mm,pos=0.375,name=A2] {$\times$} 
					node[inner sep=0mm,minimum size=1.5mm,pos=0.625,name=A3] {$\times$} 
					node[inner sep=0mm,minimum size=1.5mm,pos=0.875,name=A4]  {$\times$} ;
\draw [dotted] (A1)+(11/7,1) -- (A1) ;
\draw [fleche] (A1) -- (A2) -- (A3) -- (A4) ;

\path  (-3,3) -- (-14,10) 	node[pos=0.125,inner sep=0mm,minimum size=1mm,name=B1] {$\times$}  
					node[pos=0.25,inner sep=0mm,minimum size=1mm,name=B2] {$\circ$} 
					node[pos=0.375,inner sep=0mm,minimum size=1mm,name=B3] {$\times$} 
					node[pos=0.5,inner sep=0mm,minimum size=1mm,name=B4] {$\circ$} 
					node[pos=0.625,inner sep=0mm,minimum size=1mm,name=B5] {$\times$} 
					node[pos=0.75,inner sep=0mm,minimum size=1mm,name=B6] {$\circ$} 
					node[pos=0.875,inner sep=0mm,minimum size=1mm,name=B7] {$\times$} 
					node[pos=1,name=B8,above left] {\small $A^{+}$}  ;
\draw  [fleche passee](B1) -- (B2) ;
\draw (B2) -- (B3) -- (B4) -- (B5) -- (B6) -- (B7) ;
\draw [dotted] (B7)+(-11/7,1) -- (B7) ;

\path  (-3,-3) -- (-14,-10) 	node[pos=0.125,inner sep=0mm,minimum size=1mm,name=B1] {$\bullet$}  
					node[pos=0.25,inner sep=0mm,minimum size=1mm,name=B2] {$\circ$} 
					node[pos=0.375,inner sep=0mm,minimum size=1mm,name=B3] {$\bullet$} 
					node[pos=0.5,inner sep=0mm,minimum size=1mm,name=B4] {$\circ$} 
					node[pos=0.625,inner sep=0mm,minimum size=1mm,name=B5] {$\bullet$} 
					node[pos=0.75,inner sep=0mm,minimum size=1mm,name=B6] {$\circ$} 
					node[pos=0.875,inner sep=0mm,minimum size=1mm,name=B7] {$\bullet$} 
					node[pos=1,name=B8,below left] {\small $A^-$} ;
\draw [fleche] (B2) -- (B1) ;
\draw (B2) -- (B3) -- (B4) -- (B5) -- (B6) -- (B7) ;
\draw [dotted] (B7)+(-11/7,-1) -- (B7) ;

\path  (14,-10) -- (3,-3) 	node[pos=0,name=A0,below right] {\small $B^{+}$} 
					node[inner sep=0mm,minimum size=1.5mm,pos=0.125,name=A1] {$\bullet$} 
					node[inner sep=0mm,minimum size=1.5mm,pos=0.375,name=A2] {$\bullet$} 
					node[inner sep=0mm,minimum size=1.5mm,pos=0.625,name=A3] {$\bullet$} 
					node[inner sep=0mm,minimum size=1.5mm,pos=0.875,name=A4]  {$\bullet$} ;
\draw [dotted] (A1)+(11/7,-1) -- (A1) ;
\draw (A1) -- (A2) -- (A3)  ;
\draw [fleche passee] (A4)--(A3) ;

\end{tikzpicture}
\end{center}
\caption{The situation on the second sub-case of the remaining case \label{figure.remaining}}
\end{figure}
Now $\Delta$ must separate $B^{-}$ from $B^{+}$ and $A^{-}$ (see Figure~\ref{figure.remaining}).
Lemma~\ref{lem.proper} implies that $\beta^{-}$ is forward proper, and that $\beta^{+}$ is backward proper. We denote by $B_{1}$ and $B_{2}$ the corresponding proper homotopy streamlines, and we observe that they are disjoint.
Up to renumbering, we may assume that $\cO_{3}$ does not belong to $B_{1}$ nor to $B_{2}$.  We denote by $U$  the unique connected component of the complement of $B_{1} \cup B_{2}$ 
which contains $\cO_{3}$.
By the straightening principle (Corollary~\ref{coro.straightening}), we may find a homeomorphism $h'$ in the class $[h;\cO_{1}, \cO_{2}, \cO_{3}]$ that fixes $B_{1}$ and $B_{2}$. The open set $U$ is homeomorphic to the plane, and $h'(U)=U$.
The mapping class $[h'_{\mid U}, \cO_{3}]$ is conjugate to $[h,\cO_{3}]$ (see~\cite{handel1999fixed}, p238) which is a Brouwer mapping class, thus it admits a homotopy translation arc. In other words, the mapping class $[h', \cO_{3}]$ admits a homotopy translation arc  $\alpha$ which is included in $U$. Thus it is also a homotopy translation arc for $[h';\cO_{1}, \cO_{2}, \cO_{3}] = [h;\cO_{1}, \cO_{2}, \cO_{3}]$.
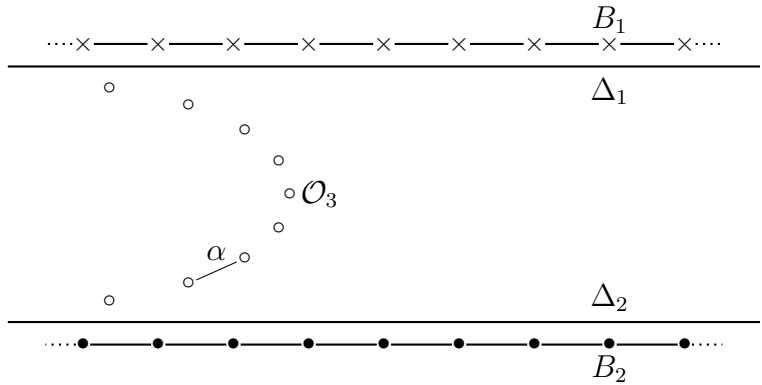
\begin{figure}[h!]
\begin{center}
\begin{tikzpicture}[scale=1]
\tikzstyle{fleche}=[>=latex,->]
\tikzstyle{fleche passee}=[>=latex,>-]
\foreach \k in {1,2,...,9} 
	{\draw (\k,0) node [name=A\k,,inner sep=0mm] {\footnotesize $\bullet$} ; \draw (\k,4) node  [name=B\k,,inner sep=0mm] {\footnotesize$\times$} ; }
\path (0,0.5) .. controls +(right:5) and +(right:5) .. (0,3.5) 
	\foreach \p in {1,2,...,9} {node[pos=\p/10,name=C\p,minimum size=1.5mm,inner sep=1mm] {}} ;
 \foreach \p in {1,2,...,9} { \node at (C\p) [inner sep=0mm] {\footnotesize $\circ$} ;} 
\node at (C5) [right] {$\cO_{3}$};
\draw (C2) -- (C3) node [midway, above] {$\alpha$} ;
\draw [thick] (A1) -- (A2) -- (A3) -- (A4) -- (A5) -- (A6) --(A7) -- (A8) -- (A9) ;
\draw [thick, dotted] (A9) -- (9.5,0) ;
\draw [thick, dotted] (A1) -- (0.5,0) ;
\node at (A8) [below] {$B_{2}$};
\draw [thick] (0,0.3) -- (10,0.3) node [pos=0.8,above] {$\Delta_{2}$} ;
\draw [thick] (B9) --(B8) -- (B7) -- (B6) --(B5) -- (B4) -- (B3) --(B2) -- (B1) ;
\draw [thick, dotted] (B9) -- (9.5,4) ;
\draw [thick, dotted] (B1) -- (0.5,4) ;
\node at (B8) [above] {$B_{1}$};
\draw [thick] (0,3.7) -- (10,3.7) node [pos=0.8,below] {$\Delta_{1}$} ;
\end{tikzpicture} 
\end{center}
\caption{Situation in the second sub-case when $U$ is the middle component}
\label{figure.last-remaining}
\end{figure}
We consider first the case when $U$ is the ``middle'' component, which means that its boundary is $B_{1} \cup B_{2}$. \footnote{Actually it may be proved that this is always the case, so the second case never appears, but it is easier to prove that the second case would also yield a flow class.}
 Let $\Delta_{1},\Delta_{2}$ be two reducing lines provided by Lemma~\ref{lemm.reducing-line}, the first one separates $\cO_{1}$ from $\cO_{2} \cup \cO_{3}$ and the second one separates $\cO_{2}$ from $\cO_{1} \cup \cO_{3}$ (see figure~\ref{figure.last-remaining}). The set $\Delta_{1} \cup \Delta_{2}$ separates $\alpha$ from $B_{1}$ and $B_{2}$, which respectively contain $B^-$ and $B^+$. Two applications of Lemma~\ref{lem.proper} entail that $\alpha$ is both forward and backward proper. Let $A$ be the proper homotopy streamline generated by $\alpha$. This streamline is disjoint from $B_{1}$ and $B_{2}$, and we conclude from Lemma~\ref{lem.flow-streamlines} that $[h;\cO_{1}, \cO_{2}, \cO_{3}]$  is a flow class.

In the case when $U$ is not the middle component,  we may assume that its boundary is $B_{1}$. Likewise, we consider a reducing line $\Delta$ which is contained in a small tubular neighborhood of $B_{1}$ and which separates $B_{1}$ and $B_{2}$ from $\cO_{3}$. The same reasoning as in the previous case provides a proper homotopy streamline $A$ containing $\cO_{3}$ and disjoint from $B_{1}$ and $B_{2}$, as wanted.

\subsection{Explicit description}
\label{subsection.explicit}
We provide a more explicit formulation of Theorem~\ref{theo.3}.
\begin{coro} Up to a renumbering of the orbits and a change of orientation of the plane, 
\begin{itemize}
\item every Brouwer mapping class $[h;\cO_{1}, \cO_{2}]$  is conjugate to the mapping class of $T$ or $R$ relative to $\bbZ \times \{1,2\}$,

\item every Brouwer mapping class $[h;\cO_{1}, \cO_{2}, \cO_{3}]$  is conjugate to one of the five mapping classes described on figure~\ref{figure.explicit}.
\end{itemize}
\end{coro}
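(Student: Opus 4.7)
The plan is to deduce this explicit description from Theorem~\ref{theo.3} combined with the straightening principle of Corollary~\ref{coro.straightening} and a topological classification of finite families of pairwise disjoint topological lines in the plane. Let $r\in\{2,3\}$ and fix a Brouwer mapping class $[h;\cO_{1},\dots,\cO_{r}]$. By Theorem~\ref{theo.3} this is a fixed point free flow class, and Lemma~\ref{lem.flow-streamlines} provides pairwise disjoint proper geodesic homotopy streamlines $\Gamma_{1},\dots,\Gamma_{r}$ with $\cO_{i}\subset \Gamma_{i}$. Corollary~\ref{coro.straightening} yields a representative $h'$ of the mapping class such that $h'(\Gamma_{i})=\Gamma_{i}$ for every $i$.

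The next step is to classify the topological configurations of $r$ pairwise disjoint topological lines in the plane, up to orientation preserving homeomorphism and relabeling of the lines. For $r=2$ there is a unique configuration, since by the Schoenflies-Homma theorem (appendix) any two such lines can be simultaneously straightened to two horizontal parallel straight lines. For $r=3$ there are exactly two configurations: either one of the three lines separates the other two (the \emph{parallel} case), or none of them does, in which case the three lines bound a unique disk-like complementary region on each side (the \emph{tripod} case). In either case Schoenflies-Homma provides an orientation preserving homeomorphism $\Phi$ carrying $(\Gamma_{1},\dots,\Gamma_{r})$ to a fixed standard model, and the orbits $\Phi(\cO_{i})$ are carried to the discrete orbit of a fixed point free action on each standard line.

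Once the configuration is normalized, $\Phi h' \Phi^{-1}$ acts on each standard line as a fixed point free orientation preserving homeomorphism of the line that permutes a prescribed orbit; using Alexander's trick (Lemma~\ref{lemma.Alexander}) together with the fact that every fixed point free homeomorphism of $\bbR$ is conjugate to a translation, one can further isotope $\Phi h' \Phi^{-1}$, through homeomorphisms preserving the standard configuration of lines, to a homeomorphism which acts as a unit translation to the right or to the left on each standard line. The mapping class is thereby encoded by the discrete data consisting of the configuration together with a sign $\varepsilon_{i}\in\{+,-\}$ attached to each $\Gamma_{i}$.

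Finally I enumerate this data and quotient by the allowed symmetries, namely permutations of the orbits, reversal of the ambient orientation, and the homeomorphisms of the plane that globally preserve the standard model (the half-turn exchanging the two parallel lines in the $r=2$ case, and the cyclic symmetries exchanging the three lines of the parallel or tripod model in the $r=3$ case). For $r=2$ the four sign patterns collapse to the two classes $[T]$ (equal signs) and $[R]$ (opposite signs), matching the first bullet. For $r=3$ a careful bookkeeping within the two configurations yields exactly the five classes shown in Figure~\ref{figure.explicit}; Proposition~\ref{prop.centralisateur2}, applied to each pair of orbits, together with the Poincar\'e index defined in Theorem~\ref{theo.1}, guarantees that no two of these five classes are conjugate. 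The main obstacle will precisely be this last bookkeeping step: listing all sign patterns for each of the two arrangements, identifying pairs that become equivalent under renumbering and orientation reversal, and verifying that the resulting list has exactly five elements corresponding to the figure.
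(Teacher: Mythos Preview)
Your approach is correct and is precisely what the paper intends: note that the paper itself writes ``The proof is left to the reader'' for this corollary, so there is no proof in the paper to compare against, and your outline (Theorem~\ref{theo.3} $\Rightarrow$ Lemma~\ref{lem.flow-streamlines} $\Rightarrow$ Corollary~\ref{coro.straightening} $\Rightarrow$ Schoenflies--Homma normalisation $\Rightarrow$ enumeration of sign patterns modulo symmetries) is exactly the expected route. Two small remarks: first, the statement only asserts that every class is conjugate to \emph{one} of the five, so the distinctness argument you sketch at the end is a bonus rather than a requirement (and your invocation of the index there is slightly off, since the index is not an isotopy invariant---the pairwise types $[T]$, $[R]$, $[R^{-1}]$ from Proposition~\ref{prop.centralisateur2} are what actually separate the five); second, in the parallel three-line model the symmetry group is not cyclic but rather the Klein four-group generated by the two reflections, which you will notice when carrying out the bookkeeping.
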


The proof is left to the reader.

\begin{figure}[h]
\begin{center}
\begin{tikzpicture}[scale=0.2]
\tikzstyle{fleche}=[>=latex,->]
\tikzstyle{fleche inverse}=[>=latex,<-]

\begin{scope}[xshift=-30cm]\footnotesize
\draw [fleche] (-5,-4) -- (5,-4) \foreach \p in {0.2,0.5,0.8} {node [pos=\p] {$\bullet$}  }  ;
\draw [fleche] (-5,0) -- (5,0) \foreach \p in {0.2,0.5,0.8} {node [pos=\p] {$\times$}  } ;
\draw [fleche] (-5,4) -- (5,4) \foreach \p in {0.2,0.5,0.8} {node [pos=\p] {$\circ$}  } ;
\end{scope}

\begin{scope}[xshift=-15cm]\footnotesize
\draw [fleche] (-5,-4) -- (5,-4) \foreach \p in {0.2,0.5,0.8} {node [pos=\p] {$\bullet$}  }  ;
\draw [fleche] (-5,0) -- (5,0) \foreach \p in {0.2,0.5,0.8} {node [pos=\p] {$\times$}  }  ;
\draw [fleche] (5,4) -- (-5,4) \foreach \p in {0.2,0.5,0.8} {node [pos=\p] {$\circ$}  }  ;
\end{scope}

\begin{scope}\footnotesize
\draw [fleche] (-5,-4) -- (5,-4) \foreach \p in {0.2,0.5,0.8} {node [pos=\p] {$\bullet$}  }  ;
\draw [fleche] (5,0) -- (-5,0) \foreach \p in {0.2,0.5,0.8} {node [pos=\p] {$\times$}  } ;
\draw [fleche] (-5,4) -- (5,4) \foreach \p in {0.2,0.5,0.8} {node [pos=\p] {$\circ$}  } ;
\end{scope}

\begin{scope}[xshift=15cm]\footnotesize
\draw [fleche] (-4,-5) .. controls +(45:1) and  +(left:1.7) .. (-1,-3) node [midway] {$\bullet$} -- (0,-3) node {$\bullet$} -- (1,-3)  .. controls +(right:1.7) and +(135:1) .. (4, -5) node [midway] {$\bullet$} ;
\draw [rotate=120,fleche]  (-4,-5)  .. controls +(45:1) and  +(left:1.7) .. (-1,-3) node [midway] {$\times$} -- (0,-3) node {$\times$} -- (1,-3)  .. controls +(right:1.7) and +(135:1) ..(4, -5) node [midway] {$\times$} ;  
\draw [rotate=240,fleche]  (-4,-5) .. controls +(45:1) and  +(left:1.7) .. (-1,-3) node [midway] {$\circ$} -- (0,-3) node {$\circ$} -- (1,-3) .. controls +(right:1.7) and +(135:1) ..(4, -5) node [midway] {$\circ$} ;
\end{scope}


\begin{scope}[xshift=30cm]\footnotesize
\draw [fleche inverse] (-4,-5) .. controls +(45:1) and  +(left:1.7) .. (-1,-3) node [midway] {$\bullet$} -- (0,-3) node {$\bullet$} -- (1,-3)  .. controls +(right:1.7) and +(135:1) .. (4, -5) node [midway] {$\bullet$} ;
\draw [rotate=120,fleche]  (-4,-5)  .. controls +(45:1) and  +(left:1.7) .. (-1,-3) node [midway] {$\times$} -- (0,-3) node {$\times$} -- (1,-3)  .. controls +(right:1.7) and +(135:1) ..(4, -5) node [midway] {$\times$} ;  
\draw [rotate=240,fleche]  (-4,-5) .. controls +(45:1) and  +(left:1.7) .. (-1,-3) node [midway] {$\circ$} -- (0,-3) node {$\circ$} -- (1,-3) .. controls +(right:1.7) and +(135:1) ..(4, -5) node [midway] {$\circ$} ;
\end{scope}

\end{tikzpicture}
\end{center}
\caption{The five Brouwer mapping classes relative to three orbits}
\label{figure.explicit}
\end{figure}
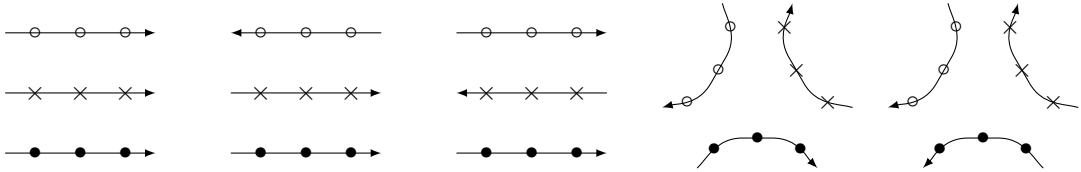

\appendix

\section{Existence of homotopy streamlines and reducing lines}

\begin{proof}[Proof of Proposition~\ref{prop.half-proper-streamlines}]
Choose one point $x_{i}$ in each orbit $\cO_{i}$.
According to Proposition~6.6 in~\cite{handel1999fixed}, each $x_{i}$ is included in a backward proper  homotopy streamline.
Furthermore, if two (generalized or not) backward proper  homotopy streamlines $A^{-}$, ${A'}^{-}$ are not disjoint then Lemma~4.6 in the same paper shows that there is a (generalized) backward proper  homotopy streamline that contains all the $x_{i}$'s that are contained in  $A^{-}$ or ${A'}^{-}$. We may apply this Lemma repeatedly until we get a family $A_{1}^{-}, \dots , A_{r'}^{-}$ of pairwise disjoint (generalized) backward proper  homotopy streamlines whose union contains all the $x_{i}$'s.
Likewise we find a family $A_{1}^{+}, \dots , A_{r''}^{+}$ of pairwise disjoint (generalized) forward proper  homotopy streamlines with the same property. Denote by $\alpha_{i}^{-}, \alpha_{j}^{+}$ the (generalized) homotopy translation arcs that generate $A_{i}^{-}, A_{j}^{+}$.
For every $i,j$ and every positive $k,k'$, if $h^{-k}\alpha_{i}^-$ is not homotopically disjoint from $h^{k'}\alpha_{j}^+$, then $h^{k'+k}(\alpha_{j}^+)$ is not homotopically disjoint from $\alpha_{i}^-$. By properness this cannot happen for arbitrarily large values of $k'$. In other words, the union of the forward streamlines intersects the union of the backward streamlines in  a bounded subset of the plane. Thus, up to replacing each streamline of the second family by some sufficiently large positive iterate, we may assume that the two families are disjoint. Now consider the family $\{ A_{1}^{-}, \dots , A_{r'}^{-}, A_{1}^{+}, \dots , A_{r''}^{+} \}$ whose elements are pairwise disjoint.
Assume that there are two forward proper streamlines, say $A_{1}^+$ and $A_{2}^+$, that are  adjacent for the cyclic order at infinity in this family.
 Using Lemma 3.5 of~\cite{handel1999fixed} we get a homeomorphism $h'$, isotopic to $h$ relative to $\cO = \cO_{1} \cup \cdots \cup \cO_{r}$, for which $A_{1}^+, A_{2}^+$ are positively invariant, \emph{i.e.} $h'(A_{1}^{+}) \subset A_{1}^{+}$ and $h'(A_{2}^{+}) \subset A_{2}^{+}$. 
 Then the adjacency implies the existence of an arc $\gamma$ joining  the end-points of $A_{1}^+$ and $A_{2}^+$, otherwise disjoint from $A_{1}^+, A_{2}^+$,  and such that the ``half-strip'' $U$ bounded by $\gamma \cup A_{1}^+ \cup A_{2}^+$ is disjoint from the $\cO_{i}$'s  (see Figure~\ref{figure.generalized-hta}). Then $h(\gamma)$ is isotopic, relative to $\cO$, to an arc included in $U$. As a consequence, $\gamma$ is forward proper. It is now easy to construct a (generalized) homotopy translation arc $\alpha$ that generates a forward proper homotopy streamline $A^{+}$  containing $\cO \cap ( A_{1}^+ \cup A_{2}^+) $ and that meets no other streamline of the family (see the figure). Then we consider the new family obtained from the first one  by removing $A_{1}^+, A_{2}^+$ and adding $A^{+}$. We repeat this process until we get a family with no adjacent forward proper streamlines. A symmetric process applies to get a family with no adjacent backward proper streamlines either. This completes the proof of the Proposition. 
\end{proof}

\begin{figure}[h]
\begin{center}
\begin{tikzpicture}[xscale=0.6,yscale=0.2]
\tikzstyle{fleche}=[>=latex,->]
\tikzstyle{fleche passee}=[>=latex,>-]

\footnotesize
\path  (3,3) -- (14,7) 	node[pos=0,name=B0] {} 
					node[pos=0.125,inner sep=0mm,minimum size=1mm,name=B1] {$\times$}  
					node[pos=0.25,inner sep=0mm,minimum size=1mm,name=B2] {$\circ$} 
					node[pos=0.375,inner sep=0mm,minimum size=1mm,name=B3] {$\times$} 
					node[pos=0.5,inner sep=0mm,minimum size=1mm,name=B4] {$\circ$} 
					node[pos=0.625,inner sep=0mm,minimum size=1mm,name=B5] {$\times$} 
					node[pos=0.75,inner sep=0mm,minimum size=1mm,name=B6] {$\circ$} 
					node[pos=0.875,inner sep=0mm,minimum size=1mm,name=B7] {$\times$} 
					node[pos=1,name=B8] {} node [pos=1,above right] {$A_{2}^{+}$} ;
\draw [fleche passee] (B1) -- (B2) ;
\draw (B2) -- (B3) -- (B4) -- (B5) -- (B6) -- (B7) ;
\draw [dotted] (B8) -- (B7) ;										

\path  (14,-7) -- (3,-3) node[pos=0,name=A0] {} node[pos=0,below right] {$A_{1}^{+}$} 
					node[pos=0.125,inner sep=0mm,minimum size=1mm,name=A1] {$\bullet$}  
					node[pos=0.25,inner sep=0mm,minimum size=1mm,name=A2] {$+$} 
					node[pos=0.375,inner sep=0mm,minimum size=1mm,name=A3] {$\bullet$} 
					node[pos=0.5,inner sep=0mm,minimum size=1mm,name=A4] {$+$} 
					node[pos=0.625,inner sep=0mm,minimum size=1mm,name=A5] {$\bullet$} 
					node[pos=0.75,inner sep=0mm,minimum size=1mm,name=A6] {$+$} 
					node[pos=0.875,inner sep=0mm,minimum size=1mm,name=A7] {$\bullet$} 
					node[pos=1,name=A8] {}  ;
%
					
\draw [dotted] (A0) -- (A1) ;
\draw (A1) -- (A2) -- (A3)  -- (A4) -- (A5) -- (A6) ;
\draw [fleche passee] (A7)--(A6) ;

\foreach \p in {\bullet,\times,\circ,\bullet,\times,\circ,+,+}
	{\draw (1+rand*2,rand*6) node {$\p$} ;}

\small
\draw (A7) .. controls +(right:3) and +(right:3) .. (B1) node[near start,above left] {$\gamma$} ; 
\draw (A5) .. controls +(75:3) and +(-135:3) .. (B3) node[near start,right] {$h'(\gamma)$} ;

\draw [very thick, dashed]  (A3) -- (A2) -- (B5) -- (B6) -- (A1) node [midway,right] {$h^{2}(\alpha)$} ;

\end{tikzpicture}
\end{center}

\caption{Construction of a generalized homotopy translation arc}
\label{figure.generalized-hta}
\end{figure}
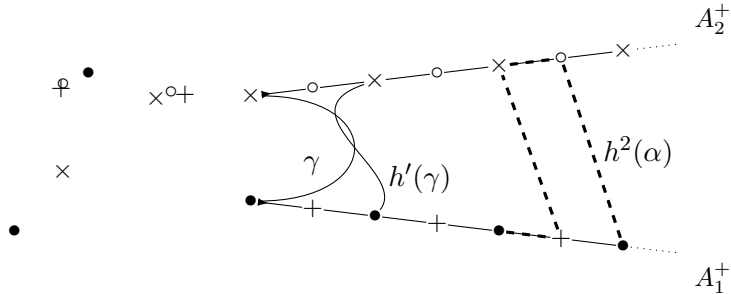

\begin{proof}[Proof of Proposition~\ref{prop.reducing-line}]
All the needed arguments are included in~\cite{handel1999fixed} and~\cite{franks2010entropy}, more precisely in the proofs of Lemma~6.4 of the former and Lemma~8.13 of the latter.
 The main tools are the fitted families of section 5 of~\cite{handel1999fixed}.
 By hypothesis there is a family of disjoint backward and forward (generalized) homotopy streamlines  $A_{1}^-, A_{1}^+, \dots ,A_{r'}^-, A_{r'}^+ $. The corresponding (generalized)  homotopy translation arcs  are denoted by $\alpha_{1}^-, \alpha_{1}^+, \dots ,\alpha_{r'}^-, \alpha_{r'}^+ $.
 For each $i$, there exists a unique geodesic topological line $L_{i}^-$ which is the boundary of a neighbourhood of $A_{i}^-$ whose intersection with $\cO$ equals $A_{i}^- \cap \cO$ (see Figure~\ref{figure.t} below). We define $L_{i}^+$ symmetrically. The closed subset of the plane whose boundary is the union of the $L_{i}^-$'s and the $L_{i}^+$'s is called a \emph{Brouwer subsurface} and denoted by $W$ (see~\cite{handel1999fixed}, section 5). 

The translation case will be treated at the end of the proof, so for the time being let us assume that $r'$ is bigger than $1$. 
 We first consider the  case when one of the backward proper homotopy translation arcs, say  $\alpha_{1}^-$, is also forward proper. In this case, $\alpha_{1}^{-}$ generates a proper homotopy streamline $A$. Assume that the notation has been chosen so that the end-points of $\alpha_{1}^-$ belong to the orbit $\cO_{1}$, and so that this orbit also meets $A_{1}^+$.
For every large enough positive  $n$, the arc $(h^n\alpha_{1}^-)^\sharp$ has its end-points on $A_{1}^{+}$ and is disjoint from all the $A_{i}^-$'s. Since the $A_{i}^+$'s and $A_{i}^-$ alternate in the cyclic order at infinity, by properness, there exists $n_{0}$ such that for every  $n \geq n_{0}$ this arc is also disjoint from  all the $A_{i}^+$'s except $A_{1}^+$. Denote by $V_{1}$  the connected component of the complement of $W$ that contains $A_{1}^{+}$ ; since $V_{1}$ has geodesic boundary, the minimality property of geodesics implies that for every  $n \geq n_{0}$, $(h^n\alpha_{1}^-)^\sharp$ is included in $V_{1}$. In particular, by iterating backward we find that for every $n \leq n_{0}$, $(h^n\alpha_{1}^-)^\sharp$ is  also disjoint from  all the $A_{i}^+$'s except $A_{1}^+$, and we see that $A$ is disjoint from all the $A_{i}^-$'s and all the $A_{i}^+$'s except $A_{1}^+$.

Let $U$ be a connected component of the complement of $A$ in the plane that contains some of the $\cO_{i}$'s, and let $\Delta$ be a reducing line provided by Lemma~\ref{lemm.reducing-line} that separates $U \cap \cO$ from $A \cap \cO$. The reducing line
$\Delta$ may be chosen to be disjoint from all the $A_{i}^-$'s and all the $A_{i}^+$'s except maybe $A_{1}^+$. If $A_{1}^+$ meets more than one of the $\cO_{i}$'s then there nothing left to prove. Let us assume that $\cO_{1}$ is the only one that meets $A_{1}^+$. Note that in this case $\cO_{1}$ is also the only orbit in $\cO$ that meets $\alpha_{1}^{-}$; in other words, $\alpha_{1}^{-}$ is an \emph{ungeneralized} homotopy translation arc.
Let $n>0$ such that $(h^n\alpha_{1}^-)^\sharp$ is included in $V_{1}$. By uniqueness of the homotopy translation arcs relative to one orbit (Lemma~\ref{lemm.uniqueness}), $h^n\alpha_{1}^-$ is isotopic relative to $\cO_{1}$ to some iterate of $\alpha_{1}^+$. Since both arcs belongs to  $V_{1}$ the homotopy may be performed in $V_{1}$, and thus it is also a homotopy relative to the union of the $\cO_{i}$'s. Uniqueness of geodesics in their homotopy class now implies that $A$ contains $A_{1}^+$. Thus, in the case when $A_{1}^+$ meet only one of the $\cO_{i}$'s, the reducing line $\Delta$ is also disjoint from $A_{1}^+$. This completes the proof in the case when one of the backward homotopy translation arc is forward proper.

 Now we have to face the opposite case, when none of the backward proper homotopy translation arc is forward proper. Then the forward iterates of one of the arcs, say $\alpha_{1}^{-}$, give raise to a ``fitted family'' which does not ``disappear under iteration'' (see~\cite{handel1999fixed}, section 5 and the proof of Lemma 6.4, especially the second paragraph).
In order to reformulate Theorem~5.5 of~\cite{handel1999fixed} in our context, we first apply the ``straightening principle'' (Lemma~\ref{lemm.straightening2}, arguing as in the proof of Corollary~\ref{coro.straightening}) to get a homeomorphism $h'$ which is isotopic to $h$ relative to $\cO$ and such that, for every $i=1, \dots, r'$, for every positive $n$, ${h'}^{n}(L_{i}^{+})$ is a geodesic; in particular, these topological lines are pairwise disjoint.  The following is an adapted version of Theorem~5.5 of~\cite{handel1999fixed}.

 \begin{figure}[h!]
\begin{center}
\begin{tikzpicture}[scale=0.3]
\tikzstyle{fleche}=[>=latex,->]
\tikzstyle{fleche passee}=[>=latex,>-]
\footnotesize

\colorlet{lightgray}{black!5}
\fill [thick,even odd rule,color=lightgray] (-13,-10) rectangle (13,10) 
(11,-10) .. controls +(130:0) and +(-135:2) .. (3,-3) .. controls +(45:2) and + (130:0)  .. (13,-7) -- (13,-10) -- cycle
(-11,10) .. controls +(310:0) and +(45:2) ..  (-3,3) .. controls +(-135:2) and + (310:0)  .. (-13,7) -- (-13,10) -- cycle
 [yscale=-1] (11,-10) .. controls +(130:0) and +(-135:2) .. (3,-3) .. controls +(45:2) and + (130:0)  .. (13,-7) -- (13,-10) -- cycle 
   (-11,10) .. controls +(310:0) and +(45:2) .. (-3,3) .. controls +(-135:2) and + (310:0)  .. (-13,7) -- (-13,10) -- cycle ;

\begin{scope}[color=blue]
\path   (-3,3) -- (-14,10) 	\foreach \q in {0.1875,0.4375,0.6875,0.9375} 
					{node[pos=\q,name=A\q] {} }  
					\foreach \q in {0.0625,0.3125,0.5625,0.8125} 
					{node[pos=\q,name=B\q] {} }  ;
					
%
%
\end{scope}

\path  (14,10) -- (3,3) 	node[pos=0,name=A0] {} 
					node[inner sep=0mm,minimum size=1.5mm,pos=0.125,name=A1] {$\times$} 
					node[inner sep=0mm,minimum size=1.5mm,pos=0.375,name=A2] {$\times$} 
					node[inner sep=0mm,minimum size=1.5mm,pos=0.625,name=A3] {$\times$} 
					node[inner sep=0mm,minimum size=1.5mm,pos=0.875,name=A4]  {$\times$} ;
\draw [dotted] (A0) -- (A1) ;
\draw [fleche] (A1) -- (A2) -- (A3) -- (A4) ;

\path  (-3,3) -- (-14,10) 	node[pos=0.125,inner sep=0mm,minimum size=1mm,name=B1] {$\times$}  
					node[pos=0.25,inner sep=0mm,minimum size=1mm,name=B2] {$\circ$} 
					node[pos=0.375,inner sep=0mm,minimum size=1mm,name=B3] {$\times$} 
					node[pos=0.5,inner sep=0mm,minimum size=1mm,name=B4] {$\circ$} 
					node[pos=0.625,inner sep=0mm,minimum size=1mm,name=B5] {$\times$} 
					node[pos=0.75,inner sep=0mm,minimum size=1mm,name=B6] {$\circ$} 
					node[pos=0.875,inner sep=0mm,minimum size=1mm,name=B7] {$\times$} 
					node[pos=1,name=B8] {} ;
					node [pos=0.19,name=B] {} ;
\draw  (B1) -- (B2) ;
\draw (B2) -- (B3) -- (B4) -- (B5) -- (B6) -- (B7) ;
\draw [dotted] (B8) -- (B7) ;

\path  (-3,-3) -- (-14,-10) 	node[pos=0.125,inner sep=0mm,minimum size=1mm,name=B1] {$\bullet$}  
					node[pos=0.25,inner sep=0mm,minimum size=1mm,name=B2] {$\circ$} 
					node[pos=0.375,inner sep=0mm,minimum size=1mm,name=B3] {$\bullet$} 
					node[pos=0.5,inner sep=0mm,minimum size=1mm,name=B4] {$\circ$} 
					node[pos=0.625,inner sep=0mm,minimum size=1mm,name=B5] {$\bullet$} 
					node[pos=0.75,inner sep=0mm,minimum size=1mm,name=B6] {$\circ$} 
					node[pos=0.875,inner sep=0mm,minimum size=1mm,name=B7] {$\bullet$} 
					node[pos=1,name=B8] {} ;
\draw [fleche] (B2) -- (B1) ;
\draw (B2) -- (B3) -- (B4) -- (B5) -- (B6) -- (B7) ;
\draw [dotted] (B8) -- (B7) ;

\path  (14,-10) -- (3,-3) 	node[pos=0,name=A0] {} 
					node[inner sep=0mm,minimum size=1.5mm,pos=0.125,name=A1] {$\bullet$} 
					node[inner sep=0mm,minimum size=1.5mm,pos=0.375,name=A2] {$\bullet$} 
					node[inner sep=0mm,minimum size=1.5mm,pos=0.625,name=A3] {$\bullet$} 
					node[inner sep=0mm,minimum size=1.5mm,pos=0.875,name=A4]  {$\bullet$}
					node[pos=1,name=A5] {} ;
\draw [dotted] (A0) -- (A1) ;
\draw (A1) -- (A2) -- (A3)  ;
\draw [fleche passee] (A4)--(A3) ;

\node at (-4,1) {$\circ$} ;
\node at (1,3) {$\times$} ;
\node at (-5,8) {$\circ$} ;
\node at (-3,-2) {$\bullet$} ;

\normalsize
\draw (11,-10) .. controls +(130:0) and +(-135:2) .. (3,-3) .. controls +(45:2) and + (130:0)  .. (13,-7) node [right] {$L_{i_{1}}^{+}$} ; 
\draw (13,-10) .. controls +(130:0) and +(-135:2) .. (6,-5)   .. controls +(45:2) and + (130:0)  .. (14,-9) node [right] {$h(L_{i_{1}}^{+})$} node [pos=0.2,name=A] {} ; 

\draw (-11,10) .. controls +(310:0) and +(45:2) .. (-3,3) .. controls +(-135:2) and + (310:0)  .. (-13,7) node [left] {$L_{i_{0}}^{+}$} ; 
\draw (-13,10) .. controls +(310:0) and +(45:2) .. (-6.3,4.9) node [pos=0.8,name=C] {}  .. controls +(-135:2) and + (310:0)  .. (-14,9) node [left] {$h(L_{i_{0}}^{+})$} ;

\draw [yscale=-1] (11,-10) .. controls +(130:0) and +(-135:2) .. (3,-3) .. controls +(45:2) and + (130:0)  .. (13,-7) ;
\draw [yscale=-1]  (-11,10) .. controls +(310:0) and +(45:2) .. (-3,3) .. controls +(-135:2) and + (310:0)  .. (-13,7) ;

\draw [dashed, thick,blue] (3,-3) -- (-3,3) node [near start,below] {$t$} ;
\draw [dashed, thick,blue] (A.center) .. controls +(90:4) and +(down:4) .. ({A0.1875}.center)
			.. controls +(up:2) and +(-30:2) .. (-4,9)
			.. controls +(150:2) and +(up:1) .. (C.center) node [pos=0,above] {$h'(t)$} ;

\end{tikzpicture}
\end{center}
\caption{The Brouwer subsurface $W$ and the curve $t$}
 \label{figure.t}
\end{figure}
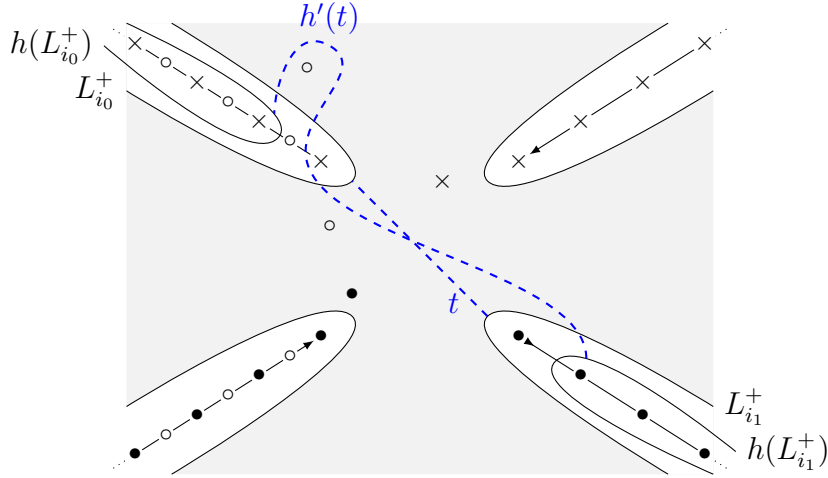

\pagebreak[3]
\begin{theo}[Handel]
\label{theo.fitted}
 Assume that one of the $\alpha_{i}^-$'s is not forward proper. Then there exists a curve $t: [0,1] \to \bbR^2 \setminus \cO$ with the following properties: 
 
 \begin{enumerate}
 \item the end-points $t(0),t(1)$ respectively belong to some $L_{i_{0}}^+$,  $L_{i_{1}}^+$ with $i_{0} \neq i_{1}$, and $t$ is included in $W$; 
 \item $h'(t)$ is homotopic, relative to $\cO$ and its end-points, to a curve $t'$ such that 
 \begin{enumerate}
 \item $t'$ is disjoint from every $L_{i}^-$ and has minimal intersection with every $L_{i}^+$ (in its homotopy class relative to end-points and $\cO$),
 \item $t$ is a connected component of $t' \cap W$,
 \item for every other connected component $s$ of $t' \cap W$, there exists $n>0$ such that ${h'}^n(s)$ is homotopic (relative to its end-points and $\cO$) to a curve $s'$ which is disjoint from $W$. 
\end{enumerate}
 \end{enumerate}
 
\end{theo}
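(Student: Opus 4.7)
The statement is essentially an adaptation of Theorem~5.5 of~\cite{handel1999fixed} to the present set-up, so my plan is to recover it from the fitted-family machinery developed in Section~5 of that paper.

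First, I would verify that the Brouwer subsurface $W$ built from the geodesic lines $L_i^\pm$ satisfies the definition of a Brouwer subsurface in~\cite{handel1999fixed}: each of its boundary components is a geodesic topological line, the complementary ends of $W$ contain the asymptotic tails of the homotopy streamlines $A_i^\pm$, and $\cO \cap W$ consists of a compact initial portion of each streamline. The straightening applied just before the statement of the theorem guarantees that the forward iterates ${h'}^n(L_i^+)$ are geodesics, hence pairwise in minimal position, which is the key technical prerequisite for applying Handel's fitted-family results.

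Second, I would recast the hypothesis ``some $\alpha_j^-$ is not forward proper'' as Handel's non-disappearance property for the fitted family generated by iterating $\alpha_j^-$ forward inside $W$. Indeed, if every $\alpha_i^-$ were forward proper, then by Corollary~\ref{coro.straightening} we could arrange each $A_i^-$ to be fully $h'$-invariant, and the forward iterates ${h'}^n(W)$ would eventually meet $W$ only along arcs contained in the $A_i^-$'s, so the fitted family would disappear in Handel's sense. Conversely, when some $\alpha_j^-$ fails to be forward proper the forward iterates of $\alpha_j^-$ must keep returning to $W$ in an essential way, and after geodesic straightening they produce a non-trivial collection of transverse strands crossing $W$. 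With this dictionary in place, Theorem~5.5 of~\cite{handel1999fixed} produces an arc $t \subset W$ whose image $h'(t)$ is homotopic, after geodesic straightening, to a curve $t'$ in which $t$ itself is the ``terminal'' component and all other components of $t' \cap W$ are transient under further iteration; this is exactly properties (1), (2a), (2b) and (2c). The fact that the two endpoints of $t$ lie on distinct $L^+$-components comes from the geometry of Handel's construction: the only way for $t$ to be stable is to realize a genuine crossing between two different forward ends of $W$ (a loop at a single $L_{i}^+$ would get pushed off after finitely many iterations, contradicting the fitted property).

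The main obstacle, and the reason the theorem is not cited verbatim, is to verify the translation between the two settings: Handel's Section~5 is written for isotopy classes rather than conjugacy classes, allows orientation-reversing conjugacies, and is phrased under the implicit assumption of non-generalized streamlines (one orbit per streamline). The enlargement to generalized streamlines is harmless since the hyperbolic machinery depends only on the locus $\cO$ and its complement, and the restriction to orientation-preserving conjugacies that globally preserve each orbit only strengthens the hypotheses; nevertheless, bookkeeping the labels $L_i^+, L_i^-$ through Handel's argument is the step requiring the most care. Once this translation is checked, the conclusion is Handel's Theorem~5.5 applied verbatim.
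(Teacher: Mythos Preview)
Your proposal is correct and follows essentially the same approach as the paper: the paper does not give an independent proof of this theorem but presents it explicitly as ``an adapted version of Theorem~5.5 of~\cite{handel1999fixed}'', after setting up the Brouwer subsurface $W$ and noting that the forward iterates of the non-forward-proper arc $\alpha_j^-$ produce a fitted family that does not disappear under iteration. Your translation between the two settings (verifying that $W$ is a Brouwer subsurface, that the straightened $h'$ makes the ${h'}^n(L_i^+)$ geodesic, and that generalized streamlines cause no trouble) is exactly the bookkeeping the paper alludes to when it refers the reader to Section~5 and the proof of Lemma~6.4 in~\cite{handel1999fixed}.
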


The curves $s$ are said to ``disappear under iteration''. Note in particular that point 2.(c) in the conclusion of the Theorem prevents $t'$ from meeting the $L_{i}^+$'s except $L_{i_{0}}^{+}$ and $L_{i_{1}}^{+}$.

 \begin{figure}[h!]
\begin{center}
\begin{tikzpicture} [scale=0.4]

\begin{scope}
\clip [draw] (0,0) circle (10) ;

\def\rayon{0.5}
\node [name=A] at (77:10) {};
\node [name=B] at (85:10) {};
\node [name=C] at (69:10) {};

\node [name=D] at (66:10) {};
\node [name=E] at (72:10) {};
\node [name=F] at (82:10) {};
\node [name=G] at (88:10) {};

\node [name=AA] at (-77:10) {};
\node [name=BB] at (-85:10) {};
\node [name=CC] at (-69:10) {};

\node [name=DD] at (-66:10) {};
\node [name=EE] at (-72:10) {};
\node [name=FF] at (-82:10) {};
\node [name=GG] at (-88:10) {};

\def\rayonn{0.4}
\node [name=a] at (125:10) {};
\node [name=b] at (119:10) {};
\node [name=c] at (131:10) {};
\node [name=aa] at (-125:10) {};
\node [name=bb] at (-119:10) {};
\node [name=cc] at (-131:10) {};

\colorlet{lightgray}{black!5}
\fill [even odd rule,color=lightgray] (10,-10) rectangle (0,10)  (20,0) circle (18) (A) circle (\rayon) (B) circle (\rayon/2) (C) circle (\rayon/2) (AA) circle (\rayon) (BB) circle (\rayon/2) (CC) circle (\rayon/2) (D) circle (\rayon/4) (E) circle (\rayon/4) (F) circle (\rayon/4) (G) circle (\rayon/4) (DD) circle (\rayon/4) (EE) circle (\rayon/4) (FF) circle (\rayon/4) (GG) circle (\rayon/4) ;
\fill [even odd rule,color=lightgray] (-20,0) circle (18) (-14,0) circle (10)  (a) circle (\rayonn) (b) circle (\rayonn/2) (c) circle (\rayonn/2) (aa) circle (\rayonn) (bb) circle (\rayonn/2) (cc) circle (\rayonn/2) ;


\draw (A) circle (\rayon) ;
\draw (B) circle (\rayon/2) ;
\draw (C) circle (\rayon/2) ;
\draw (D) circle (\rayon/4) ;
\draw (E) circle (\rayon/4) ;
\draw (F) circle (\rayon/4) ;
\draw (G) circle (\rayon/4) ;

\draw (AA) circle (\rayon) ;
\draw (BB) circle (\rayon/2) ;
\draw (CC) circle (\rayon/2) ;
\draw (DD) circle (\rayon/4) ;
\draw (EE) circle (\rayon/4) ;
\draw (FF) circle (\rayon/4) ;
\draw (GG) circle (\rayon/4) ;

\draw (a) circle (\rayonn) ;
\draw (b) circle (\rayonn/2) ;
\draw (c) circle (\rayonn/2) ;

\draw (aa) circle (\rayonn) ;
\draw (bb) circle (\rayonn/2) ;
\draw (cc) circle (\rayonn/2) ;

\draw (0,-10) -- (0,10) ;
\draw  (20,0) circle (18) ;
\draw (14,0) circle (10) ;
\draw  (-20,0) circle (18) ;
\draw (-14,0) circle (10) ;
\draw (-11,0) circle (5) ;

\draw [dashed, thick] (0,0)  -- (2,0) node [near start,above] {$\tilde t$} ;
\draw [dashed, thick] (-6,0) .. controls +(2,-1) and +(-2,-1) .. (1,0)  node [very near start,below] {\small $\tilde{h'}(\tilde t)$}  .. controls +(1,0.5) and +(-0.25,0.5) ..  (4,0) ;
\end{scope}

\coordinate  [xshift=-8cm] (SA) at (14:18)  {} ;
\coordinate  [xshift=-5.6cm] (SB) at (20:10)  {} ;
\draw [dashed, thick] (SA) -- (SB) node [midway,above] {$\tilde s$} ;

\draw [very thick,color=blue] (0,10)  arc (90:270:10) ;
\draw [very thick,color=blue] (10,0)  arc (0:64:10) ;
\draw [very thick,color=blue] (10,0)  arc (0:-64:10) ;

\node at (-10,0) [left] {\textcolor{blue}{$I_{0}$}};
\node at (10,0) [right] {\textcolor{blue}{$I_{1}$}};
\node at (0,-10) [below] {$\tilde L_{i_{0}}^{+}$} ;
\node at (5,-9.8) {$\tilde L_{i_{1}}^{+}$} ;
\node at (8.4,-8) {$\tilde{h'} (\tilde L_{i_{1}}^{+})$} ;
\node at (-11,-5) {$\tilde{h'} (\tilde L_{i_{0}}^{+})$} ;

\end{tikzpicture}
\end{center}
 \caption{The same situation in the universal cover ($\tilde W$ is in grey)}
 \label{figure.universal}
 \end{figure}
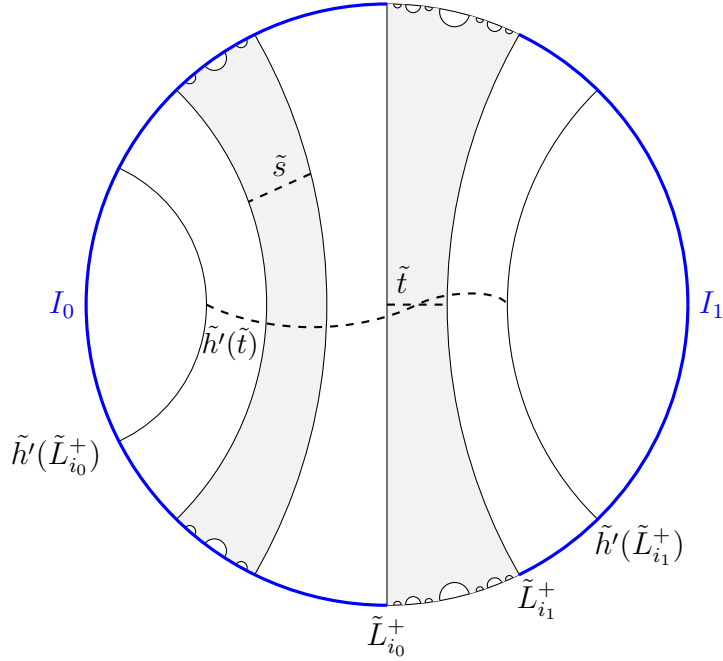

Here is the translation in terms of the universal cover $\pi : \bbH^{2} \to \bbR^{2} \setminus \cO$. 
 Assume again that some $\alpha_{i}^-$ is not forward proper, and let $i_{0}, i_{1}$ be as in the previous Theorem.
Then there exists a lift $\tilde h': \bbH^{2} \to \bbH^{2}$ of $h'$,   and lifts $\tilde L_{i_{0}}^{+}, \tilde L_{i_{1}}^{+}$ of $L_{i_{0}}^+$,  $L_{i_{1}}^+$ with the following properties (see figure~\ref{figure.universal}):

\begin{itemize}
\item $\tilde h' \tilde L_{i_{0}}^{+}$ is separated by  $\tilde L_{i_{0}}^{+}$ from $\tilde L_{i_{1}}^{+}$, and $\tilde L_{i_{0}}^{+}$ is separated by  $\tilde L_{i_{1}}^{+}$ from $\tilde h' \tilde L_{i_{1}}^{+}$ (you don' really need to read that sentence, just look at the picture), 
\item For every connected component $\mathbf{s}$ of $\pi^{-1} W$ that separates $\tilde h \tilde L_{i_{0}}^{+}$ from $\tilde h \tilde L_{i_{1}}^{+}$, let $\tilde L, \tilde L'$ be the two boundary components of $\mathbf{s}$ that separate $\tilde L_{i_{0}}^{+}$ from $\tilde L_{i_{1}}^{+}$ (both are lifts of either $L_{i_{0}}^{+}$ or 
$L_{i_{1}}^{+}$). Then there exists some $n>0$ such that $\pi^{-1} W$ does not separate ${\tilde {h'}}^{n} \tilde L$ from ${\tilde {h'}}^{n} \tilde L'$.
\end{itemize}
The lift $\tilde h'$ is obtained as follows: first choose a lift $\tilde t$ of $t$, then denote by $\tilde t'$ the unique lift of $t'$ which contains $\tilde t$, consider the lift $\tilde t''$ of $h(t)$ which has the same end-points as $\tilde t'$, and define $\tilde h'$ as the unique lift of $h'$ for which $\tilde h'(\tilde t) = \tilde t''$.

Now we follow the construction of a reducing line  in the proof of Lemma 6.4 in~\cite{handel1999fixed}. We work in the universal cover, and consider its circle boundary $\partial \bbH^{2}$. The map $\tilde h'$ extends to a homeomorphism $\tilde h' : \bbH^2 \cup \partial \bbH^{2} \to \bbH^2 \cup \partial \bbH^{2}$ (see~\cite{handel1999fixed}, Proposition 3.1).
Since $A_{i_{0}}^{+}$ and $A_{i_{1}}^{+}$ are separated by the backward homotopy streamlines in the cyclic order at infinity,  the geodesics 
$\tilde L_{i_{0}}^{+}$ and $\tilde L_{i_{1}}^{+}$ have distinct end-points in  $\partial \bbH^{2}$.
Let $I_{0}, I_{1}$ be the disjoint closed intervals in $\partial \bbH^{2}$ whose end-points are respectively the end-points of $\tilde L_{i_{0}}^{+}$ and $\tilde L_{i_{1}}^{+}$ (see the picture). The properties of $\tilde h'$ entail that $\tilde h' (I_{0}) \subset I_{0}$ and thus $\tilde h'$
 has at least one  fixed point in the interval $I_{0}$. Furthermore this fixed point is actually unique. Indeed, if the opposite would  hold then the sequence of geodesics $\tilde {h'}^{n}(\tilde L_{i_{0}}^{+})$ for $n \geq 0$ would accumulate in $\bbH^{2}$, and thus the sequence ${h'}^{n}(L_{i_{0}}^{+})$ would accumulate in the plane. But since the forward homotopy streamline $A_{i_{0}}^{+}$ is a proper, the sequence ${h'}^{n}(L_{i_{0}}^{+})$ is homotopically proper, and this would contradicts point 3 of Lemma~\ref{lemm.straightening} (or more precisely the version for topological lines stated in Lemma~\ref{lemm.straightening2}).

 Likewise $\tilde h'$ has a unique fixed point in $I_{1}$.
  Let $\tilde \lambda$ be a geodesic joining those two fixed points, and $\lambda$ be its projection in $\bbR^{2}\setminus \cO$.
Using property (2.c) of Theorem~\ref{theo.fitted}, one can prove that $\lambda$ is properly embedded in the plane (see~\cite{handel1999fixed}, last paragraph of the proof of Lemma~6.4), and since the end-points of $\tilde \lambda$ are fixed by $\tilde h'$, $\lambda$ is a reducing line for $h'$, and thus also for $h$. 

It remains to check that $\lambda$ is well positioned relative to our family of streamlines. By construction no component of some $\pi^{-1}( L_{i}^{-})$ separates the end-points of $\tilde \lambda$, thus $\lambda$ is disjoint from all the $A_{i}^{-}$'s. Since the $s$'s disappear under iteration, $\lambda$ does not meet the $L_{i}^+$'s except maybe $L_{i_{0}}^{+}$ and $L_{i_{1}}^{+}$. Thus $\lambda$ is also disjoint from all the $A_{i}^{+}$'s except maybe $A_{i_{0}}^{+}$ and $A_{i_{1}}^{+}$.

Now assume that the forward proper homotopy streamline $A_{i_{0}}^{+}$ meets a single orbit $\cO_{j}$, and let us prove that $\lambda$ is disjoint from $A_{i_{0}}^{+}$. 
 (Since  $A_{i_{0}}^{+}$ and $A_{i_{1}}^{+}$ plays symmetric roles, this will complete the proof of point 3 in Proposition~\ref{prop.reducing-line}.)
We follow the proof of Lemma 8.13 in~\cite{franks2003periodic}. 


\begin{lemm}
In this situation, no connected component of $t' \cap W$ has both end-points on $L_{i_{0}}^{+}$; in other words, all the $s$'s in Theorem~\ref{theo.fitted} have both end-points on $L_{i_{1}}^{+}$.
\end{lemm}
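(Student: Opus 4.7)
Suppose for contradiction that some component $s$ of $t'\cap W$ has both endpoints $p,q$ on $L_{i_0}^+$. Let $\sigma$ denote the sub-arc of $L_{i_0}^+$ from $p$ to $q$ and $\gamma:=s\cdot\sigma^{-1}$ the resulting simple loop in $\bbR^2\setminus\cO$. The plan is a three-step winding-number argument, with a universal-cover reduction at the end.

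First I would show that $\gamma$ is essential in $\bbR^2\setminus\cO$, using Theorem~\ref{theo.fitted}(2a): a null-homotopy of $\gamma$ rel $\cO$ would allow us to replace $s$ by a transverse push-off of $\sigma$, reducing the number of intersections of $t'$ with $L_{i_0}^+$ by two, a contradiction. So $\gamma$ bounds a disc $D$ with $E:=D\cap\cO$ non-empty. Since $s\subset\overline W$ and $\sigma\subset L_{i_0}^+$, the disc $D$ lies on the non-$A_{i_0}^+$ side of $L_{i_0}^+$, whence $D$ contains no forward iterate $h^m(x_j)$, $m\geq 0$ (where $x_j=\alpha_{i_0}^+(0)$). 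Since $t'$ is disjoint from every $L_k^-$ and the interior of $s$ lies in $\operatorname{int}(W)$, the loop $\gamma$ is disjoint from every $L_k^\pm$ with $k\neq i_0$, so $D$ avoids every $U_k$ and $U_k^-$ with $k\neq i_0$. Hence $E$ is a non-empty finite subset of the exceptional orbit points in $W$.

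Next I would exploit the disappearance property, Theorem~\ref{theo.fitted}(2c). For $n$ large enough that $h'^n(s)\simeq s'$ rel endpoints and $\cO$ with $s'$ disjoint from $W$, connectedness of $s'$ together with $h'^n(L_{i_0}^+)\subset U_{i_0}$ force $s'\subset U_{i_0}$; the property persists for all larger $n$ because $h'(U_{i_0})\subset U_{i_0}$ (the images $h'^k(L_{i_0}^+)$ form a pairwise disjoint family of geodesics nesting inside $U_{i_0}$). Then $h'^n(\gamma)\simeq s'\cdot h'^n(\sigma)^{-1}$ in $\bbR^2\setminus\cO$, a loop contained in the simply connected open set $U_{i_0}$; it therefore has winding number $0$ around every orbit point outside $U_{i_0}$. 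By $h'$-equivariance of winding numbers, $w(\gamma,p)=0$ whenever $h^n(p)\notin\{h^m(x_j):m\geq 0\}$; letting $n\to\infty$ yields $E\subset\cO_j$. Combining with the previous paragraph, $E$ is a non-empty finite subset of backward iterates $\{h^{-m}(x_j):m\geq 1\}$ lying in the exceptional set of $W$.

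The third and final step, which I expect to be the main obstacle, is to rule out even this residual possibility. The key ingredient is the uniqueness of homotopy translation arcs relative to the single orbit $\cO_j$ (Lemma~\ref{lemm.uniqueness} with $r=1$), which applies precisely because $A_{i_0}^+$ meets only $\cO_j$, so that $\alpha_{i_0}^+$ is an ungeneralized homotopy translation arc. Passing to the universal cover of $\bbR^2\setminus\cO_j$, the lift $\tilde s$ must have endpoints on two distinct lifts of $L_{i_0}^+$ related by the deck transformation associated to the non-trivial class $[\gamma]$; combined with the fact that all but finitely many backward iterates of $x_j$ lie on a single backward streamline, a careful analysis via the ``straightening principle'' (Lemma~\ref{lemm.straightening2}) shows that the geodesic representative of $s$ would cross $L_{i_0}^+$ in a pattern incompatible with the minimal-intersection property of $t'$ with $L_{i_0}^+$. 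This yields the desired contradiction and completes the proof.
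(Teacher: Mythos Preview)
Your Steps 1 and 2 are sound and constitute a genuinely different opening than the paper's: via winding numbers and the disappearance property you correctly narrow the disk $D$ bounded by $s\cup\sigma$ down to $D\cap\cO\subset\cO_j\cap W$. The argument that $s'\subset V_{i_0}$ (the $A_{i_0}^+$-side of $L_{i_0}^+$) and hence that $w(\gamma,p)=0$ whenever $h^n(p)\notin V_{i_0}$ is fine once one checks that $h'^n(L_{i_0}^+)\subset V_{i_0}$, which follows from the nesting of the geodesic neighbourhoods.

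The genuine gap is Step 3. You have only shown $E\subset\cO_j\cap W$ is non-empty; you have not produced a contradiction, and the sketch you give does not produce one. Passing to the universal cover of $\bbR^2\setminus\cO_j$ and invoking uniqueness of translation arcs for the single orbit $\cO_j$ does not obviously constrain $s$: the minimal-intersection hypothesis on $t'$ versus $L_{i_0}^+$ is relative to \emph{all} of $\cO$, not to $\cO_j$, so knowing that $s\simeq\sigma$ rel $\cO\setminus\cO_j$ (which is what $E\subset\cO_j$ gives) says nothing new about intersections rel $\cO$. Your final sentence (``the geodesic representative of $s$ would cross $L_{i_0}^+$ in a pattern incompatible with minimal intersection'') is not an argument---$s$ already has minimal intersection with $L_{i_0}^+$ by hypothesis, and nothing you have derived forces extra intersections.

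The paper's proof does not go through winding numbers at all. It works in the universal cover of $\bbR^2\setminus\cO$ and exploits the single-orbit hypothesis in a sharper, more local way: the strip $B_1$ between $L_{i_0}^+$ and $h'(L_{i_0}^+)$ contains exactly \emph{one} point of $\cO$. Two separate lifts (one coming from a component $\mu_1$ between consecutive $s_i$'s, one from the disappearance of $s_1$ at time $n_1$) force the geodesic $\tau$ straightening $h'^{n_1}(t')$ to contain two sub-arcs $\tau_1,\tau_2\subset B_1$, one with endpoints on $L_{i_0}^+$ and the other with endpoints on $h'(L_{i_0}^+)$; the single orbit point in $B_1$ then forces $\tau_1\cap\tau_2\neq\emptyset$, contradicting simplicity of $\tau$. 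The crucial input you are missing is this ``one point in $B_1$'' fact---it is exactly here, not in any global statement about $\cO_j$, that the hypothesis is used.
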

\begin{proof}
We argue by contradiction. According to  Theorem~\ref{theo.fitted} we may write $t'$ as the concatenation 
$$
t' = \mu_{0} \star s_{1} \star \cdots \star s_{k} \star \mu_{k} \star t \star \mu'
$$
 where $k \geq 1$ and each $\mu_{i}$ is disjoint from $W$ except from its end-point, and the $s_{i}$'s are the component of $t' \cap W$ whose end-points are on $L_{i_{0}}^{+}$. Let $B_{1}$ be the closed strip bounded by $L_{i_{0}}^{+}$ and $h'(L_{i_{0}}^{+})$. 
 Note that $\mu_{1}$ is included in $B_{1}$, and has both end-points on $L_{i_{0}}^{+}$. 
 By point (2.c) of Theorem~\ref{theo.fitted}, there exists $n_{1}>0$ such that ${h'}^{n_{1}} (s_{1})$ is homotopic to a curve $s'$ which is disjoint from $L_{i_{0}}^{+}$, let us choose $n_{1}$ to be the smallest positive integer having this property. Note that $s'$ is included in the connected component $V$ of $\bbR^{2} \setminus L_{i_{0}}^{+}$ that contains $A_{i_{0}}^{+}$.

We will draw two pictures in $\bbH^{2}$, these pictures will turn out to be incompatible, which will provide the desired contradiction.
For the first picture, we let $\tilde t'$ be the lift of $t'$ which is homotopic to ${\tilde h}'(\tilde t)$, and $\tilde \mu_{1}$ be the lift of $\mu_{1}$ included in $\tilde t'$ (see  the left-hand side of figure~\ref{fig.contradiction}). The end-points of $\tilde \mu_{1}$ belong to two lifts of $L_{i_{0}}^{+}$, and since $t'$ has minimal intersection with $L_{i_{0}}$, each of these lifts separates the end-points of $\tilde{t'}$, and thus it separates also  ${\tilde h}' \tilde L_{i_{0}}^{+}$ from ${\tilde h}' \tilde L_{i_{1}}^{+}$ since these geodesics contain the end-points of $\tilde t'$. Since the intervals $I_{0}, I_{1}$ in $\partial \bbH^{2}$ are positively invariant under $\tilde h$, the geodesics $\tilde {h'}^{n_{1}+1} \tilde L_{i_{0}}^{+}$ and $\tilde {h'}^{n_{1}+1} \tilde L_{i_{1}}^{+}$ are separated by all the previous lines, as on the picture. Let $\tilde \tau$ be the geodesic arc joining both end-points of  $\tilde {h'}^{n_{1}} (\tilde t')$. From this first picture we draw the conclusion that $\tilde \tau$ contains an arc $\tilde \tau_{1}$ included in the lift $\tilde B_{1}$ of $B_{1}$, with end-points on the lifts of $L_{i_{0}}^{+}$ which contain the end-points of $\tilde \mu_{1}$.

\begin{figure}
\begin{center}
\begin{tikzpicture} [scale=0.28]

\begin{scope}[xshift=-13cm] 
\clip [draw] (0,0) circle (10) ;

\colorlet{lightgray}{black!5}
\fill [even odd rule,color=lightgray] (-20,0) circle (18) (-14,0) circle (10) ;

\draw (0,-10) -- (0,10) ;
\draw  (20,0) circle (18) ;
\draw (14,0) circle (10) ;
\draw  (-20,0) circle (18) ;
\draw (-14,0) circle (10) ;
\draw (-11,0) circle (5) ;
\draw (-10,0) circle (2) ;
\draw (10,0) circle (2) ;

\draw [dashed, thick] (-6,0) -- (4,0) node [above right] {$\tilde{t'}$} ;
\draw [thick] (0,0) -- (2,0) node [midway,above] {$\tilde{t}$} ;
\draw [thick] (-4,0) -- (-2,0) node [midway,above] {$\tilde{\mu_{1}}$} ;
\draw [thick] (-4,-0.5) -- (-2,-0.5) node [midway,below] {$\tilde{\tau_{1}}$} ;
\draw [dashed, thick] (-8,-0.5) -- (8,-0.5) node [very near end,below] {$\tilde{\tau}$} ;

\end{scope} 
\begin{scope}[xshift=-13cm]
\footnotesize
\node at (0,-10) [below] {$\tilde L_{i_{0}}^{+}$} ;
\node at (5,-10) {$\tilde L_{i_{1}}^{+}$} ;
\node at (8.8,-8.2) {$\tilde{h'} (\tilde L_{i_{1}}^{+})$} ;
\node at (-11,-5) {$\tilde{h'} (\tilde L_{i_{0}}^{+})$} ;
\node at (-13,-2) {$\tilde{h'}^{n_{1}+1} (\tilde L_{i_{0}}^{+})$} ;
\node at (10,-2) [below] {$\tilde{h'}^{n_{1}+1} (\tilde L_{i_{1}}^{+})$} ;
\end{scope}

\begin{scope}[xshift=13cm] 
\clip [draw] (0,0) circle (10) ;

\colorlet{lightgray}{black!5}
\fill [even odd rule,color=lightgray] (0,-10) rectangle (10,10)   (20,0) circle (18) ;

\draw (0,-10) -- (0,10) ;
\draw  (20,0) circle (18) ;
\draw (14,0) circle (10) ;
\draw  (-20,0) circle (18) ;
\draw (-10,0) circle (2) ;
\draw (10,0) circle (2) ;

\draw [thick] (0,-0.5) -- (2,-0.5) node [midway,below] {$\tilde{\tau_{2}}$} ;
\draw [dashed, thick] (-8,-0.5) -- (8,-0.5) node [very near end,below] {$\tilde{\tau}$} ;

\end{scope} 
\begin{scope}[xshift=13cm]
\footnotesize
\node at (0,-10) [below] {$\tilde M$} ;
\node at (5,-10) {$\tilde M'$} ;
\node at (-5,-10) {$\tilde N$} ;
\node at (8.8,-8.2) {$\tilde N'$} ;
\node at (-10,2) [above] {$\tilde{h'}^{n_{1}+1} (\tilde L_{i_{0}}^{+})$} ;
\node at (13.5,2) {$\tilde{h'}^{n_{1}+1} (\tilde L_{i_{1}}^{+})$} ;
\end{scope}

\end{tikzpicture}
\end{center}

\caption{Two incompatible pictures in the universal cover ($\tilde B_{1}$ is in grey)}
\label{fig.contradiction}
\end{figure}
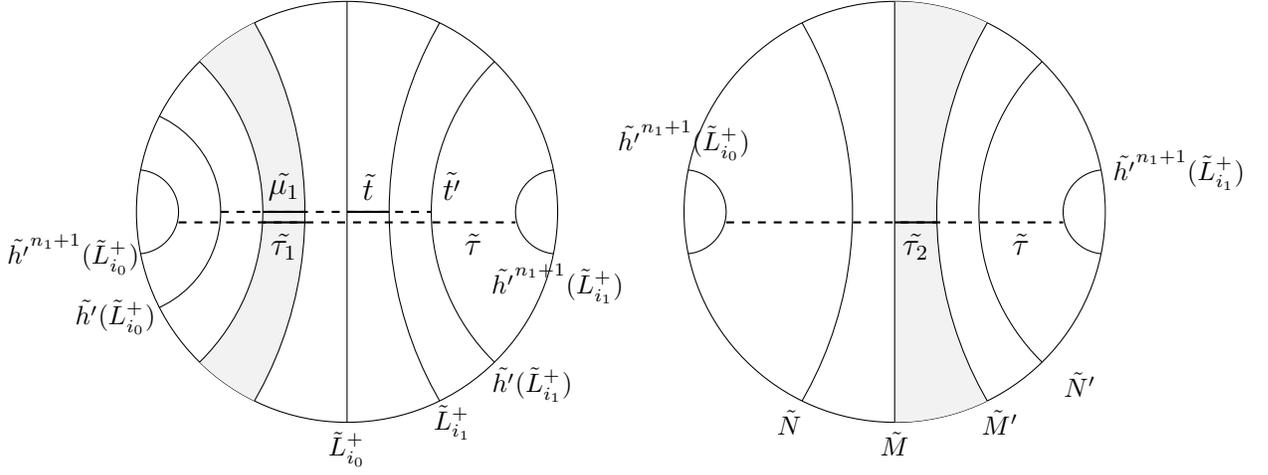

For the second picture, which we draw on the right-hand side of figure~\ref{fig.contradiction}, we let $\tilde s_{1}'$ be the lift of ${h'}^{n_{1}} (s_{1})$ which is included in $\tilde {h'}^{n_{1}} (\tilde t')$. 
By the minimality property of $n_{1}$ we get that  $\tilde B_{1}$ (or the interior of $\tilde B_{1}$ when $n_{1}=1$) separates both end-points of $\tilde s_{1}'$. 
But ${h'}^{n_{1}} (s_{1})$ is homotopic to $s'$ which is disjoint from $L_{i_{0}}^{+}$, thus no lift of $L_{i_{0}}^{+}$ separates both end-points of $\tilde s_{1}'$, and we conclude that there exists two lifts $\tilde M, \tilde {M'}$ of $h'(L_{i_{0}}^{+})$, both in the boundary of the same connected component of $\tilde B_{1}$, and both separating the end-points of  $\tilde s_{1}'$ (or, if $n_{1}=1$, such that each one contains one end-point of $\tilde s_{1}'$). These end-points belong to two lifts $\tilde N,\tilde N'$ of ${h'}^{n_{1}}  L_{i_{0}}^{+}$ which, as a consequence, are also separated by $\tilde M$ and  $\tilde {M'}$ in the case when $n_{1}>1$ (whereas in the case when $n_{1}=1$ we get $\tilde N = \tilde M$ and $\tilde N' = \tilde M'$). Now remember that ${h'}^{n_{1}} (s_{1})$ is included in ${h'}^{n_{1}} (t')$ which has minimal intersection with ${h'}^{n_{1}}  L_{i_{0}}^{+}$. Thus $\tilde N$ and $\tilde N'$ separate both end-points of $\tilde {h'}^{n_{1}} (\tilde t')$. 
From this second picture we draw the conclusion that the geodesic arc $\tilde \tau$ contains an arc $\tilde \tau_{2}$ included in $\tilde B_{1}$ and with end-points on $\tilde M$ and $\tilde {M'}$.

\begin{figure}[h!]
\begin{center}
\begin{tikzpicture} [scale=1]
\colorlet{lightgray}{black!5}
\fill [even odd rule,color=lightgray] (0,0) .. controls +(right:7) and +(right:7) .. (0,4) -- cycle 
(0,1) .. controls +(right:5) and +(right:5) .. (0,3) -- cycle ;

\draw  (0,0) .. controls +(right:7) and +(right:7) .. (0,4) node [inner sep=0,pos=0.4,name=A] {} node [inner sep=0,pos=0.6,name=B] {} ;
\draw  (0,1) .. controls +(right:5) and +(right:5) .. (0,3) node [inner sep=0,pos=0.4,name=C] {} node [inner sep=0,pos=0.6,name=D] {} ;

\node at (4.5,2) {$\times$} ;
\draw [thick,dashed] (A) .. controls +(left:1.4) and + (left:1.4) ..(B) node [right] {$\tau_{1}$} ;
\draw [thick,dashed] (C) .. controls +(right:2) and + (right:2) ..(D) node [left] {$\tau_{2}$};

\node at (1,3.5) {$B_{1}$} ;

\end{tikzpicture}
\end{center}
\caption{Contradiction: the simple curve $\tau$ in  in $B_{1}$}
\label{fig.single-orbit}
\end{figure}
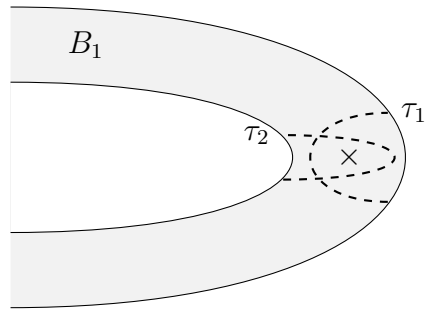

 We project the arcs $\tilde \tau, \tilde \tau_{1},\tilde \tau_{2}$ down in the plane, getting curves $\tau, \tau_{1},\tau_{2}$.
 The curve $\tau_{1}$ is included in $B_{1}$ and has its end-points on $L_{i_{0}}^{+}$ and is not homotopic to an arc included in $L_{i_{0}}^{+}$.
 The curve $\tau_{2}$ is also included in $B_{1}$ and  has its end-points on $h'(L_{i_{0}}^{+})$ and is not homotopic to an arc included in $h'(L_{i_{0}}^{+})$.
 Now we need the crucial hypothesis that  $A_{i_{0}}^{+}$ meets a single orbit $\cO_{j}$: as a consequence, the strip $B_{1}$ contains a single point $x$ of $\cO$ (see figure~\ref{fig.single-orbit}). 
Thus $\tau_{1}$ and $\tau_{2}$ must intersect. The wanted contradiction comes from the fact that both curves are included in $\tau$, combined with the observation that the geodesic curve $\tau$ is a simple curve.\footnote{Note that every curve is homotopic, relative to its end-points and $\cO$, to a simple curve; thus this observation does not follow automatically from the fact that $h'^{n_{1}}t'$ is a simple curve.}
 To prove this last observation, we first recall that  $\tau$ is homotopic, relative to its end-points and $\cO$, to the curve $h'^{n_{1}}t'$, which is  included in ${h'}^{n_{1}+1}(W)$. Since this set has geodesic boundary, it follows that $\tau$ is also included in  ${h'}^{n_{1}+1}(W)$.
If $\tau$ was not a simple curve then $\tilde \tau$ would meet another lift $\tilde \tau'$ of $\tau$; then, denoting by $\tilde W'$ the connected component of $\pi^{-1}({h'}^{n_{1}+1} W)$ that contains $\tilde \tau$ and $\tilde \tau'$,
$\tilde \tau'$ would separate the end-points of $\tilde \tau$ inside $\tilde W'$, which would contradict the fact that ${h'}^{n_{1}} (t')$ is a simple curve included in ${h'}^{n_{1}+1}(W)$.
This completes the proof of the Lemma.
\end{proof}

Owing to the Lemma we know that $t' = \mu_{0} \star t \star \mu'$ where $\mu_{0}$ is an arc crossing $B_{1}$. Since $B_{1}$ contains a single point of $\cO$, one of the two connected component of $B_{1} \setminus \mu_{0}$ contains no point of $\cO$. Using the definition of $\tilde h'$  we see that the geodesics $\tilde L_{i_{0}}^{+}$ and $\tilde{h'}(\tilde L_{i_{0}}^{+})$ have a common end-point on $\partial \bbH^{2}$; in other words one of the end-points of $I_{0}$, say $e_{1}$, is fixed by $\tilde{h'}$. We have seen above that $e_{1}$ is the only fixed point of $\tilde h'$ in $I_{0}$, and thus it is an end-point of $\tilde \lambda$.

In the case when $A_{i_{1}}^{+}$ also meets a single orbit in $\cO$, likewise the other end-point $e_{2}$ of $\tilde \lambda$ is an end-point of $I_{1}$, from which it is easy to deduce that $\tilde \lambda$ is included in $\tilde W$: then $\lambda$ is included in $W$, and in particular it is disjoint from $A_{i_{0}}^{+}$.
Now assume that $e_{2}$ is in the interior of $I_{1}$. 
Since $\tilde W$ has geodesic boundary and $e_{1}$ is an end-point of $\tilde L_{i_{0}}^{+}$, the half-geodesic in $\tilde \lambda$ from  $e_{1}$ to $\tilde L_{i_{1}}^{+}$ is included in $\tilde W$. To see that $\lambda$ is disjoint from $A_{i_{0}^{+}}$ we are left to  prove that it is disjoint from $L_{i_{0}}$. Assume by contradiction that $\tilde \lambda$ meets some lift $\tilde L$ of $L_{i_{0}}$. Then $\tilde L$ is separated from $e_{1}$ by $\tilde L_{i_{1}}^{+}$. Since the sequence of geodesics $\tilde{h'}^{n}(\tilde L_{i_{1}}^{+})$ converges to $e_{2}$ there exists $n>0$ such that $\tilde L$ separates the end-points of $\tilde {h'}^{n} (t)$. Since $\tilde L \neq \tilde L_{i_{0}}^{+}$ there are at least two connected component of $\pi^{-1}(W)$ that separate  the end-points of $\tilde {h'}^{n} (t)$. This contradicts point 2.(c) in Theorem~\ref{theo.fitted}.

\bigskip	

We finally treat the translation case. In this case $r'=1$, and thus point 1 in the conclusion of  Theorem~\ref{theo.fitted} may not hold. We conclude that the homotopy translation arc $\alpha_{1}^-$ is forward proper. It generates a proper homotopy streamline $A$ that contains all the orbits $\cO_{i}$. Using Alexander's trick and the ``straightening principle'' as in the proof of Lemma~\ref{lem.flow-streamlines}, we conclude that $[h;\cO_{1}, \dots , \cO_{r}]$ contains a homeomorphism which is conjugate to a translation. For a translation class relative to at least two orbits we may easily find a reducing line, thus the proof is also complete in this case.
\end{proof}

\begin{proof}[Proof of Lemma~\ref{lem.proper}]
If $\alpha$ was not forward proper, its iterates would give rise to a fitted family relative to the Brouwer subsurface $W$  as in Theorem~\ref{theo.fitted}. In particular, point one of the conclusion of the Theorem says that the iterates of $\alpha$ would not be homotopically disjoint from some  $L_{i_{0}}^+$,  $L_{i_{1}}^+$ with $i_{0} \neq i_{1}$.
But this may not happen since, due to the reducing lines, the iterates of $\alpha$ are homotopically disjoint from all the forward proper homotopy streamlines but $A_{1}^+$.
Details are left to the reader.
\end{proof}

\section{The Schoenflies-Homma theorem}
\label{sec.schoenflies-homma}
Remember that a \emph{topological line} is the image of the real line under a proper injective continuous map $\Phi : \bbR \to \bbR^{2}$. The Schoeflies extension theorem on the sphere can be rephrased by saying that whenever $\Phi_{1}, \Phi_{2}$ are two proper injective continuous maps from the real line to the plane, there exists an orientation preserving homeomorphism $\Phi$ of the plane such that $\Phi \Phi_{1} = \Phi_{2}$. As a consequence, the two connected components of the complement of an oriented topological line may be labeled \emph{right hand side} and \emph{left hand side} in a way which is compatible with the action of orientation preserving homeomorphisms. The following theorem is a generalisation of the Schoenflies theorem due to Homma (\cite{homma1953jordan}).

\begin{theo*}
Let $\cF,\cF'$ be two locally finite families of pairwise disjoint topological oriented lines in the plane.
Assume that for each $F \in \cF$, there exists some $F' \in \cF'$ and some orientation preserving homeomorphism $\Phi_{F}: F \to F'$, in such a way that the map $F \mapsto \Phi_{F}(F)$ is a bijection between $\cF$ and $\cF'$.
Assume that  the correspondance $F \mapsto \Phi_{F}(F)$ \emph{preserves the combinatorics}: for every $F_{1}, F_{2} \in \cF$, if $F_{2}$ is on the right-hand side of $F_{1}$, then $\Phi_{F_{2}} (F_{2})$ is on the right-hand side of $\Phi_{F_{1}} (F_{1})$.

Then there exists an orientation preserving  homeomorphism $\Phi: \bbR^2 \to \bbR^2$ such that for every $F \in \cF$, $\Phi_{\mid F} = \Phi_{F}$.
\end{theo*}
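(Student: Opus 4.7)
The plan is to reduce to the classical Schoenflies theorem via three nested steps: a single line, then a finite family by induction, then a locally finite family by an exhaustion argument. Throughout, the orientation hypothesis is what lets us keep track of the ``right-hand side/left-hand side'' labels coherently.

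The base case of a single oriented topological line $F$ with target $F'=\Phi_F(F)$ is the classical Schoenflies theorem phrased on $S^2=\bbR^2\cup\{\infty\}$. The one-point compactification turns $F$ and $F'$ into Jordan curves through $\infty$, and Schoenflies on $S^2$ furnishes a homeomorphism of $S^2$ carrying $F\cup\{\infty\}$ onto $F'\cup\{\infty\}$ and inducing $\Phi_F$ on $F$. By post-composing with a self-homeomorphism of the plane if necessary, we may arrange $\infty\mapsto\infty$; the orientations of $F$ and $F'$ together with the definition of ``right-hand side'' then force this extension to be orientation-preserving on $\bbR^2$.

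For a finite family $F_1,\dots,F_n$ I would induct on $n$. Assume we have an orientation-preserving homeomorphism $\Phi$ of $\bbR^2$ extending $\Phi_{F_1},\dots,\Phi_{F_{n-1}}$. By the combinatorial hypothesis, the oriented line $F_n$ lies in a unique connected component $U$ of $\bbR^2\setminus(F_1\cup\dots\cup F_{n-1})$, and $\Phi_{F_n}(F_n)$ lies in the component $\Phi(U)$. The end-compactification of $\overline U$ is homeomorphic to the closed disk (this is the Schoenflies-type corollary already used in the proof of Lemma~\ref{lemma.flow}), and in this disk model both $F_n$ and $\Phi_{F_n}(F_n)$ become proper arcs with endpoints on the boundary circle. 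A relative Schoenflies argument inside the disk — fixing the boundary, which is pre-matched by $\Phi$ — extends $\Phi_{F_n}$ to an orientation-preserving homeomorphism from $\overline U$ to $\overline{\Phi(U)}$ agreeing with $\Phi$ on $\partial U$. Gluing in each component completes the inductive step.

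For a general locally finite family $\cF=\{F_1,F_2,\dots\}$ I would choose an exhaustion of $\bbR^2$ by compact sets $K_1\subset K_2\subset\cdots$ and inductively produce homeomorphisms $\Phi_n$ of $\bbR^2$ extending $\Phi_{F_1},\dots,\Phi_{F_n}$, with the crucial additional property that $\Phi_{n+1}=\Phi_n$ on $K_n$. The freedom to impose this property is precisely what local finiteness buys us: only finitely many $F_i$ meet any given compact set, so for $n$ large enough the component of $\bbR^2\setminus(F_1\cup\dots\cup F_n)$ containing $F_{n+1}$ is disjoint from $K_n$, and the inductive-step extension from the previous paragraph can be performed entirely inside this component. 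The pointwise limit $\Phi=\lim_n\Phi_n$ is then eventually locally constant in $n$, hence defines an orientation-preserving homeomorphism of $\bbR^2$ with $\Phi|_{F_i}=\Phi_{F_i}$ for every $i$.

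The main obstacle I expect is the inductive step of the finite case: one must verify, using the combinatorial hypothesis on right-hand sides, that the component $U$ containing $F_n$ really does correspond under $\Phi$ to the component containing $\Phi_{F_n}(F_n)$, and that the relative Schoenflies extension inside the closed disk model can be carried out in an orientation-preserving way compatible with the orientations of all the boundary arcs. This is exactly the content that distinguishes Homma's theorem from the bare Schoenflies theorem, and it is handled by the careful bookkeeping of cyclic order on boundary components that local finiteness guarantees is a well-defined finite combinatorial structure on each compact piece.
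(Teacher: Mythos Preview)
Your approach is genuinely different from the paper's. The paper does not build the homeomorphism by induction at all: it passes to the one-point compactification $\bbR^2\cup\{\infty\}$, observes that $M=\bigcup\cF\cup\{\infty\}$ and $M'=\bigcup\cF'\cup\{\infty\}$ are compact, connected, locally connected subsets of the sphere, checks that the combinatorial hypothesis forces $\Phi$ to preserve the cyclic order of germs at $\infty$, and then invokes Homma's extension theorem~\cite{homma1953jordan} as a black box. So the paper's ``proof'' is essentially a reduction to a cited result, whereas you are trying to supply an independent elementary argument. That is a worthwhile goal, but two points in your sketch need repair.

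First, there is a circularity. In the inductive step you invoke the fact that the end-compactification of $\overline U$ is a closed disk, citing its use in Lemma~\ref{lemma.flow}. But in this paper that fact is the \emph{corollary} of the very theorem you are proving (look at the proof of the corollary in Appendix~\ref{sec.schoenflies-homma}). The fix is to fold the disk-model statement into your induction: the theorem for $n-1$ lines lets you straighten $F_1,\dots,F_{n-1}$ to an explicit model family, from which the disk structure on each complementary region is read off directly. You should say this explicitly rather than cite the corollary.

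Second, the exhaustion step does not work as stated. You assert that ``for $n$ large enough the component of $\bbR^2\setminus(F_1\cup\dots\cup F_n)$ containing $F_{n+1}$ is disjoint from $K_n$'', but local finiteness only tells you that $F_{n+1}$ itself eventually misses any fixed compact set; the complementary component containing it can be huge and may well meet $K_n$ (think of horizontal lines $\bbR\times\{k\}$ with $K_n$ the disk of radius $2n$). The standard repair is either to re-enumerate so that each $K_n$ is already saturated by the first $n$ lines \emph{and} to perform the $(n{+}1)$st modification with support in a small tubular neighbourhood of $F_{n+1}$ rather than in the entire component $U$, or to abandon the sequential limit and instead build $\Phi$ region-by-region after first proving the finite case. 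Either way, the convergence of $\Phi_n$ requires more than what you have written.
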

\begin{proof}
We work with the Alexandrov compactification $\bbR^2 \cup \{\infty\}$, which is homeomorphic to the two-sphere. Let $M$ be the union of the elements of $\cF$ with the point $\infty$. Then $M$ is locally connected, compact and connected. Define $M'$ analogously, and let $\Phi :M \to M'$ d\'efined by $\Phi(\infty) = \infty$ and $\Phi(x) = \Phi_{F}(x)$ for every $x \in F$. 
Under the hypotheses of the theorem, the map $\Phi$ preserves the cyclic order at infinity on the germs of topological lines included in $M$. Thus we may apply Homma's theorem (\cite{homma1953jordan}) which provides the desired extension of $\Phi$ to an orientation preserving homeomorphism of the plane.
\end{proof}
\begin{coro*}
Let $\cF$ be as in the previous theorem, and $U$ be a connected component of the complement of $\cup\{F, F \in \cF\}$. Then the closure of $U$ in the plane is homeomorphic to the closed unit disk minus a finite subset of the circle boundary.
\end{coro*}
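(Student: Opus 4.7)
The plan is to apply the Schoenflies--Homma theorem to an explicit model whose complementary region has the same combinatorial type as $U$, under the operating assumption that $\cF$ is finite (the case relevant to the application in Lemma~\ref{lemma.flow}; for genuinely locally finite $\cF$ the conclusion can fail, e.g.\ by stacking infinitely many rightward-opening parabolic lines).

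First, let $\cF_U\subseteq\cF$ denote the subfamily of lines $F\in\cF$ with $F\subseteq\partial U$; by finiteness of $\cF$ this is finite. I observe that $U$ is itself a connected component of $\bbR^2\setminus\bigcup\cF_U$: any line of $\cF\setminus\cF_U$ lying in the component of $\bbR^2\setminus\bigcup\cF_U$ that contains $U$ would in fact be in $\partial U$, hence in $\cF_U$, a contradiction. Orient $\cF_U=\{F_1,\dots,F_n\}$ and extract the combinatorial invariant: the cyclic order at $\infty$ in $S^2=\bbR^2\cup\{\infty\}$ of the $2n$ ``ends'' of the $F_i$'s, the non-crossing pairing (forced by pairwise disjointness of topological lines in the plane) identifying the two ends of each $F_i$, and the orientations.

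Second, I build an explicit model $\cF'=\{L_1,\dots,L_n\}$ of $n$ pairwise disjoint topological lines in $\bbR^2$ realising the same combinatorial invariant. Any non-crossing pairing on $2n$ points of a circle can be realised by disjoint chords in a closed topological disk; removing one specified boundary point of the disk and identifying the result with $\bbR^2$ yields a concrete family of disjoint topological lines whose ends at $\infty$ have the prescribed cyclic order and pairing. Give each $L_i$ the orientation matching $F_i$, and let $U_0$ be the component of $\bbR^2\setminus\bigcup\cF'$ combinatorially corresponding to $U$.

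Third, I apply the Schoenflies--Homma theorem to $\cF'$ and $\cF_U$ with the orientation-respecting bijection $L_i\mapsto F_i$. The right/left compatibility required by the theorem is precisely what the model was constructed to satisfy: matching cyclic order and pairing at $\infty$ translates into preservation of the right-hand-side relation among the lines. The theorem then produces an orientation-preserving plane homeomorphism $\Phi$ with $\Phi(L_i)=F_i$ for every $i$; hence $\Phi(U_0)=U$ and $\Phi(\overline{U_0})=\overline U$.

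Finally, an elementary inspection of the model gives $\overline{U_0}\cong\bbD^2\setminus E$ for some finite $E\subset\partial\bbD^2$: $\overline{U_0}$ has $n$ boundary line-arcs and a finite number of ``ends at infinity,'' one per cyclically adjacent pair of line-ends accessible from $U_0$; its end-compactification is a closed topological disk whose boundary circle alternates the $n$ arcs with the finitely many added end-points, and $E$ is exactly this set of added points. The main obstacle is Step~2, namely articulating the combinatorial invariant precisely and exhibiting a concrete realisation of it by disjoint topological lines in the plane; once that is in hand, Schoenflies--Homma plus bookkeeping on the model closure finishes the proof.
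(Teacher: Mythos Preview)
Your proof is correct and follows essentially the same plan as the paper's: restrict to the finitely many lines actually bounding $U$, build an explicit planar model with the same combinatorics, apply the Schoenflies--Homma theorem to carry the model onto $\cF_U$, and then read off the homeomorphism type of the model's closure. The only notable difference is cosmetic: the paper first reorients every $F\in\cF_U$ so that $U$ lies on its left (whence every $F_j$ lies on the left of every $F_i$, trivialising the right/left data), whereas you keep track of the full cyclic order of ends at infinity and realise it by disjoint chords in a disk; both bookkeeping choices feed the same application of the theorem.
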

In other words, the end-compactification of the closure of $U$ is a closed disk.
\begin{proof}
We provide a proof only in the finite case, since the infinite case is not used in the present text.
We may assume that every element of $\cF$ is included in the boundary of $U$. Up to changing the orientation of the $F$'s, we may also assume that $U$ is on the left-hand side of every $F$ in $\cF$. Then every $F_{2}$ in $\cF$ is on the left-hand side of every $F_{1} \neq F_{2}$ in $\cF$. We leave it to the reader to construct an explicit family $\cF'$, having the same combinatorics as $\cF$, and an explicit homeomorphism between the complement in $\bbD^2$ of a finite subset of $\partial \bbD^2$ and the closure of the unique connected component $U'$ whose boundary contains every element of $\cF'$. Now the theorem provides a homeomorphism of the plane that sends $\cF$ to $\cF'$, and in particular that sends the closure of $U$ to the closure of $U'$. We deduce that the closure of $U$ is also homeomorphic to the unit disk minus a finite subset of the boundary.
\end{proof}

\bibliographystyle{alpha}
\bibliography{bibliographie-index}

\end{document}